\newcommand{\R}{\mathbb{R}}
\newcommand{\N}{\mathcal{N}}
\newcommand{\D}{\mathcal{D}}
\newtheorem{Cor}{Corollary}[section]
\theoremstyle{remark}
\newcommand{\cald}{\mathcal{D}_k}
\newcommand{\caln}{\mathcal{N}_k}
\newcommand{\apf}{ f^i_{\mathcal{N}^i_k}}
\gdef\listctr{list\romannumeral\the\@listdepth}\expandafter
\theoremstyle{definition}
\newtheorem{theorem}{Theorem}
\newtheorem{remark}{Remark}
\newtheorem{lemma}{Lemma}
\newtheorem{assumption}{Assumption}
\newtheorem{definition}{Definition}
\title{ASMOP: Additional sampling stochastic trust region  method for multi-objective problems}
\author{Nata\v{s}a Krklec Jerinki\'c \footnote{Department of Mathematics and Informatics, Faculty of Sciences, University of Novi Sad, Trg Dositeja Obradovi\' ca 4, 21000 Novi Sad, Serbia. e-mail: \texttt{natasa.krklec@dmi.uns.ac.rs}},
Luka Rute\v{s}i\'c \footnote{Department of Mathematics and Informatics, Faculty of Sciences, University of Novi Sad, Trg Dositeja Obradovi\' ca 4, 21000 Novi Sad, Serbia. e-mail: \texttt{luka.rutesic@dmi.uns.ac.rs} } \footnote{Corresponding author} 
Ilaria Trombini \footnote{Department of Mathematics and Computer Science, University of Ferrara, Via Machiavelli 30, 44121 Ferrara, Italy.  e-mail: \texttt{ilaria.trombini@unife.it}}
}
\date{}
\begin{document}
\maketitle
\begin{abstract}
 We consider  unconstrained  multi-criteria optimization problems with finite sum objective functions. The proposed algorithm belongs to a non-monotone trust region framework where additional sampling approach is used to govern the sample size and the acceptance of a candidate point. Depending on the problem, the method can yield a mini-batch or an increasing sample size behavior. This work can be viewed as an extension of additional sampling trust region method for scalar finite sum function minimization presented in the literature, requiring nontrivial modifications both in construction and in convergence analysis of the algorithm. Under  assumptions standard for this framework, we  prove stochastic convergence  for twice continuously-differentiable, but possibly non-convex  objective functions. 
 The experiments on machine learning binary classification datasets  show the efficiency of the proposed scheme and its competitiveness with the relevant state-of-the-art methods in both convex and non-convex setup. 
 \end{abstract}
 {\bf{Key words:}} Additional sampling, non-monotone trust region, adaptive sample size, multi-objective optimization, Pareto critical points, stochastic convergence.

 \section{Introduction}
   The problem we are solving can be stated as \begin{equation}\label{mop}
\min_{x\in \mathbb{R}^n} f(x):=(f^1(x),...,f^q(x))^T
\end{equation}
where $f:R^n\rightarrow R^q$ and each component function is assumed to be smooth with Lipschitz-continuous gradients. Let  $\mathcal{N}^i$, $i=1,...,q$ be  the  respective index sets and  $N^i=|\mathcal{N}^i|=N$, for all $i=1,...,q$.  We assume that each component function has the following finite sum form
\begin{equation} \label{fsfun} f^i(x):=\frac{1}{N}\sum_{j\in \mathcal{N}^i}f^i_j(x), \mbox{ }i=1,...,q.\end{equation}
 This setup is often encountered in Machine Learning (ML) and Deep Learning (DL) and it has been  widely researched, yielding  many crafty optimization methods. The considered problems are often nonlinear, nonconvex, and large-scale, which calls for the development of stochastic optimization methods specialized for  Multi-Objective  (MO) problems. In ML terms, each component function $f^i(x)$ can be seen as a distinct average loss function, where $x\in R^n$ is the vector of trainable parameters for input-label pairs $\{(a_j^i,y_j^i)\}_{j=1}^{N}$ of the training dataset, i.e.,   
$$f^i_j(x):= L^i(a_j^i,y_j^i;x), \quad j=1,..,N, i=1,...,q,$$ 
and $L^i(\cdot)$ measures the prediction error. 
For the  method we propose, the stochastic inflow comes from subsampling which is used to reduce the computational costs of evaluating the full  functions \eqref{fsfun} and possibly the relevant derivatives.

The points of interest in multi-criteria problems are Pareto critical points  which cannot be locally improved in terms of all component function values  \cite{FS},\cite{TOMOP}. Finding Pareto  critical points yields possibility of finding  an  entire Pareto front - a set of globally optimal points \cite{CMV}. This is extremely important in some applications since the representation of the entire front can provide crucial  information. Pareto critical points can be characterized as zeros of the so called  marginal function (see \cite{FS} for more details).  In the single-objective  case  ($q=1$), this concept is reduced to the well known first order optimality conditions, i.e., finding a stationary point of the objective function. Therefore, a common approach in the analysis of multi-criteria optimization is to show that (sub)sequence of marginal function values converges to zero. In  stochastic setup, where only  approximate values of the functions and the derivatives are available,  the approximate marginal function  plays a significant role  in the algorithm, while the analysis aims stochastic convergence of the true marginal function \cite{TE},\cite{NNL}.

Both the line search and the trust region approach has been researched in multi-objective optimization, resulting in a number of  deterministic and stochastic algorithms.   Deterministic multi-criteria steepest descent and Newton method, together with a projected gradient method for the constrained case has been discussed in \cite{SOM}. The stochastic multi-gradient, an extension of the classical stochastic gradient (SG) \cite{ROMO}, can be found in \cite{LV}, in which sublinear
convergence for convex and strongly convex functions is shown.

In \cite{VOS}, the marginal function is utilized to define the trust region method, and therein a convergence towards a critical  point is shown. The complexity of multi-objective problems motivates the development of the stochastic and derivative free approaches. In \cite{TE},  a black box function is used and the derivative is unknown. Therein it is shown that by using the true function values and approximate derivatives it is possible to prove  convergence towards a Pareto optimal point. It is also possible to use both approximate function and gradient values if the estimates are sufficiently accurate with high probability (probabilistically fully linear), see \cite{NNL}, which is a generalization of \cite{CH}. Therein, an adaptive subsampling technique is used which depends on the trust region radius. It successfully reduced the computational cost by using less data when the radius is larger.

The literature also provides methods designed for problems with the finite sum objective functions. These methods exploit the structure of the function  and their advantage lies in subsampling techniques. It is shown that subsampling can help in reducing the costs of deterministic schemes where the full sample set is needed at all iterations, yielding excessive optimization costs. Some papers on this topic are \cite{NKSB},\cite{BBN},\cite{BBN2},\cite{BCN},\cite{BHN},\cite{BCHN},\cite{RM}. Moreover, sampling is inevitable in infinite sum problems that are used for modeling online training process, or in more general problems where the objective function is in a form of mathematical expectation (see \cite{KKSP} or \cite{Ilaria} for instance). In \cite{NKNKJ} an additional sampling technique is employed within a non-monotone trust region framework, aiming to solve single-criterion problems. The idea of non-monotonicity withing trust region framework is also present in the literature \cite{MC},\cite{SN},\cite{CBS}. 

In this work, we propose a stochastic non-monotone trust region algorithm for solving multi-objective problems \eqref{mop}-\eqref{fsfun}. At each iteration we employ  subsampled functions  and gradients to find a candidate subsequent point based on approximate trust region model. The acceptance of that point is based on additional sampling technique \cite{NKNKJ},\cite{NLOS},\cite{IJOT}, \cite{SNKN}, which also governs the subsampling strategy. This means that besides subsampling needed to form an approximate model and the candidate point, we also use an independent subsampled functions as a decision making criteria. Roughly speaking, the role of additional sampling is to test the heterogeneity of the data and increase the level of precision if necessary.  By adaptively choosing the sample size for each component, we  handle the sample average approximation error for each objective function separately.  This leads to two different sample size scenarios: 1)   ``mini-batch scenario", where at least one of the objective functions is approximated  during the whole optimization process; and  
2)  ``full sample scenario", where the full sample is reached eventually for all the objective functions. 

In particular,  the presented work generalizes the additional sampling non-monotone trust region approach  for single-objective problems \cite{NKNKJ} to multi-objective setup. This adaptation requires nontrivial modifications both in the algorithm's construction and in the convergence analysis.  The resulting method yields adaptive sample size sequences designed  to reduce costs by exploiting the problem's structure, while ensuring the convergence towards critical points under some common assumptions for this setup, regardless of the sample size scenario.

The paper is organized as follows. Some basic concepts are covered in the following section. Section 3 presents the proposed algorithm. Within Section 4  the stochastic convergence of the proposed method is analyzed, while Section 5 is devoted to numerical results. Some conclusions are drawn in Section 6. 

\section{Preliminaries}
We start this section by defining efficient and weakly efficient solutions of problem \eqref{mop}. 
\begin{definition}\cite{FS}
    A point $ x^* \in  \mathbb{R}^n $ is called (an) efficient (solution) for \eqref{mop} (or Pareto optimal) if there exists no point $ x \in \mathbb{R}^n$  satisfying $ f^i(x) \leq  f^i(x^*) $ for all $ i \in \{1,2,...,q\} $ and $  f(x) \neq  f(x^*). $ 
A point $ x^* \in \mathbb{R}^n $ is called (a) weakly efficient (solution) for \eqref{mop} (or weakly Pareto optimal) if there exists no point $ x \in  \mathbb{R}^n $ satisfying $ f^i(x) < f^i(x^*) $ for all $i \in  \{1,2,...,q\}.$
\end{definition}
\noindent
It can be shown that a stationarity condition for problem \eqref{mop} is related to marginal function  
\begin{equation}
    \label{marginal}
\omega(x)=-\min_{\|d\|\leq 1}\left(\max_{i \in \{1,...,q\}}\langle \nabla f^i(x),d\rangle\right).
\end{equation}
Notice that $\omega(x)=\|\nabla f(x)\|$ if $q=1$ since the solution of problem \eqref{marginal} is $d^{opt}(x)=-\nabla f(x)/\|\nabla f(x)\|$ in that case. In general, marginal function characterizes Pareto critical points as stated in the following lemma. 
\begin{lemma} \cite{FS}
    \label{lmarginal}
    Let $ {\cal D}(x)$ be the set of solutions of \eqref{marginal}. Then
    \begin{itemize}
        \item[a)]$ \omega(x) \geq 0,$  for every  $  x \in \mathbb{R}^n;$
        \item[b)] if $ x $ is Pareto critical for \eqref{mop} then $ 0 \in {\cal D}(x)$ and $ \omega(x)=0;$
        \item[c)] if $  x $ is not Pareto critical of \eqref{mop} then $\omega(x)>0 $ and any $ d \in {\cal D}(x)$ is a descent direction for \eqref{mop};
        \item[d)] the mapping $ x \to \omega(x)$ is continuous.
    \end{itemize}
\end{lemma}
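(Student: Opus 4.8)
The plan is to read off all four items directly from the definition \eqref{marginal}, using only elementary facts: that a continuous function attains its minimum on a compact set (so that the inner minimization is well posed and $\mathcal{D}(x)\neq\emptyset$), the characterization of Pareto critical points recalled just above the lemma, and --- for the continuity claim --- compactness of the closed unit ball together with the continuity of the gradients $\nabla f^i$.

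For item (a), I would note that $d=0$ is feasible for the inner problem and gives it the value $0$; hence $\min_{\|d\|\le1}\max_i\langle\nabla f^i(x),d\rangle\le0$ and therefore $\omega(x)\ge0$. For item (b), if $x$ is Pareto critical then, by the stated characterization, for every $d\in\mathbb{R}^n$ there is an index $i$ with $\langle\nabla f^i(x),d\rangle\ge0$, so $\max_i\langle\nabla f^i(x),d\rangle\ge0$ for all $d$; combining this lower bound with the value $0$ attained at $d=0$ shows that the inner minimum equals $0$ and is attained at $d=0$, i.e.\ $0\in\mathcal{D}(x)$ and $\omega(x)=0$. For item (c), if $x$ is not Pareto critical there exists $\bar d\neq0$ with $\langle\nabla f^i(x),\bar d\rangle<0$ for all $i$; after rescaling we may take $\|\bar d\|\le1$, and then the inner minimum is at most $\max_i\langle\nabla f^i(x),\bar d\rangle<0$, so $\omega(x)>0$. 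Moreover, any $d\in\mathcal{D}(x)$ satisfies $\langle\nabla f^i(x),d\rangle\le\max_j\langle\nabla f^j(x),d\rangle=-\omega(x)<0$ for every $i$, so $d$ strictly decreases each component function to first order, i.e.\ it is a descent direction for \eqref{mop}.

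The real content is item (d), and it is the only step where a nontrivial argument is needed. I would set $\phi(x,d):=\max_{i\in\{1,\dots,q\}}\langle\nabla f^i(x),d\rangle$, which is jointly continuous in $(x,d)$ as a finite maximum of continuous functions (here the continuity of each $\nabla f^i$ enters), and write $-\omega(x)=\min_{\|d\|\le1}\phi(x,d)$. Given $x_k\to x$: the inequality $\limsup_k(-\omega(x_k))\le-\omega(x)$ follows by fixing some $d^*\in\mathcal{D}(x)$ and using $-\omega(x_k)\le\phi(x_k,d^*)\to\phi(x,d^*)=-\omega(x)$; for the reverse inequality, take a subsequence $(x_{k_j})$ along which $-\omega(x_{k_j})$ converges to $\liminf_k(-\omega(x_k))$, pick $d_{k_j}\in\mathcal{D}(x_{k_j})$, extract a further subsequence with $d_{k_j}\to\bar d$ and $\|\bar d\|\le1$ (compactness of the unit ball), and use joint continuity of $\phi$ to get $-\omega(x_{k_j})=\phi(x_{k_j},d_{k_j})\to\phi(x,\bar d)\ge-\omega(x)$. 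Together these give $\omega(x_k)\to\omega(x)$. The main obstacle is precisely this step: the minimizer in \eqref{marginal} need not be unique or depend continuously on $x$, so lower semicontinuity of $\omega$ cannot be obtained by substituting a fixed direction into the limit and instead requires the subsequential compactness argument above (equivalently, one may invoke Berge's maximum theorem with the constant compact-valued feasible correspondence $x\mapsto\{d:\|d\|\le1\}$ and the continuous objective $\phi$).
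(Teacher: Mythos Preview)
Your proof is correct in all four parts. Note, however, that the paper does not actually supply its own proof of this lemma: it is quoted from \cite{FS} and stated without argument, so there is no ``paper's proof'' to compare against beyond the original reference. Your direct verification of (a)--(c) from the definition and the characterization of Pareto criticality is exactly the standard route taken in \cite{FS}, and your treatment of (d) via the joint continuity of $\phi(x,d)=\max_i\langle\nabla f^i(x),d\rangle$ together with compactness of the unit ball (equivalently, Berge's maximum theorem with a constant compact feasible set) is likewise the canonical argument. Nothing is missing.
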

\noindent 
One possible scalar  representation of the multi-objective problem  \eqref{mop}  is
\begin{equation} \label{scalarrep} \min_{x \in  \mathbb{R}^n}  \phi(x), \; \quad  \phi(x):=\max_{i\in\{1,...,q\}}f^i(x).\end{equation} 
Although problems \eqref{mop} and \eqref{scalarrep} are not equivalent, it can be shown that every  solution  problem \eqref{scalarrep} is a Pareto  critical point of problem \eqref{mop}. 

At each iteration $k$  we form a sample average approximations  of functions \eqref{fsfun} and their derivatives as follows 
\begin{equation}
\label{fcal}
    f^i_{\mathcal{N}^i_k}(x)=\frac{1}{N_k^i}\sum_{j\in\mathcal{N}_k^i}f^i_j(x),\quad  \nabla f^i_{\mathcal{N}^i_k}(x)=\frac{1}{N_k^i}\sum_{j\in\mathcal{N}_k^i}\nabla f^i_j(x),
\end{equation}
where $\mathcal{N}_k^i\subseteq\mathcal{N}^i$ and $N_k^i=|\mathcal{N}_k^i|$. Moreover, we consider the approximate marginal functions \cite{TE}
\begin{equation}
    \label{amarginal}
    \omega_{\caln}(x)=-\min_{\|d\|{\leq}1}\left(\max_{i \in \{1,...,q\}}\langle \nabla\apf(x),d\rangle\right)
    \end{equation}
where $\caln=(\caln^1,...,\caln^q)\subseteq\mathcal{N}=(\mathcal{N}^1,...,\mathcal{N}^q)$ is the set $q$-tuple.  The corresponding scalar problem is then given by 
    \begin{equation} \label{ascalar} 
    \min_{x \in \mathbb{R}^n} \phi_{\caln}(x), \; \quad  \phi_{\caln}(x):=\max_{i\in\{1,...,q\}}\apf(x). 
    \end{equation}
Following the ideas from deterministic setup, we form a quadratic model of 
\eqref{ascalar} as 
\begin{equation}
\label{model} 
m_{\mathcal{N}_k}(d)=\max_{i\in\{1,...,q\}} m_{\caln^i}(d), 
\end{equation}
$$
    m_{\caln^i}(d):=\apf(x_k)+\langle \nabla\apf(x_k),d\rangle+\frac{1}{2}\langle d,H^i_k d\rangle,
$$
where $H^i_k$ approximates the Hessian of function $f^i$ for $i=1,...,q$ at iteration $k$.
Notice that  for each $i=1,...,q$,  $\nabla m_{\caln^i}(0)=\nabla\apf(x_k)$, and $m_{\caln^i}(0)=\apf(x_k)$.

Furthermore, following the approach of  \cite{NKNKJ}, we will rely on additional sampling and draw independently sampled subsets to govern sampling strategy of the method. This yields  a nondecreasing, adaptive  subsampling approach constructed in a such way to avoid employing  the entire sample set for  functions $f^i$ which are homogeneous, i.e., whose subsampled values do not noticeably differ from the full sample values. On the other hand, if the local functions of an objective $f^i$ are heterogeneous, mini-batch does not provide a good representative and the sample size is increased gradually.  As mentioned, this will create  two possible scenarios:  ``mini-batch" (MB) and  ``full sample" (FS). Recall that in  the MB scenario, there exists at least one component function $f^i$ for which the subsampling size $N_k^i$ is strictly less than the full sample size $N$ throughout the algorithm. On the other hand, FS does not imply fully deterministic approach, but rather a non-monotonically  increasing sample mode where the full sample is reached eventually.  More formally, let  us define 
 \begin{equation}
 \label{mb}
     M_b:=\{i\in\{1,...,q\}\; |\; N_k^i<N,\forall k\in \mathbb{N}\}. 
 \end{equation}
 Thus,  $M_b\neq\emptyset$ implies MB scenario, while $M_b=\emptyset$ implies FS scenario. Notice that scenario depends on a sample path - the outcome of the algorithm influenced by random sampling.  Denoting the set of all possible sample paths of the algorithm by $\Omega$, we have  $\Omega=MB\cup FS$ and $MB \cap FS = \emptyset$, where $MB \subseteq \Omega$ represents all possible sample paths such that $M_b\neq \emptyset$, while $FS \subseteq \Omega$ represents all possible sample paths such that $M_b= \emptyset$.

 \section{Algorithm}
Within this section we present  Additional Sampling algorithm  for Multi-Objective Problems - ASMOP. As mentioned earlier, at each iteration $k$, a quadratic model $m_{\caln}(d)$ is formed using the subsampled values \eqref{fcal} and Hessian approximations $H^i_k$. Then, the direction is obtained by solving the following problem approximately
 \begin{equation}
 \label{subp}
     \min_{\|d\|\leq\delta_k}m_{\caln}(d),
 \end{equation} 
 where $\delta_k$ is the trust region radius. The approximate solution $d_k$ is such that  the Cauchy decrease condition holds
 \begin{equation}
 \label{cauchy}
     m_{\caln}(0)-m_{\caln}(d_k)\geq\frac{1}{2}\omega_{\caln}(x_k)\min\{\delta_k,\frac{\omega_{\caln}(x_k)}{\beta_k}\}
 \end{equation}
 with  \begin{equation} \label{betak} \beta_k=1+\max_{i\in\{1,...,q\}}\|H^i_k\|.
 \end{equation} 
 It can be shown that such direction exists (see Lemma 2 of \cite{NNL} for instance). 
 Similar to \cite{NKNKJ}, we will define a trial point $x_t=x_k+d_k$, and check the ratio between the decrease of the relevant scalar function and the quadratic model. Since we are dealing with noisy approximations in general, we adopt non-monotone trust region strategy to avoid imposing a strict decrease   
 and define the ratio as follows
 \begin{equation}
 \label{ronk}
     \rho_{\caln}:=\frac{\phi_{\caln}(x_t)-\phi_{\caln}(x_k)-\delta_kt_k}{m_{\caln}(d_k)-m_{\caln}(0)}
 \end{equation}
 where  $t_k>0$ for all $k$ and 
 \begin{equation}
 \label{tk}
     \sum_{k=0}^{\infty}t_k\leq t<\infty.
 \end{equation}
 The role of  $t_k$ is to control a potential  increase of the scalar function and it is assumed to be  predetermined.
 
Furthermore, let us define the set of indices at iteration $k$ for which we have an approximate function values, i.e.,  
 \begin{equation}
 \label{mbk}
     M^k_b:=\{i\in\{1,...,q\}\; |\; N_k^i<N\}. 
 \end{equation}
 If $M_b=\emptyset$ then we say that  the algorithm is in the FS (full sample) phase at iteration $k$. On the other hand, we say that  the algorithm is in the MB (mini-batch) phase at iteration $k$ if  $M_b\neq \emptyset$. 
If we are in the MB phase, the independent  additional sampling is performed. More precisely, we sample   $\cald=(\cald^1,...,\cald^q)$, independently of $\N_k$, with $\cald^i\subset\mathcal{N}^i$ such that $D_k^i=|\cald^i|<N$ for all $i=1,...,q$. The subsample $\D_k$ is used to  calculate $f^i_{\cald^i}(x_k)$, $f^i_{\cald^i}(x_t)$ and $\nabla f^i_{\cald^i}(x_k)$ by the following formulas 
 \begin{equation}
\label{fdkcal}
    f^i_{\mathcal{D}^i_k}(x)=\frac{1}{D_k^i}\sum_{j\in\mathcal{D}_k^i}f^i_j(x),\mbox{ } \nabla f^i_{\mathcal{D}^i_k}(x)=\frac{1}{D_k^i}\sum_{j\in\mathcal{D}_k^i}\nabla f^i_j(x), \quad i=1,...,q.
\end{equation}
Keep in mind that it is possible that some component functions reached the full sample, while the others did not. 
For each $i\notin M_b^k$ we set $\cald^i=\mathcal{N}^i$, i.e., we have $f_{\D_k^i}(x)=f_{\N_k^i}(x)=f_{\N^i}(x)$.  For all the other components $i\in M_b^k$ it is possible to use subsamples $\D_k^i$ of arbitrary small sizes. In our experiments we set $D_k^i=2$, which keeps the computational cost of additional sampling low.

The following ratio acts as an additional measure of adequacy of the trial point
 \begin{equation} \label{rodk}
     \rho_{\cald}:=\frac{\phi_{\cald}(x_t)-\phi_{\cald}(x_k)-\delta_k\overline{t}_k}{-\max_{i \in \{1,...,q\}}\|\nabla f^i_{\cald^i}(x_k)\|}
 \end{equation}
 where $\phi_{\D_k}(x):=\max_{i\in\{1,...,q\}} f^i_{\D_k}(x)$,  $\overline{t}_k>0$ and 
 \begin{equation}
     \label{tkn}     \sum_{k=0}^{\infty}\overline{t}_k\leq\overline{t}<\infty.
 \end{equation}
 Notice that $\rho_{\cald}\geq\nu$  is equivalent to  an Armijo-like condition
 \begin{equation} \label{ArmDk}
     \phi_{\cald}(x_t)\leq \phi_{\cald}(x_k)+\delta_k\overline{t}_k-\nu\max_{i \in \{1,...,q\}}\|\nabla f^i_{\cald^i}(x_k)\|.
 \end{equation} In the MB phase, the trial point is accepted if both $\rho_{\caln}\geq \eta $ and $\rho_{\cald}\geq \nu$. Notice that the second condition may be very strong since the maximum of the approximate gradients appears on the right-hand side of \eqref{ArmDk}. This is balanced with the choice of predetermined non-monotonicity parameter sequence  $\{\bar{t}_k\}$.  The ratio $\rho_{\cald}$  also plays a significant role  in subsampling strategy as will be seen  within the algorithm. Therefore, the choice  of the non-monotonicity parameter may have a big impact  on   the algorithm's behavior as will be demonstrated within Section 5. If ASMOP reaches the FS phase, only $\rho_{\caln}=\rho_{\mathcal{N}}$ is considered as in the deterministic version of the multi objective trust region \cite{VOS}.

 
 One more mechanism is used to control the sample size behavior. Namely, if we get relatively close to Pareto critical point on an approximate problem, more precisely, if $\omega_{\caln}(x_k)< \varepsilon h_k^i$ where 
 \begin{equation}
 \label{hk}
     h^i_k:=\frac{N-N_k^i}{N}
 \end{equation} 
 is a sample  average approximation error estimate, we  increase the subsampling size. The algorithm can be stated as follows.

\noindent {\bf{Algorithm 1.}} \textit{(ASMOP)}
\begin{itemize}
\item[Step 0.]  {\it{Initialization.}} \\Choose  $x_0 \in \R^n,\delta_{max}>0, \delta_0\in(0,\delta_{max})$, $\gamma_1\in(0,1),\gamma_2=1/\gamma_1,\nu, \varepsilon >0,\eta\in(0,\frac{3}{4}),$  $\{t_k\}$ satisfying \eqref{tk} and $\{\overline{t}_k\}$ satisfying \eqref{tkn}, $\mathcal{N}_0=(\mathcal{N}_0^1,...,\mathcal{N}_0^q)$.  Set $k=0$.  
\item[Step 1.]  {\it{Candidate point.}} \\Form the model \eqref{model} and find the step  
such that \eqref{cauchy} holds. \\Calculate $\rho_{\caln}$ by \eqref{ronk} and $\omega_{\caln}(x_k)$ by \eqref{amarginal}.\\ Set $x_t=x_k+d_k$.
\item[Step 2.]  {\it{Sample update.}}
\begin{algorithmic}
    \If{$M^k_b=\emptyset$ ({\bf{FS phase)}}}
    \State{Go to Step 3.} 
    \Else{ {\bf{(MB phase)}}} \State{For all $i \in M^k_b$:\\ Choose $\cald^i$ randomly and uniformly, with replacement, from $\mathcal{N}^i$ and calculate     $\rho_{\cald}$ by \eqref{rodk}. }

\If {$\omega_{\caln}(x_k)<\varepsilon h_k^i$} 
    \State{Choose  $N_{k+1}^i \in (N_{k}^i, N]$  and choose $\mathcal{N}_{k+1}^i$.}
\Else
    \If{$\rho_{\cald}<\nu$}
        \State{Choose  $N_{k+1}^i \in (N_{k}^i, N]$  and choose $\mathcal{N}_{k+1}^i$.}
    \Else
        \If{$\rho_{\caln}<\eta$}
            \State{Set $N^i_{k+1}=N^i_k$ and $\mathcal{N}_{k+1}^i=\caln^i$.}
        \Else 
        \State{Set $N^i_{k+1}=N^i_k$, and choose $\mathcal{N}_{k+1}^i$.}
        \EndIf{}
    \EndIf{}
\EndIf{}

 \EndIf{}
\end{algorithmic}

\item[Step 3.] {\it{Iterate update.}} 
\begin{algorithmic}
\If {$M^k_b\neq \emptyset$ {\bf{(MB phase)}}}
    \If{$\rho_{\caln}\geq\eta$ and  $\rho_{\cald}\geq\nu$}
        \State{$x_{k+1}=x_t$.}
    \Else
        \State{$x_{k+1}=x_k$.}
    \EndIf{}    
\Else{ {\bf{(FS phase)}}} 
    \If{$\rho_{\caln}\geq\eta$}
        \State{$x_{k+1}=x_t$.}
    \Else
        \State{$x_{k+1}=x_k$.}
    \EndIf
\EndIf
\end{algorithmic}
\item[Step 4.]  {\it{Radius update.}} 
\begin{algorithmic}
    \If{$\rho_{\caln}\geq\eta$}
        \State{$\delta_{k+1}=\min\{\delta_{max},\gamma_2\delta_k\}$.}
    \Else                   
    \State{$\delta_{k+1}=\gamma_1\delta_k$.}
    \EndIf
\end{algorithmic}
\item[Step 5.]  {\it{Counter update.}}  \\ Set $k=k+1$ and go to Step 1.
\end{itemize}

A few more words are due to algorithm's construction. The initialization includes setting the hyper-parameters, including the trust region radius bound $\delta_{max}$ and the initial sample size $\N_0$. Step 1 finds a candidate point through finding a suitable direction $d_k$ based on the chosen sample $\N_k$. We also calculate the corresponding approximate marginal function. 

Notice that Step 2 is relevant only if the MB phase is active and it is devoted to the sample $\N_k$ update. Besides determining the sample size $N_{k+1}$ to be used in the next iteration, we also determine whether the same sample should be retained. The additional sample $\D_k$ is drawn according to the stated rules to ensure theoretical requirements.  Recall that the sample size of  $D_k^i$ can be arbitrarily small for all $i$ and $k$.
The independence from $\N_k$ is crucial since we need an independent  ``second opinion" on the trial point. 

The sample size can be increased due to $\omega_{\caln}(x_k)<\varepsilon h_k^i$ or $\rho_{\cald}<\nu$. The later case indicates that the trial point obtained via $\N_k$ is a bad choice with respect to  independent representative $\phi_{\D_k}$ of the original scalar representation $\phi$. This signalizes potential heterogeneity of the data and requirement for a better approximation by employing a larger subsample.  If both $\omega_{\caln}(x_k)\geq \varepsilon h_k^i$ and  $\rho_{\cald}\geq \nu$ hold, then we check the agreement $\rho_{\N_k}$. If $\rho_{\N_k}< \eta$ then we suspect that the model $m_{\N_k}$ was not good enough approximation of $\phi_{\N_k}$ and that the failure was due to inadequate radius rather than inadequate sample. In that case, we keep the same sample size, but also retain the same sample  for the next iteration. Finally, if the agreement is good enough, i.e., $\rho_{\N_k}\geq  \eta$, we proceed with the same sample size, but with a different sample. Notice that the sample size sequence is nondecreasing. Therefore, if the algorithm enters the FS phase, it remains in it until the end. 

Step 3 is devoted to the acceptance of the trial point. In the MB phase, both measures of agreement $\rho_{\N_k}$ and  $\rho_{\cald}$ need to be big enough in order to accept the trial point, while in the FS phase the decision is reduced to $\rho_{\N_k}$. The trust region radius is updated within Step 4 where the decision is based solely on $\rho_{\N_k}$ regardless of the phase of the algorithm.

\section{Convergence analysis} 
Within this section we prove almost sure convergence of a subsequence of marginal functions $\omega (x_k)$, which is an equivalent of vanishing subsequence of gradients in the scalar case ($q=1$).  We start the analysis by imposing some  standard assumptions for the considered framework.

\begin{assumption} \label{A1}
All the functions  $f_j^i$, $j\in \mathcal{N}^i$,  $i=1,...,q$ are twice continuously differentiable and bounded from bellow.
\end{assumption}
\noindent Notice that this assumption implies that all the subsampled functions $f^i_{\caln^i}, i=1,...,q$ are twice continuously differentiable and bounded from bellow as well. 
\begin{assumption} \label{A2}
There exists a positive constant $c_{h}$ such that $$\|\nabla^2 f^i_{j}(x)\|\leq c_h \quad \mbox{for all} \quad  x\in \R^n, \; j\in \mathcal{N}^i, \; i=1,...,q,$$ and the sequence of $\beta_k$ defined by  \eqref{betak} is uniformly bounded, i.e., there exists a positive constant $c_{b}$ such that 
$$\beta_k = 1+\max_i\|H_k^i\| \leq c_b  \quad \mbox{for all} \quad k \in \mathbb{N}.$$ 
\end{assumption}
\noindent This assumption implies that all the   Hessians $\|\nabla^2 f^i_{\caln^i}(x)\|$  are uniformly bounded as well with the same constant $c_h$. Although restrictive, this assumption is satisfied in some important classes of ML problems such as logistic regression and linear least squares. 

Similarly to Proposition 5.1 in \cite{VOS} we prove that the error of the approximate model $m_{\caln}$ can be  controlled by the trust region radius. 

\begin{lemma}
\label{fimk}
    Suppose that  Assumptions \ref{A1}-\ref{A2} hold. Then there exists a positive constant $c_f >0$ such that for each $k \in \mathbb{N}$ the step $d_k$ of ASMOP satisfies
    $$|\phi_{\caln}(x_k+d_k)-m_{\caln}(d_k)|\leq c_f\delta_k^2.$$
\end{lemma}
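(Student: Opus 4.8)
The plan is to estimate the difference pointwise over the index $i$ that achieves the maximum. Since both $\phi_{\caln}$ and $m_{\caln}$ are maxima over $i=1,\dots,q$, I would first use the elementary inequality $|\max_i a_i - \max_i b_i| \le \max_i |a_i - b_i|$ to reduce the claim to bounding $|f^i_{\caln^i}(x_k+d_k) - m_{\caln^i}(d_k)|$ uniformly in $i$. This is the standard trick for handling the nonsmooth max-structure and it is exactly the step that parallels Proposition 5.1 of \cite{VOS}.

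Next, for a fixed $i$, I would apply Taylor's theorem with integral (or Lagrange) remainder to the twice continuously differentiable function $f^i_{\caln^i}$ around $x_k$:
\[
f^i_{\caln^i}(x_k+d_k) = f^i_{\caln^i}(x_k) + \langle \nabla f^i_{\caln^i}(x_k), d_k\rangle + \tfrac{1}{2}\langle d_k, \nabla^2 f^i_{\caln^i}(\xi^i_k) d_k\rangle
\]
for some $\xi^i_k$ on the segment between $x_k$ and $x_k+d_k$. Subtracting the model $m_{\caln^i}(d_k) = f^i_{\caln^i}(x_k) + \langle \nabla f^i_{\caln^i}(x_k), d_k\rangle + \tfrac{1}{2}\langle d_k, H^i_k d_k\rangle$, the first-order terms cancel and what remains is $\tfrac{1}{2}\langle d_k, (\nabla^2 f^i_{\caln^i}(\xi^i_k) - H^i_k) d_k\rangle$, whose absolute value is at most $\tfrac{1}{2}(\|\nabla^2 f^i_{\caln^i}(\xi^i_k)\| + \|H^i_k\|)\|d_k\|^2$ by Cauchy--Schwarz and the triangle inequality.

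To finish, I would invoke Assumption \ref{A2}: the remark following it states that $\|\nabla^2 f^i_{\caln^i}(x)\| \le c_h$ for all $x$, and $\|H^i_k\| \le \beta_k \le c_b$ for all $k$, so the bracketed quantity is bounded by $c_h + c_b$. Since the step satisfies $\|d_k\| \le \delta_k$ (the trial step solves the trust-region subproblem \eqref{subp}), we obtain $|f^i_{\caln^i}(x_k+d_k) - m_{\caln^i}(d_k)| \le \tfrac{1}{2}(c_h+c_b)\delta_k^2$, uniformly in $i$ and $k$. Setting $c_f := \tfrac{1}{2}(c_h + c_b)$ and combining with the max-reduction step gives the claim.

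I do not anticipate a genuine obstacle here; the argument is routine. The only point requiring a little care is the max-reduction inequality and making sure the index achieving each max can differ between $\phi_{\caln}$ and $m_{\caln}$ — but the elementary bound $|\max_i a_i - \max_i b_i| \le \max_i|a_i-b_i|$ handles exactly that, so no case analysis on the active index is needed. One should also note that $H^i_k$ need not be the true Hessian, which is why the bound involves $c_h + c_b$ rather than, say, a Lipschitz-of-gradient constant; this is consistent with the second-order trust-region setting described before the lemma.
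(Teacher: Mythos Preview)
Your proposal is correct and follows essentially the same route as the paper: Taylor-expand each $f^i_{\caln^i}$ around $x_k$, cancel the zeroth- and first-order terms against the model, and bound the remaining quadratic term via Assumption~\ref{A2} and $\|d_k\|\le\delta_k$, arriving at the same constant $c_f=\tfrac{1}{2}(c_h+c_b)$. The only cosmetic difference is that the paper establishes the two inequalities $\phi_{\caln}(x_k+d_k)-m_{\caln}(d_k)\le c_f\delta_k^2$ and $m_{\caln}(d_k)-\phi_{\caln}(x_k+d_k)\le c_f\delta_k^2$ separately via subadditivity of the max, whereas you package both directions at once through the inequality $|\max_i a_i-\max_i b_i|\le\max_i|a_i-b_i|$; this is a mild streamlining but not a different argument.
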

\begin{proof}
Due to Assumption \ref{A1} and Taylor's expansion, for each $i$  there exists $\alpha^k_i\in(0,1)$ such that the following equality holds
\begin{equation} \label{pom1} f^i_{\caln^i}(x_k+d_k)=f^i_{\caln^i}(x_k)+\langle\nabla f^i_{\caln^i}(x_k),d_k\rangle+\frac{1}{2}\langle d_k,\nabla^2f^i_{\caln^i}(x_k+\alpha^k_id_k)d_k\rangle.\end{equation}
Furthermore, there holds
\begin{eqnarray} |\phi_{\caln}(x_k+d_k)-m_{\caln}(d_k)|\nonumber& =& |\max_{i\in\{1,...,q\}}\apf(x_k+d_k)-\max_{i\in\{1,...,q\}} m_{\caln^i}(d_k)|\\ \nonumber &\leq& \max_{i \in \{1,..,q\}}|\apf(x_k+d_k)-m_{\caln^i}(d_k)|\\ 
&\leq & \max_{i \in \{1,..,q\}}(|\frac{1}{2}\langle d_k,\nabla^2f^i_{\caln^i}(x_k+\alpha^k_id_k)d_k\rangle|\\\nonumber&+&|\frac{1}{2}\langle d_k,H_k^i d_k\rangle|)\\\nonumber 
&\leq & \max_{i \in \{1,..,q\}}(\frac{1}{2}\|d_k\|^2 (c_h+c_b)) \leq \frac{1}{2} (c_h+c_b) \delta_k^2 
\end{eqnarray}
and the result holds with $c_f:=\frac{1}{2} (c_h+c_b) $. 
\end{proof}
\begin{remark}
    Notice that if $M_b^k = \emptyset$, i.e., if the algorithm is in the FS phase,  Lemma \ref{fimk} implies  
    \begin{equation}
        |\phi(x_k+d_k)-m_k(d_k)|\leq c_f\delta_k^2.
    \end{equation}
\end{remark}
The further analysis is conducted by observing the two possible outcomes of the algorithm with respect to sample size behavior (MB and FS) separately. However, at the end we combine all the possible outcomes to form the final result stated in Theorem  \ref{glavna}. In order to do that, we follow the similar path as in other papers dealing with the considered additional sampling approach. Let us denote by $\mathcal{D}_k^+$ the subset of all possible outcomes of $\mathcal{D}_k$ such that $\rho_{\cald}\geq\nu$, i.e.,
\begin{equation}
    \mathcal{D}_k^+=\{\mathcal{D}_k\subset\mathcal{N}\;| \; \phi_{\cald}(x_t)\leq \phi_{\cald}(x_k)+\delta_k\overline{t}_k-\nu\max_{i\in \{1,..,q\}}\|\nabla f^i_{\cald^i}(x_k)\|\}.
\end{equation}
If $\mathcal{D}_k\in\mathcal{D}_k^+$ and $\rho_{\caln}\geq\eta$ then $x_{k+1}=x_t$, otherwise we have  
$x_{k+1}=x_k$. Similarly, we denote the set of outcomes where an increase occurs as
\begin{equation}
    \mathcal{D}_k^-=\{\mathcal{D}_k\subset\mathcal{N}\;|\;  \phi_{\cald}(x_t)> \phi_{\cald}(x_k)+\delta_k\overline{t}_k-\nu\max_{i\in \{1,..,q\}}\|\nabla f^i_{\cald^i}(x_k)\|\}.
\end{equation}
Notice that if  $\mathcal{D}_k\in\mathcal{D}_k^-$, then $x_{k+1}=x_k$. 
Now, we proceed  by observing the MB scenario first. 
The following lemma states that, within this scenario, we have  $\rho_{\cald}\geq\nu$ for all $k$ large enough. 
\begin{lemma}
\label{lrodk}
    If $M_b\neq\emptyset$ then  there exists random, finite iteration $k_0\in \mathbb{N}$ such that $\mathcal{D}_k^{-} = \emptyset$ for all $k\geq k_0$. 
\end{lemma}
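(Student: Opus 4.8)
The plan is to argue by contradiction, assuming that $\mathcal{D}_k^- \neq \emptyset$ for infinitely many $k$, and to derive a contradiction with Assumption~\ref{A1} (boundedness from below of the objective functions). The key observation is that every outcome in $\mathcal{D}_k^-$ forces $x_{k+1} = x_k$, so the iterate stays put, while the additional-sampling estimate $\phi_{\cald}$ records an increase that is \emph{not} fully compensated by the controlled-increase term $\delta_k \overline{t}_k$: on $\mathcal{D}_k^-$ we have $\phi_{\cald}(x_t) > \phi_{\cald}(x_k) + \delta_k \overline{t}_k - \nu \max_i \|\nabla f^i_{\cald^i}(x_k)\|$. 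The subtlety is that this inequality is stated for the trial point $x_t$, not the next iterate; I would first handle the bookkeeping that relates these quantities across iterations, using the fact that in the MB scenario the sample size of at least one component stays bounded by $N$, so the relevant quantities remain on a fixed finite family of subsampled functions.

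First I would set up the following. Since $M_b \neq \emptyset$, fix $i \in M_b$, so $N_k^i < N$ for all $k$. I would then track a telescoping-type sum of increments of $\phi_{\cald}$ (or of the individual $f^i_{\cald^i}$) along the subsequence of iterations where $\mathcal{D}_k \in \mathcal{D}_k^-$. On such iterations, the increase in $\phi_{\cald}(x_t)$ over $\phi_{\cald}(x_k)$ exceeds $\delta_k \overline{t}_k - \nu \max_i \|\nabla f^i_{\cald^i}(x_k)\|$; since $\overline{t}_k > 0$ and $\sum_k \overline{t}_k \le \overline{t} < \infty$, the cumulative contribution of the $\delta_k \overline{t}_k$ terms is finite (using $\delta_k \le \delta_{\max}$), so it cannot save the day. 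The term $-\nu \max_i \|\nabla f^i_{\cald^i}(x_k)\|$ is nonpositive, which is the "good" direction — it means that whenever $\mathcal{D}_k \in \mathcal{D}_k^-$, the increase of $\phi_{\cald}$ is at least some genuinely positive amount only if the gradient estimate is small; otherwise the Armijo-type violation is even stronger. I expect to need a probabilistic/measure-theoretic argument here rather than a pathwise one: because $\cald$ is sampled independently and uniformly, the \emph{conditional expectation} of $\phi_{\cald}(x_k) - \phi_{\cald}(x_t)$ given the current iterate and $\mathcal{N}_k$ behaves like a population quantity, and one shows that $\Pr(\mathcal{D}_k \in \mathcal{D}_k^-)$ being bounded away from zero infinitely often would force $\E[\phi_{\cald}]$ — hence a lower-bounded quantity — to drift to $-\infty$, or would contradict summability of $\overline{t}_k$. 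This is the step I expect to be the main obstacle: making precise, via a supermartingale or Borel–Cantelli argument, that "$\mathcal{D}_k^- \neq \emptyset$ infinitely often" cannot coexist with the finiteness of $\sum \overline{t}_k$ and the lower bound on the $f^i$, given that the iterate is frozen on those steps.

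Concretely, I would proceed as follows. (1) Show that on any iteration $k$ with $\mathcal{D}_k \in \mathcal{D}_k^-$ we have $x_{k+1}=x_k$, and relate $\phi_{\cald}(x_k)$ and $\phi_{\cald}(x_t)$ to the value of the true component functions using conditional unbiasedness of $f^i_{\cald^i}$. (2) Observe that $\phi_{\cald}(x_t) - \phi_{\cald}(x_k) > -\delta_k \overline{t}_k$ is impossible to sustain in expectation: taking conditional expectations, $\E[\phi_{\cald}(x_t) \mid \mathcal{F}_k] = \phi(x_t)$ up to the max-of-averages subtlety (which one controls by Jensen/the structure of \eqref{ascalar}), while the frozen iterate means $x_t$ is repeatedly probed. (3) Define the random variable counting $\mathbf{1}\{\mathcal{D}_k \in \mathcal{D}_k^-\}$ and argue, via a conditional Borel–Cantelli lemma, that if this event has conditional probability bounded below infinitely often, then $\phi$ must decrease unboundedly along a subsequence — contradicting Assumption~\ref{A1}. (4) Conclude that with probability one there is a finite (random) $k_0$ with $\mathcal{D}_k^- = \emptyset$ for all $k \ge k_0$. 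The routine parts — the telescoping of $\delta_k\overline{t}_k$, the uniform bound $\delta_k \le \delta_{\max}$, and the elementary properties of the max function — I would state without belaboring; the crux is the probabilistic argument in steps (2)–(3), which is where the independence of the additional sample $\cald$ from $\mathcal{N}_k$ is essential.
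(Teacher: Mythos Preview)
Your approach has a genuine gap: you are trying to contradict Assumption~\ref{A1} (boundedness from below), but the inequality defining $\cald^-$ gives no leverage in that direction. On an iteration with $\cald \in \cald^-$ we have $\phi_{\cald}(x_t) > \phi_{\cald}(x_k) + \delta_k\overline{t}_k - \nu\max_i\|\nabla f^i_{\cald^i}(x_k)\|$, i.e., the trial point has a \emph{large} value of $\phi_{\cald}$, and moreover $x_{k+1}=x_k$ so the iterate does not move. There is no telescoping decrease of any lower-bounded quantity to extract here; your step~(3) --- that positive conditional probability of $\cald\in\cald^-$ infinitely often forces $\phi$ to $-\infty$ --- simply does not follow from anything you have set up. The conditional-expectation bookkeeping in your step~(2) is also off: $\E[\phi_{\cald}(x)\mid\mathcal F_k]$ is \emph{not} $\phi(x)$ (max of averages versus average of maxes), and even if it were, a frozen iterate cannot drive $\phi$ anywhere.

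The paper's argument is much shorter and contradicts a different hypothesis: the MB assumption itself. Since the sample sizes $N_k^i$ are nondecreasing integers bounded by $N$, they stabilize at some $\overline{N}^i<N$ after a finite iteration $k_0$. But Step~2 of the algorithm \emph{increases} $N_{k+1}^i$ whenever the realized $\cald$ satisfies $\rho_{\cald}<\nu$, i.e., whenever $\cald\in\cald^-$. Hence for $k\ge k_0$ the realized sample can never fall in $\cald^-$. Now $\cald$ is drawn uniformly from a finite set of $q$-tuples, so if $\cald^-\neq\emptyset$ then $P(\cald\in\cald^-)\ge\tilde p$ for a fixed $\tilde p>0$ independent of $k$; if this held for infinitely many $k\in K$, independence gives $P(\cald\in\cald^+\text{ for all }k\in K)\le\prod_{k\in K}(1-\tilde p)=0$, so almost surely some realized $\cald$ lands in $\cald^-$ past $k_0$, forcing a sample-size increase --- contradiction. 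The key idea you are missing is precisely this link between the event $\rho_{\cald}<\nu$ and the sample-size update rule in Step~2.
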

\begin{proof}
    Since the sample sizes are nondecreasing, for each  $i\in M_b$  there exist a corresponding $\overline{N}^i<N$ and $k_0^i\in \mathbb{N} $ such that $N_k^i=\overline{N}^i$ for all $k\geq k_0^i$. Without loss of generality, let us assume that for all $j \notin M_b$ there holds $N^j_k=N$ for all $k\geq k_0:=\max_{i\in \{1,..,q\}} k_0^i$, i.e.,  for all $k\geq k_0$ the subsample sizes reached their upper limit.
    
    Let us assume the  contrary, that there exist an infinite subsequence of iterations $K\subset \mathbb{N}$ such that $\mathcal{D}_k^{-} \neq  \emptyset$ for all $k\in K$. That means that for all $k\in K$ there exists at least one possible choice of $\cald$ such that $\rho_{\cald}<\nu$.  Without loss of generality, assume that for all $ k\in K$ we have that $k\geq k_0$. 
    Since $D_k^i\leq N-1$, there are finitely many combinations with repetitions for $\cald^i$ and thus there are finitely many choices for $q$-tuples $\cald$\footnote{More precisely, the number of possible choices for $\cald$ is $S(D^i_k)\leq \bar{S}^i:=(2N-2)!/((N-1)!)^2$ where the upper bound follows from the combinatorics of unordered sampling with replacement. Thus, the number of choices for $q$-tuples $\cald$ is also finite and bounded by $\bar{S}=\prod_{i\in M_b} \bar{S}^i.$ }.  
    Therefore there exists  $\tilde{p}\in(0,1)$ such that 
    $P(\cald\in\cald^-)\geq\tilde{p}$, i.e., $P(\cald\in\cald^+)\leq1-\tilde{p}=p<1.$
    Hence 
    $$P(\cald\in\cald^+,\forall k\in K)\leq\prod_{k\in K}p=0,$$
    i.e., almost surely there  exists $k\geq k_0$, such that $\rho_{\cald}<\nu$. However, according to Step 2 of the algorithm,    the sample size is increased for all  $i \in M_b^k$ in that case, which is a contradiction with the assumption that $N_k^i=\overline{N}^i$ for all $i\in M_b$ and $k\geq k_0$. This completes the proof. 
\end{proof}

The following lemma will help us to show that the marginal function tends to zero in the MB scenario. In order to prove the convergence result, we define  an auxiliary function $\Phi_{fix}$  
\begin{equation}
    \label{Fifix}
    \Phi_{fix}(x):=\frac{1}{N} \sum_{j=1}^{N} \max_{i \in \{1,...,q\}} f^i_{j}(x).
\end{equation}
 The result is as follows. 
\begin{lemma}
\label{lfij}
    Suppose that  Assumptions \ref{A1} holds and $M_b\neq\emptyset$. Then 
    $$\Phi_{fix}(x_t)\leq\Phi_{fix}(x_k)-\nu\omega(x_k)+\delta_k\overline{t}_k$$ holds for all $k\geq k_0$, where $k_0$ is as in Lemma \ref{lrodk}.
\end{lemma}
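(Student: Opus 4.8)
The plan is to reduce the asserted inequality for the vector problem to a family of $N$ pointwise inequalities, one per summand index $j$, and then average them. First I would invoke Lemma~\ref{lrodk}: since $M_b\neq\emptyset$ and $k\geq k_0$, we have $\mathcal{D}_k^{-}=\emptyset$, which is precisely the statement that \emph{every} $q$-tuple $\mathcal{D}_k\subset\mathcal{N}$ belongs to $\mathcal{D}_k^{+}$, i.e.,
\begin{equation*}
\phi_{\cald}(x_t)\leq\phi_{\cald}(x_k)+\delta_k\overline{t}_k-\nu\max_{i\in\{1,\dots,q\}}\|\nabla f^i_{\cald^i}(x_k)\|
\end{equation*}
holds for each such $\mathcal{D}_k$. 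Now fix $j\in\{1,\dots,N\}$ and specialize this to the single-element choice $\mathcal{D}_k^i=\{j\}$ for all $i=1,\dots,q$; then $\phi_{\cald}(x)=\max_i f^i_j(x)$ and $\nabla f^i_{\cald^i}(x)=\nabla f^i_j(x)$, so the display becomes
\begin{equation*}
\max_{i\in\{1,\dots,q\}} f^i_j(x_t)\leq \max_{i\in\{1,\dots,q\}} f^i_j(x_k)+\delta_k\overline{t}_k-\nu\max_{i\in\{1,\dots,q\}}\|\nabla f^i_j(x_k)\|.
\end{equation*}

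Next I would sum these $N$ inequalities over $j$ and divide by $N$. By the definition \eqref{Fifix} of $\Phi_{fix}$, the left-hand side becomes $\Phi_{fix}(x_t)$ and the first term on the right becomes $\Phi_{fix}(x_k)$, leaving
\begin{equation*}
\Phi_{fix}(x_t)\leq \Phi_{fix}(x_k)+\delta_k\overline{t}_k-\frac{\nu}{N}\sum_{j=1}^{N}\max_{i\in\{1,\dots,q\}}\|\nabla f^i_j(x_k)\|.
\end{equation*}
The proof then reduces to the single inequality $\frac1N\sum_{j=1}^{N}\max_i\|\nabla f^i_j(x_k)\|\geq\omega(x_k)$, which I would derive from the dual (min--max) form of the marginal function. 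Let $d^{*}$ be a minimizer in \eqref{marginal} at $x_k$ (it exists because the unit ball is compact and $d\mapsto\max_i\langle\nabla f^i(x_k),d\rangle$ is continuous); then $\omega(x_k)=-\max_i\langle\nabla f^i(x_k),d^{*}\rangle=\min_i\langle\nabla f^i(x_k),-d^{*}\rangle$. Using $\nabla f^i(x_k)=\frac1N\sum_{j}\nabla f^i_j(x_k)$, the Cauchy--Schwarz inequality and $\|d^{*}\|\leq1$, for every $i$ we get $\langle\nabla f^i(x_k),-d^{*}\rangle\leq\frac1N\sum_j\|\nabla f^i_j(x_k)\|\leq\frac1N\sum_j\max_{i'}\|\nabla f^{i'}_j(x_k)\|$; taking the minimum over $i$ yields $\omega(x_k)\leq\frac1N\sum_j\max_i\|\nabla f^i_j(x_k)\|$, and combining this with the previous display gives the claim.

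I expect the argument to be short, with essentially all the substance in the last step, namely the bound $\frac1N\sum_j\max_i\|\nabla f^i_j(x_k)\|\geq\omega(x_k)$: it is the only place where the structure of the marginal function, rather than plain algebra, enters, and Assumption~\ref{A1} is used only so that the component gradients (hence $\omega$) are well defined. The one point that needs a little care is the legitimacy of the single-element specialization $\mathcal{D}_k^i=\{j\}$ for \emph{all} components, including any that may already have reached the full sample in a mixed MB phase. This is supplied by Lemma~\ref{lrodk} exactly as stated: there $\mathcal{D}_k^{-}$ is a subset of the family of all $q$-tuples $\mathcal{D}_k\subset\mathcal{N}$, so its emptiness means the non-monotone inequality holds for every admissible additional sample, single-element tuples included, and no separate treatment of the components outside $M_b$ is needed.
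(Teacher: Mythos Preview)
Your proof is correct and follows essentially the same approach as the paper: invoke Lemma~\ref{lrodk} to obtain the non-monotone inequality for every admissible $\cald$, specialize to the singleton tuples $\cald=(j,\dots,j)$, average over $j$ to produce $\Phi_{fix}$, and then bound $\omega(x_k)$ by $\frac1N\sum_j\max_i\|\nabla f^i_j(x_k)\|$ via the optimal direction $d^*$ and Cauchy--Schwarz. The only cosmetic difference is that the paper first derives $\omega(x_k)\le\|\nabla f^i(x_k)\|$ for each $i$ and then passes to the maximum before expanding the finite sum, whereas you expand the finite sum inside the inner product directly; both routes are equivalent.
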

\begin{proof}
   Lemma \ref{lrodk} implies  that we have $\rho_{\cald}\geq\nu,$ i.e.,  
    $$\phi_{\cald}(x_t)\leq \phi_{\cald}(x_k)+\delta_k\overline{t}_k-\nu\max_{i \in \{1,...,q\}}\|\nabla f^i_{\cald^i}(x_k)\|,$$
    for all $k\geq k_0$ and for every possible choice of $\cald$. 
    Since the choice of each  $\cald^i$ is uniform and random with replacements, this further implies\footnote{Additional  sampling is such that it is possible to have $\cald^i=\{j,..,j\}$ which is in fact equivalent of choosing one-element $\cald^i=\{j\}$ due to  sample average approximations.} that the previous inequality also holds for all the single-element choices of $\cald^i$ and all their possible combinations forming $\cald$. 
    Further, observing the combinations of the form  $\cald=(j,...,j)$  for $j=1,...,N$ we obtain 
    $$\max_{i \in \{1,...,q\}} f^i_j(x_t)\leq \max_{i \in \{1,...,q\}} f^i_j(x_k)+\delta_k\overline{t}_k-\nu\max_{i \in \{1,...,q\}}\|\nabla f^i_{j}(x_k)\|.$$
    Now, summing over $j$ and dividing by $N$ we obtain 
    \begin{equation}
        \label{pom5}
    \Phi_{fix} (x_t)\leq \Phi_{fix}(x_k)+\delta_k\overline{t}_k-\nu \frac{1}{N} \sum_{j=1}^{N}\max_{i \in \{1,...,q\}}\|\nabla f^i_{j}(x_k)\|.
    \end{equation}
    Further, let us observe the marginal function and let  us denote by  $d_k^*$  the solution of \eqref{marginal} at iteration $k$, i.e., 
$$\omega(x_k)=-\max_{i\in \{1,..,q\}}\langle\nabla f^i(x_k),d_k^*\rangle.$$
Then for every $i\in \{1,..,q\}$ there holds
$$-\omega(x_k)=\max_{i\in \{1,..,q\}}\langle\nabla f^i(x_k),d_k^*\rangle\geq \langle\nabla f^i(x_k),d_k^*\rangle.$$
Furthermore, by using the Cauchy-Schwartz inequality and the fact that $\|d_k^*\|\leq 1$ we obtain 
$$-\omega(x_k)\geq \langle\nabla f^i(x_k),d_k^*\rangle\geq -\|\nabla f^i(x_k)\|\|d_k^*\|\geq -\|\nabla f^i(x_k)\|.$$
Equivalently, $\omega(x_k)\leq \|\nabla f^i(x_k)\|$ for every $i$ and thus we obtain
\begin{eqnarray}
    \label{pom6} 
    \omega(x_k) &\leq&  \max_{i\in \{1,..,q\}}\|\nabla f^i(x_k)\|=\max_{i\in \{1,..,q\}}\|\frac{1}{N} \sum_{j=1}^{N} \nabla f_j^i(x_k)\|\\\nonumber
    &\leq & \max_{i\in \{1,..,q\}}\frac{1}{N} \sum_{j=1}^{N} \|\nabla f_j^i(x_k)\| \leq \frac{1}{N} \sum_{j=1}^{N} \max_{i\in \{1,..,q\}} \|\nabla f_j^i(x_k)\|.
\end{eqnarray}
Combining this with \eqref{pom5} we obtain the result. 
\end{proof}
The following result states that in the MB case, eventually all the iterates remain within  a random level set of the function $\Phi_{fix}$.  
\begin{Cor}
    \label{lfijk}  Suppose that the assumptions of Lemma \ref{lfij} hold. Then the following holds for all $k \in \mathbb{N}$
     $$ \Phi_{fix}(x_{k_0+k})\leq\Phi_{fix}(x_{k_0})+\delta_{max}\overline{t}.$$
\end{Cor}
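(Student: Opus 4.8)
The plan is to convert the one–step estimate of Lemma \ref{lfij} into a uniform bound by telescoping. First I would note that at every iteration $k$ the iterate update (Step~3) produces either $x_{k+1}=x_t=x_k+d_k$ or $x_{k+1}=x_k$. In the first case Lemma \ref{lfij} applies (for $k\ge k_0$) and, since $\nu>0$ and $\omega(x_k)\ge 0$ by Lemma \ref{lmarginal}, it gives
\[
\Phi_{fix}(x_{k+1})=\Phi_{fix}(x_t)\le \Phi_{fix}(x_k)-\nu\omega(x_k)+\delta_k\overline{t}_k\le \Phi_{fix}(x_k)+\delta_k\overline{t}_k .
\]
In the second case $\Phi_{fix}(x_{k+1})=\Phi_{fix}(x_k)\le \Phi_{fix}(x_k)+\delta_k\overline{t}_k$ trivially, because $\delta_k>0$ and $\overline{t}_k>0$. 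Hence for all $k\ge k_0$ one has the single clean recursion $\Phi_{fix}(x_{k+1})\le \Phi_{fix}(x_k)+\delta_k\overline{t}_k$.

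Next I would iterate this inequality from $k_0$ to $k_0+k-1$ to obtain
\[
\Phi_{fix}(x_{k_0+k})\le \Phi_{fix}(x_{k_0})+\sum_{j=k_0}^{k_0+k-1}\delta_j\overline{t}_j ,
\]
and then bound the radii: the radius update (Step~4) together with $\delta_0\in(0,\delta_{max})$ ensures $\delta_j\le\delta_{max}$ for every $j$, so that $\sum_{j=k_0}^{k_0+k-1}\delta_j\overline{t}_j\le \delta_{max}\sum_{j=0}^{\infty}\overline{t}_j\le\delta_{max}\overline{t}$ by \eqref{tkn}. Combining the two displays yields the claim for every $k\in\mathbb{N}$, the case $k=0$ being the trivial equality.

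There is no genuine obstacle here; the only point requiring a moment's care is that the bound $\Phi_{fix}(x_{k+1})\le\Phi_{fix}(x_k)+\delta_k\overline{t}_k$ must hold irrespective of whether the trial point is accepted, which is precisely why one invokes $\omega(x_k)\ge 0$ in the accepted case and the positivity of $\delta_k\overline{t}_k$ in the rejected case. Everything else reduces to the summability \eqref{tkn} and the uniform cap $\delta_k\le\delta_{max}$.
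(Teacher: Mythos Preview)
Your proposal is correct and follows essentially the same approach as the paper: both arguments observe that, for $k\ge k_0$, either outcome of Step~3 yields $\Phi_{fix}(x_{k+1})\le\Phi_{fix}(x_k)+\delta_k\overline{t}_k$ (using Lemma~\ref{lfij} and $\omega(x_k)\ge 0$ in the accepted case), and then conclude via telescoping, the summability \eqref{tkn}, and the bound $\delta_k\le\delta_{max}$. Your write-up is simply a more explicit version of the paper's sketch.
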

\begin{proof}
    Since $x_{k+1}$ is either $x_k$ or $x_t$, Lemma \ref{lfij} and nonnegativity of $\omega(x_k)$ (Lemma \ref{lmarginal}, a)) imply that in either case we have $\Phi_{fix}(x_{k+1})\leq\Phi_{fix}(x_{k}) +\delta_k \bar{t}_k$ for every $k \geq k_0$. Then, the results   follows  from summability of $\bar{t}_k$  \eqref{tkn} and the fact that the sequence of $\delta_k$ is uniformly bounded by $\delta_{max}$. 
\end{proof}
 Similar result can be obtained for the FS scenario as well. 
\begin{lemma}
\label{lfi}
    Suppose that the Assumptions \ref{A1}-\ref{A2} hold. If  $M_b=\emptyset$, then there exists a finite, random iteration $k_1$ such that the following holds for all $k \in \mathbb{N}$
$$\phi(x_{k_1+k})\leq\phi(x_{k_1})+\delta_{max}t.$$
\end{lemma}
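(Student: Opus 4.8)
The plan is to first use the hypothesis $M_b=\emptyset$ to identify the random iteration $k_1$ after which the method reduces to a purely deterministic non-monotone trust-region scheme, and then to telescope a one-step estimate. Since $M_b=\emptyset$, for each $i\in\{1,\dots,q\}$ there is an iteration at which $N_k^i=N$, and because the sample sizes are nondecreasing (as noted right after Algorithm~1), once $N_k^i=N$ it stays equal to $N$. Hence there is a finite, random $k_1$ with $N_k^i=N$ and $\mathcal{N}_k^i=\mathcal{N}^i$ for all $i$ and all $k\geq k_1$. Consequently, for $k\geq k_1$ we have $\phi_{\mathcal{N}_k}\equiv\phi$, $m_{\mathcal{N}_k}\equiv m_k$, $\omega_{\mathcal{N}_k}\equiv\omega$ and $M_b^k=\emptyset$, so Steps~2--4 follow the FS branch exactly as in \cite{VOS}, and in particular $x_{k+1}\in\{x_k,\,x_k+d_k\}$ with the choice governed solely by the test $\rho_{\mathcal{N}}\geq\eta$.

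Next I would establish the one-step bound $\phi(x_{k+1})\leq\phi(x_k)+\delta_k t_k$ for every $k\geq k_1$. If $\rho_{\mathcal{N}}<\eta$ then $x_{k+1}=x_k$ and the bound is trivial. If $\rho_{\mathcal{N}}\geq\eta$ then $x_{k+1}=x_k+d_k=x_t$; by the Cauchy decrease condition \eqref{cauchy} together with $\omega(x_k)\geq 0$ (Lemma~\ref{lmarginal}, a)), the model decrease $m_k(d_k)-m_k(0)$ is nonpositive, so from the definition \eqref{ronk} of $\rho_{\mathcal{N}}$ and $\eta>0$ we obtain $\phi(x_t)-\phi(x_k)-\delta_k t_k=\rho_{\mathcal{N}}\bigl(m_k(d_k)-m_k(0)\bigr)\leq\eta\bigl(m_k(d_k)-m_k(0)\bigr)\leq 0$, i.e. $\phi(x_{k+1})\leq\phi(x_k)+\delta_k t_k$. (The degenerate case $\omega(x_k)=0$, where the denominator in \eqref{ronk} vanishes, corresponds to $x_k$ being a Pareto critical point of the true problem, in which case the conclusion is immediate or the algorithm terminates; this can be dispatched in one line.)

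Finally I would telescope the one-step bound. Using $\delta_k\leq\delta_{max}$ for all $k$ and summing over $j=k_1,\dots,k_1+k-1$ gives $\phi(x_{k_1+k})\leq\phi(x_{k_1})+\delta_{max}\sum_{j=k_1}^{k_1+k-1}t_j\leq\phi(x_{k_1})+\delta_{max}\sum_{j=0}^{\infty}t_j\leq\phi(x_{k_1})+\delta_{max}t$ by the summability condition \eqref{tk}, which is the claim.

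I do not expect a genuine obstacle here: the only stochastic ingredient is the existence of the finite random $k_1$, and that is immediate from $M_b=\emptyset$ and monotonicity of the sample sizes. The remainder is standard deterministic trust-region bookkeeping, and the argument is essentially the FS-analogue of Corollary~\ref{lfijk} with $\Phi_{fix}$ replaced by $\phi$ and the $\omega$-term dropped; the minor point requiring a little care is simply the handling of the vanishing denominator in $\rho_{\mathcal{N}}$ when $\omega(x_k)=0$.
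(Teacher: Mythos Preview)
Your proposal is correct and follows essentially the same approach as the paper: identify $k_1$ from $M_b=\emptyset$ and nondecreasing sample sizes, derive the one-step bound $\phi(x_{k+1})\leq\phi(x_k)+\delta_k t_k$ from $\rho_{\mathcal{N}}\geq\eta$ together with the Cauchy decrease \eqref{cauchy}, and telescope using $\delta_k\leq\delta_{max}$ and \eqref{tk}. The paper's version is slightly terser (it writes the chain $\phi(x_{k+1})\leq\phi(x_k)+t_k\delta_k+\eta(m_k(d_k)-m_k(0))\leq\phi(x_k)+t_k\delta_{max}$ directly) and does not explicitly flag the $\omega(x_k)=0$ edge case, but the substance is identical.
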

\begin{proof}
    Since $M_b=\emptyset$, there exists a random, finite iteration  $k_1\in \mathbb{N}$ such that for all $k\geq k_1$ the full sample is reached, i.e.,  $N_k^i=N$ for all $i=1,...,q$, and thus we have $\phi_{\caln}(x_k)=\phi(x_k)$, $m_{\caln}(d_k)=m_k(d_k)$ and $\omega_{\caln}(x_k)=\omega(x_k)$. Let us observe iterations $k \geq k_1$ and denote $\rho_k:=\rho_{\mathcal{N}}$.
    Then, according to the algorithm, the step is accepted if and only if  $\rho_{k}\geq\eta$. So, either 
    $\phi(x_{k+1})=\phi(x_k)$ or $\phi(x_{k+1})=\phi(x_{t})$ and 
     $\rho_k\geq\eta$, i.e., 
    \begin{align*}
\phi(x_{k+1})&\leq\phi(x_k)+t_k\delta_k+\eta(m_k(d_k)-m_k(0))\\
        &\leq\phi(x_k)+t_k\delta_k-\frac{\eta}{2}\omega(x_k)\min\{\delta_k,\frac{\omega(x_k)}{\beta_k}\}\\
        &\leq\phi(x_k)+t_k\delta_{max}.
    \end{align*}
    Hence the result follows from \eqref{tk}.
\end{proof}
In order to prove the main result, we assume that the expected value of any local cost function $f^{i}_{j}$ is uniformly bounded. For instance, this is true under the bounded iterates assumption which is common is stochastic framework. More precisely, we assume the following.
\begin{assumption}
\label{A3}
There exists a constant $C>0$ such that for all $i=1,...,q$ and $j\in \mathcal{N}^i$  we have  
$\mathbb{E}(|f^{i}_{j}(x_{k_0})|\; | \;  MB)\leq C$ and $ \mathbb{E}(|f^{i}_{j}(x_{k_1})|\; | \; FS)\leq C.$
\end{assumption}
Notice that Assumption \ref{A3} implies that 
\begin{equation}
    \label{c} 
    \mathbb{E}(\Phi_{fix} (x_{k_0})\; | \;  MB)\leq C \quad \mbox{and} \quad \mathbb{E}(\phi_{fix}(x_{k_1})\; | \; FS)\leq C.
\end{equation}
In the sequel we will show the convergence result for ASMOP algorithm.  The analysis is conducted in a similar way as in the proof of Theorem 1 of \cite{NKNKJ}, but  adapted to fit the  multi-objective framework.  For the sake of readability, we observe separately  MB and FS case. First we consider the MB scenario and show that $\liminf_{k\rightarrow\infty}\omega(x_k)=0$ almost surely.

\begin{theorem} \label{glavnaMB} 
    Suppose that Assumptions \ref{A1}-\ref{A3} hold.  Then 
$$P(\liminf_{k\rightarrow\infty}\omega(x_k)=0 \;| \; MB)=1.$$
\end{theorem}

\begin{proof} Let us consider the MB scenario, i.e., the sample paths such that  $M_b\neq\emptyset$. Then,  Lemma \ref{lrodk} implies  that we have $\rho_{\cald}\geq\nu,$ 
    for all $k\geq k_0$ and for every possible choice of $\cald$. Moreover, we know that for each $i \in M_b$ we have  $N_k^i=\overline{N}^i<N$, and 
    for each $i \notin M_b$ we have  $N_k^i=N$ for all $k\geq k_0$. 
    Moreover, according to Step 2 of the algorithm, we have 
$$\omega_{\caln}(x_k)\geq\varepsilon h_k^i\geq\varepsilon \frac{1}{N}=:\varepsilon_N>0$$
for all $i \in M_b$ and $k\geq k_0$. Now, we will show that there exists an infinite subset of iterations at which the trial point is accepted, i.e., that   $\rho_{\caln}\geq\eta$ occurs infinite number of times. 

Suppose the contrary, that there exists $k_3\geq k_0$ such that   $\rho_{\caln}<\eta$ for all $k\geq k_3$. This further implies that $\lim_{k \to \infty} \delta_k= 0$ due to Step 3 of the algorithm. Moreover, in this scenario, we also have a fixed sample $\mathcal{N}_{k}=\mathcal{N}_{k_3}$ for all $k\geq k_3$. Furthermore, since $m_{\caln}(0)=\phi_{\caln}(x_k)$, from Lemma \ref{fimk} for all $k\geq k_3$ we have that
\begin{align*}
    |\rho_{\caln}-1|&=|\frac{\phi_{\caln}(x_t)-\phi_{\caln}(x_k)-t_k\delta_k}{m_{\caln}(d_k)-m_{\caln}(0)}-1|=|\frac{\phi_{\caln}(x_t)-t_k\delta_k-m_{\caln}(d_k)}{m_{\caln}(d_k)-m_{\caln}(0)}|\\
    &\leq\frac{c_f\delta_k^2+t_k\delta_k}{\frac{\omega_{\caln}(x_k)}{2}\min\{\delta_k,\frac{\omega_{\caln}(x_k)}{\beta_k}\}}\leq\frac{c_f\delta_k^2+t_k\delta_k}{\frac{\varepsilon_N}{2}\min\{\delta_k,\frac{\varepsilon_N}{c_b}\}}.
\end{align*}
Since $\lim_{k \to \infty} \delta_k= 0$, there exists $k_4\geq k_3$ such that  $\delta_k<\varepsilon_N/c_b$ for all $k\geq k_4$ and thus 
$$ |\rho_{\caln}-1|\leq\frac{c_f\delta_k^2+t_k\delta_k}{\frac{\varepsilon_N\delta_k}{2}}=\frac{2c_f\delta_k+2t_k}{\varepsilon_N}.$$
Letting $k \to \infty$ and using the fact that $\lim_{k \to \infty} t_k= 0$ due to \eqref{tk}, we obtain $\lim_{k \to \infty}\rho_{\caln}=1$, which is in contradiction with the assumption of $\rho_{\caln}<\eta<\frac{3}{4}$ for all $k\geq k_3$. 

Thus, we have just shown that there exists an infinite  subsequence $K_2 \subset \mathbb{N}$  such that $\rho_{\caln}\geq \eta$ for all $k\in K_2$. Let $
K_3=K_2\cap\{k_0,k_0+1,...\}:=\{k_j\}_{j \in \mathbb{N}}.$
Then, for all $k\in K_3$ we have  $\rho_{\cald}\geq\nu$ and $\rho_{\caln}\geq\eta$, and thus $x_{k+1}=x_t$. Notice that for all the intermediate iterations, i.e., for all $k\geq k_0$, $k \notin K_3$ we have $x_{k+1}=x_k$. Thus, Lemma \ref{lfij} implies 
$$\Phi_{fix}(x_{k_{j+1}})=...=\Phi_{fix}(x_{k_{j}+1})\leq \Phi_{fix}(x_{k_j})-\nu\omega(x_{k_j})+\delta_{k_j}\overline{t}_{k_j}$$
and thus for each $ j \in \mathbb{N}$
$$\Phi_{fix}(x_{k_{j}})\leq \Phi_{fix}(x_{k_0})-\nu \sum_{l=0}^{j-1}\omega(x_{k_l})+\delta_{max}\overline{t}.$$
Applying the conditional expectation $\mathbb{E}(\cdot\;| \; MB)$ and using Assumptions \ref{A1} which implies that $\Phi_{fix}(x_{k_{j}})$ is bounded from below for any $j$, by employing Assumption  \ref{A3}  and letting $j\to \infty$ we conclude that 
$$\sum_{l=0}^{\infty }\mathbb{E}(\omega(x_{k_l})\;| \; MB)< \infty.$$ 
Now, for any $\epsilon$, from  Markov's inequality we obtain 
$$\sum_{l=0}^{\infty }P(\omega(x_{k_l})\geq \epsilon \;| \; MB)\leq \frac{1}{\epsilon} \sum_{l=0}^{\infty }\mathbb{E}(\omega(x_{k_l})\;| \; MB)< \infty.$$
Finally,   Borel-Cantelli Lemma  implies that $\lim_{ l \to \infty} \omega(x_{k_l})=0$ which completes the proof. 
\end{proof}
Next, we show the same result for the FS scenario. 
\begin{theorem} \label{glavnaFS} 
       Suppose that Assumptions \ref{A1}-\ref{A3} hold.  Then
$$P(\liminf_{k\rightarrow\infty}\omega(x_k)=0 \;| \; FS)=1.$$
\end{theorem}

\begin{proof}
 Let us consider the FS scenario, i.e., the sample paths such that  $M_b=\emptyset$. Then, as in the proof of Lemma \ref{lfi}, for all $k\geq k_1$ we have $N_k^i=N$ for all $i=1,...,q$, and thus $\phi_{\caln}(x_k)=\phi(x_k)$, $m_{\caln}(d_k)=m_k(d_k)$, $\omega_{\caln}(x_k)=\omega(x_k)$ and $\rho_k:=\rho_{\mathcal{N}}$. 

Let us suppose the the statement of this theorem is not true, i.e., that there exists $\varepsilon>0$ and $k_2\geq k_1$ such that $\omega(x_k)>\varepsilon$ for all $k\geq k_2$.  
Since \eqref{tk} implies that $\lim_{k \to \infty} t_k=0, $ without loss of generality, assume that $t_k<c_f$ for all $k\geq k_2$. Then it can be shown that the sequence of $\delta_k$ is uniformly bounded away from zero. Indeed, if at any iteration $k\geq k_2$ the value of $\delta_k$ falls below $\hat{\delta}:=\varepsilon/(20 \max \{1, c_f, c_b\})$, then  Lemma \ref{fimk} implies 
\begin{align*}
    |\rho_k-1|&=|\frac{\phi(x_t)-m_k(d_k)-t_k\delta_k}{m_k(d_k)-m_k(0)}|\leq\frac{c_f\delta_k^2+t_k\delta_k}{\frac{\omega(x_k)}{2}\min\{\delta_k,\frac{\omega(x_k)}{\beta_k}\}}\\
&\leq\frac{c_f\delta_k^2+t_k\delta_k}{\frac{\varepsilon}{2}\min\{\delta_k,\frac{\varepsilon}{c_b}\}}\leq
\frac{c_f\delta_k^2+t_k\delta_k}{\frac{\varepsilon}{2}\delta_k}<
\frac{c_f\hat{\delta}+c_f\hat{\delta} }{\frac{\varepsilon}{2}}<\frac{1}{4}
\end{align*}
which further implies that  $\rho_k>\frac{3}{4}>\eta$, and, according to the algorithm, the radius is increased, i.e., $\delta_{k+1}>\delta_k$. Therefore, there exists $\tilde{\delta}>0$, such that $\delta_k>\tilde{\delta}$, for all $k\geq k_2$.

On the other hand, the existence of $k_3$ such that $\rho_k < \eta $ for every $k \geq k_3$ would imply $\lim_{k\rightarrow\infty}\delta_k=0$ due to Step 4 of the algorithm, which is in contradiction with $\delta_k>\tilde{\delta}$, for all $k\geq k_2$. Thus, we conclude that there must exist an infinite number of iterations  $K_1\subset \mathbb{N}$ such that for all $k \in K_1$ there holds $k\geq k_2$ and $\rho_k\geq\eta$. 
Therefore, for all $k\in K_1$ we have 
\begin{align*}
\phi(x_{k+1})&\leq\phi(x_k)+\delta_kt_k+\eta(m_k(d_k)-m_k(0))\\
    &\leq\phi(x_k)+\delta_k t_k-\eta\frac{\omega(x_k)}{2}\min\{\delta_k,\frac{\omega(x_k)}{\beta_k}\}\\
    &\leq\phi(x_k)+\delta_k t_k-\eta\frac{\varepsilon}{2}\min\{\tilde{\delta},\frac{\varepsilon}{c_b}\}\\
    &=:\phi(x_k)+\delta_k t_k-\hat{c}.
\end{align*}
Again, without loss of generality, we can assume that for all $k \in K_1$ there holds $\delta_k t_k\leq \hat{c}/2$ and thus 
$$\phi(x_{k+1})\leq \phi(x_k)-\frac{\hat{c}}{2}, \quad k \in K_1.$$
Furthermore, by denoting $K_1=\{k_j\}_{j \in \mathbb{N}}$ and using the fact that for all $k\geq k_2$ such that $k \notin K_1$ the trial point is rejected and thus $\phi(x_{k+1})=\phi(x_k)$, we conclude that for all $j \in \mathbb{N}$ 
$$\phi(x_{k_{j+1}})=...=\phi(x_{k_{j}+1})\leq\phi(x_{k_j})-\frac{\hat{c}}{2}. $$
Applying the conditional expectation and using Assumption \ref{A3} we obtain that for every $j \in \mathbb{N}$
$$\mathbb{E}(\phi(x_{k_{j}})\; | \; FS)\leq C-j \frac{\hat{c}}{2} $$
and letting $j\to \infty$ we obtain 
$\lim_{j\rightarrow\infty}\mathbb{E}(\phi(x_{k_j})\;|\;FS)=-\infty$. This is in contradiction with Assumption \ref{A1} which implies that the function $\phi$ is bounded from below. This completes the proof. 
\end{proof}

Finally, we obtain the following result as a direct consequence of the previous two theorems and Lemma \ref{lmarginal}. 

\begin{Cor}
    \label{glavna}  Suppose that Assumptions \ref{A1}-\ref{A3} hold.  Then the sequence of iterates $\{x_k\}_{k \in \mathbb{N}}$ generated by the ASMOP algorithm satisfies   
$$\liminf_{k\rightarrow\infty}\omega(x_k)=0 \quad \mbox{a.s.}$$
Moreover, if the sequence of iterates is bounded, then a.s. there exists an accumulation point of the sequence $\{x_k\}_{k \in \mathbb{N}}$ which is a Pareto critical point for problem \eqref{mop}. 
\end{Cor}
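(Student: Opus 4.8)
The plan is to combine the two scenario-specific theorems with a law of total probability argument, and then upgrade the $\liminf$ statement to an accumulation-point statement using the continuity of $\omega$. First I would observe that the sample paths of the algorithm split into exactly two mutually exclusive and exhaustive events: $MB$ (where $M_b \neq \emptyset$) and $FS$ (where $M_b = \emptyset$), by the definition \eqref{mb}. Theorem \ref{glavnaMB} gives $P(\liminf_{k\to\infty}\omega(x_k)=0 \mid MB)=1$ and Theorem \ref{glavnaFS} gives $P(\liminf_{k\to\infty}\omega(x_k)=0 \mid FS)=1$. Writing $A=\{\liminf_{k\to\infty}\omega(x_k)=0\}$, the total probability formula yields
$$
P(A) = P(A\mid MB)P(MB) + P(A\mid FS)P(FS) = P(MB)+P(FS)=1,
$$
which establishes the first assertion $\liminf_{k\to\infty}\omega(x_k)=0$ almost surely. (One should note the mild edge cases where $P(MB)=0$ or $P(FS)=0$; the conditional statements are then vacuous on a null set and the argument is unaffected.)

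For the second assertion, I would argue on the almost sure event $A$ intersected with the event that $\{x_k\}_{k\in\mathbb{N}}$ is bounded. On this event, $\liminf_{k\to\infty}\omega(x_k)=0$ means there is a subsequence $\{x_{k_l}\}$ with $\omega(x_{k_l})\to 0$. Since $\{x_k\}$ is bounded, so is $\{x_{k_l}\}$, and by Bolzano--Weierstrass it has a further subsequence converging to some $x^*\in\mathbb{R}^n$. By Lemma \ref{lmarginal}, part d), $\omega$ is continuous, so $\omega(x^*)=\lim \omega(x_{k_l})=0$ along that further subsequence. Then by Lemma \ref{lmarginal}, part b) read in its contrapositive form together with part c) --- namely, if $x^*$ were not Pareto critical then $\omega(x^*)>0$ --- we conclude $x^*$ is a Pareto critical point for \eqref{mop}. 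Since $x^*$ is a limit of a subsequence of $\{x_k\}$, it is an accumulation point of the sequence, which gives the claim.

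I do not expect any serious obstacle here: the corollary is essentially a bookkeeping combination of results already proved. The only point requiring a little care is making sure the "bounded iterates" hypothesis is invoked on the correct almost sure event --- that is, intersecting the probability-one event from the first part with the (deterministic or event-dependent) boundedness assumption before extracting the convergent subsequence --- and confirming that continuity of $\omega$ in Lemma \ref{lmarginal}d) is exactly what licenses passing the limit inside $\omega$. Both are routine.
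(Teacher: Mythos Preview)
Your proposal is correct and matches the paper's approach exactly: the paper states that Corollary~\ref{glavna} is ``a direct consequence of the previous two theorems and Lemma~\ref{lmarginal}'' without further elaboration, and your argument is precisely the natural unpacking of that sentence---splitting into the $MB$/$FS$ events via total probability to combine Theorems~\ref{glavnaMB} and~\ref{glavnaFS}, then using Bolzano--Weierstrass together with the continuity of $\omega$ from Lemma~\ref{lmarginal}(d) and the characterization in Lemma~\ref{lmarginal}(b),(c) for the accumulation-point claim.
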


\section{Numerical results}
In this section we showcase numerical results obtained on machine learning problems. The proposed algorithm - ASMOP is compared to the  state-of-the-art stochastic multi-gradient method  SMG \cite{LV} for  multi-objective problems.   We also compare ASMOP to  SMOP \cite{NNL} which also uses sample average  approximations, but different sample size guidance.  The comparison is made through FEV (number of function evaluations) and CPU time.  The first measure is relevant for problems where accessing the data represents a dominant cost and  it also serves as a proxy for computational cost. The other measure is relevant in applications such as portfolio optimization in finance, where the approximate solution needs to be obtained promptly. It is, however, sensitive to  implementation issues which may not be optimal for all the considered methods. Nevertheless, the results reveal potentials of ASMOP considering both measures. 

We tested the relevant methods on different types of problems.  Section 5.1 covers convex problems where the objectives are constructed through regularized logistic regression models. In Section 5.2 we present the results obtained on different types of objectives - one objective is modeled by regularized nonconvex loss in 2-layer Neural Networks, while the other one represents a least squares problem coming from, e.g.,  linear regression.    Additionally, within Section 5.3  we  compare different parameter configurations of ASMOP and demonstrate algorithm's behavior under different sample size increments.
In all sections, since the gradients and Hessians reuse intermediate quantities computed during the loss evaluation (e.g., $x^Ta_j$), they can be obtained with a negligible additional computational cost. Therefore, one FEV   accounts for the joint evaluation of the function, gradient, and Hessian of a single function $f^i_j$.
\subsection{Logistic regression}
The first set of experiments consists of minimizing the regularized logistic regression loss function with the CIFAR10, MNIST and Fashion MNIST dataset. CIFAR10 is a dataset for an image classification problem, and it consists of $N=5\times10^4$ RGB images of $32\times32$ pixels in 3 hues, which are in 10 categories, hence the dimension of the problem is  $n=32\times32\times3=3072$. We create a model which differentiates category 0 (airplane) from 1 (car), and also category 2 (bird) from 3 (cat) at the same time.  Assuming $\mathcal{N}$ is the set of indices of the training dataset, we split the dataset into two subsets by creating two index subgroups $\mathcal{N}^1$ and $\mathcal{N}^2$ such that $\mathcal{N}^1$ are indices of the samples which are in categories $0$ and $1$, and $\mathcal{N}^2$ are indices of the samples in categories $2$ and $3$.  The second dataset we used was the MNIST dataset which consists of $N=7\times10^4$ samples of gray-scale images of handwritten digits with $32\times32$ pixels, hence the dimension is $n=1024$. Once again, we created two datasets from MNIST, the first containing samples with labels $0$ and $8$, and the second with labels $1$ and $4$. This way the model differentiates digits $0$ or $8$, and  digits $1$ and $4$ at the same time, similarly as with CIFAR10. We made both subgroups  contain the same amount of elements $N=10^4$. 
The third dataset we used was the Fashion MNIST dataset which consists of $N=7\times10^4$ samples of gray-scale images of clothing items with $32\times32$ pixels, hence the dimension is $n=1024$. Once again, we created two datasets from Fashion MNIST, the first containing samples with labels $0$ and $1$, corresponding to T-shirt/top and trouser, and the second with labels $2$ and $3$, corresponding to pullover and dress. This way the model differentiates T-shirt/top or trouser, and pullover or dress at the same time, similarly as with CIFAR10 and MNIST. We made both subgroups contain the same amount of elements $N=10^4$.
Finally, we considered the MNIST-Fairness dataset, which is derived from the MNIST dataset. As before, the original dataset consists of $N=7\times10^4$ gray-scale images of handwritten digits with $32\times32$ pixels, hence the dimension is $n=1024$. From this dataset we selected only the samples with labels $1$ and $4$, corresponding to the handwritten digits $1$ and $4$.

In order to introduce a sensitive attribute, we considered the pixel at position $162$ and split the samples according to whether the pixel value was smaller or larger than its average value over the dataset. This procedure generates two subgroups of data, which can be interpreted as two sensitive groups. 
As mentioned, we are minimizing a regularized logistic regression loss function
\begin{equation}
\label{logreg}
    \min_{x\in \mathbb{R}^n}f(x):= (f^1(x),f^2(x)),
\end{equation}
where the function components are defined as follows:
\begin{equation}
\label{logregf}
f^i(x)=\frac{1}{N}\sum_{j\in \mathcal{N}^i}\log(1+e^{(-y_j(x^Ta_j))})+\frac{\lambda_i}{2}\|\hat{x}\|^2,i=1,2.    
\end{equation}
Here,  
$x\in \mathbb{R}^n$ represents model coefficients we are trying to find, $\hat{x}$ coefficient vector without the intercept,  $a_j$ the feature vector 
and $y_j$ the relevant label.

There are several factors and parameter choices which influence the behavior of the algorithm. For this experiment, we set the initial subsampling size $N_0^i=0.01N$, and we update the size at Step 2 when needed by a small amount, $\Delta N_k^i=0.02N$. For the nonmonotonicity parameters we set $t_k=\frac{1}{(k+1)^{1.51}}$, and $\overline{t}_k=\frac{100}{(k+1)^{1.51}}$, which satisfy  \eqref{tk} and \eqref{tkn}. Since the power of the denominator is slightly larger than 1, the algorithm will slowly and gradually decrease the tolerance towards the rise of the function value. Both additional sampling sizes $D_k^1$ and $D_k^2$ are set to $2$  for all iterations in order to keep the additional sampling computationally cheap. We compare algorithms' true marginal function values $\omega(x_k)$ as a measure of stationarity for multi-objective problems. We calculate $\omega(x_k)$ at each iteration and plot it against the  FEV or CPU time. 
For all the considered methods, we perform $5$ runs with the same parameters and report the plots of the averaged values. In the plots of $\omega(x_k)$, a shaded region is shown to represent the Standard Deviation.

The following Figures \ref{cifar}, \ref{mnistfig}, \ref{mnistfashionfig} and \ref{mnistfairnessfig} compare the three mentioned algorithms for the fixed budget of $8\cdot10^6$ function evaluation and 80 seconds for CIFAR10 dataset, $10^6$ and 3 seconds for MNIST dataset, $2\cdot10^6$ and 6 seconds for Fashion MNIST dataset and $10^5$ and $0.3$ seconds for MNIST-Fairness dataset. We have also included the subsampling sizes of SMOP and ASMOP for both function components in all figures. It is evident that ASMOP uses small amount of information to achieve large function reduction. We have noticed that if we set $1\%$ of samples as a starting size, and increase the size of both subsamples by $2\%$ of the respective maximum sample size, the subsampling sizes update similarly for both criteria $f^1$ and $f^2$. 
\begin{figure}[H]
\centering
\includegraphics[width=5.7cm]{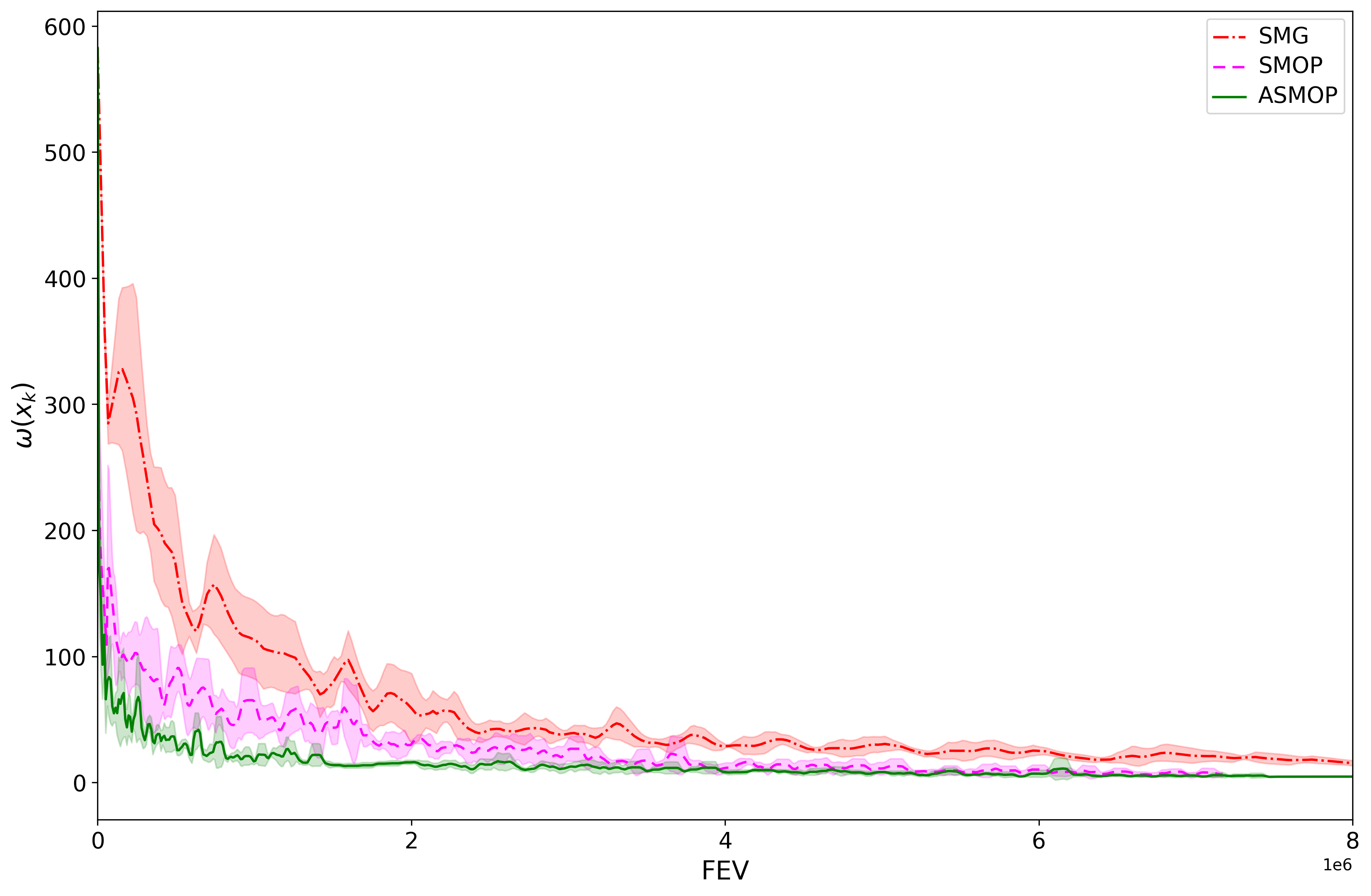}\quad
\includegraphics[width=5.7cm]{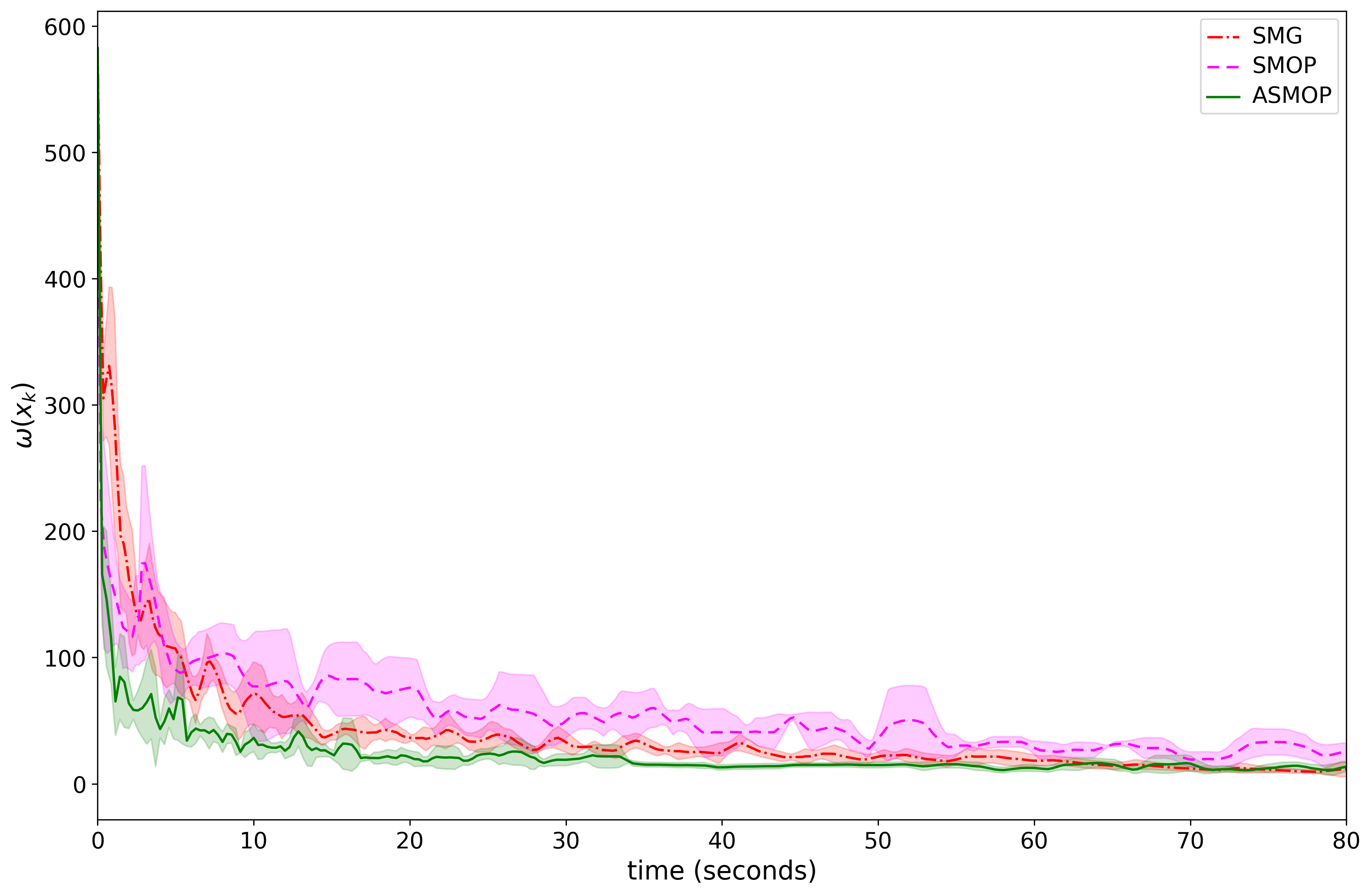}\\
\includegraphics[width=6.27cm]{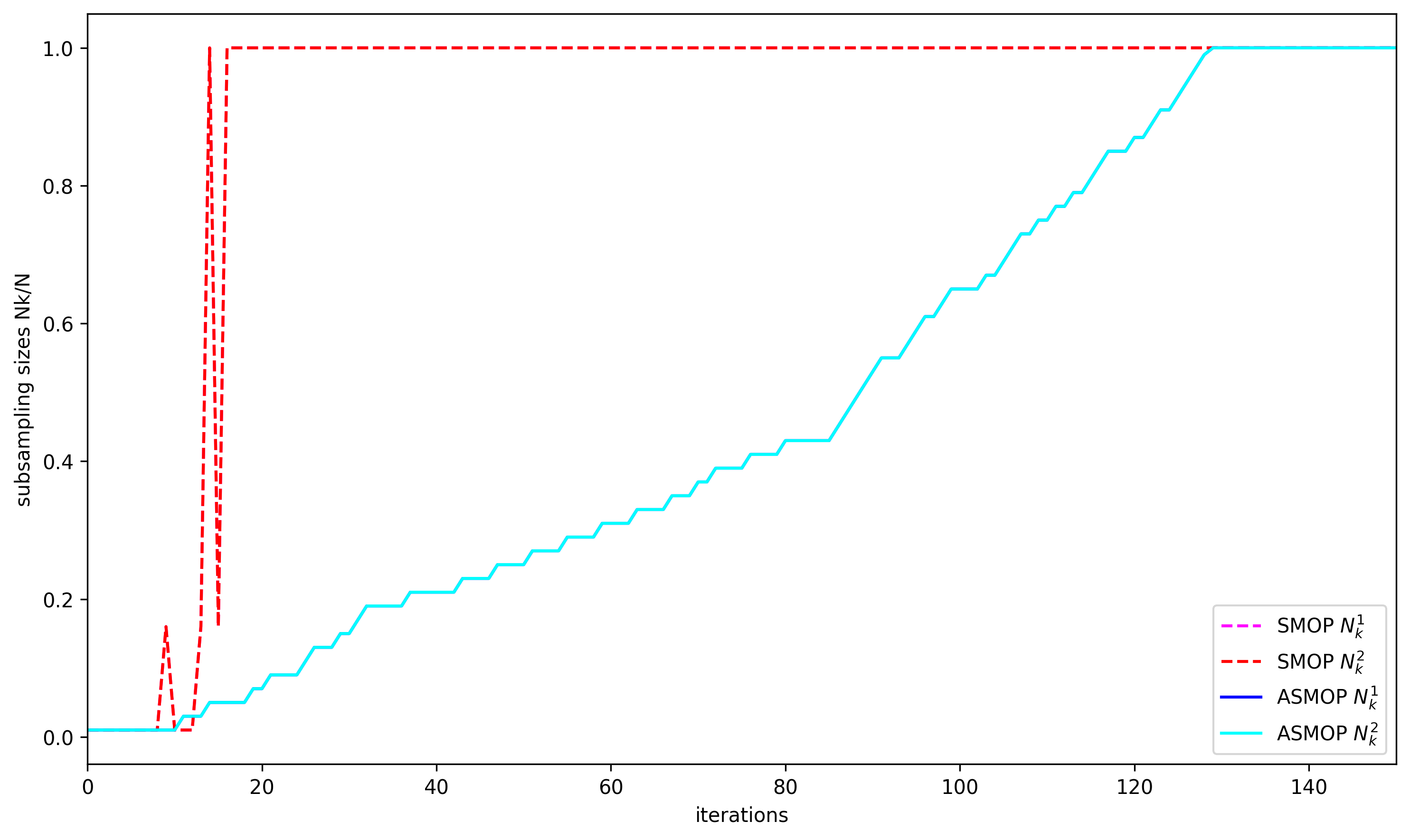}
\caption{{\footnotesize{CIFAR10 dataset, problem \eqref{logregf}, $N=10^4,n=3072$. First row: optimality measure against function evaluations (left) and optimality measure against time in second (left). Second row: sample sizes behavior. Parameters: $x_0=(0.1,0.1,...,0.1), \delta_0=1, \delta_{max}=8, \gamma_1=0.5, \gamma_2=2, \nu=10^{-4}, \eta=0.25,\varepsilon=10^{-9}.$}}}	
\label{cifar}
\end{figure}

\begin{figure}[H]
\centering
\includegraphics[width=5.7cm]{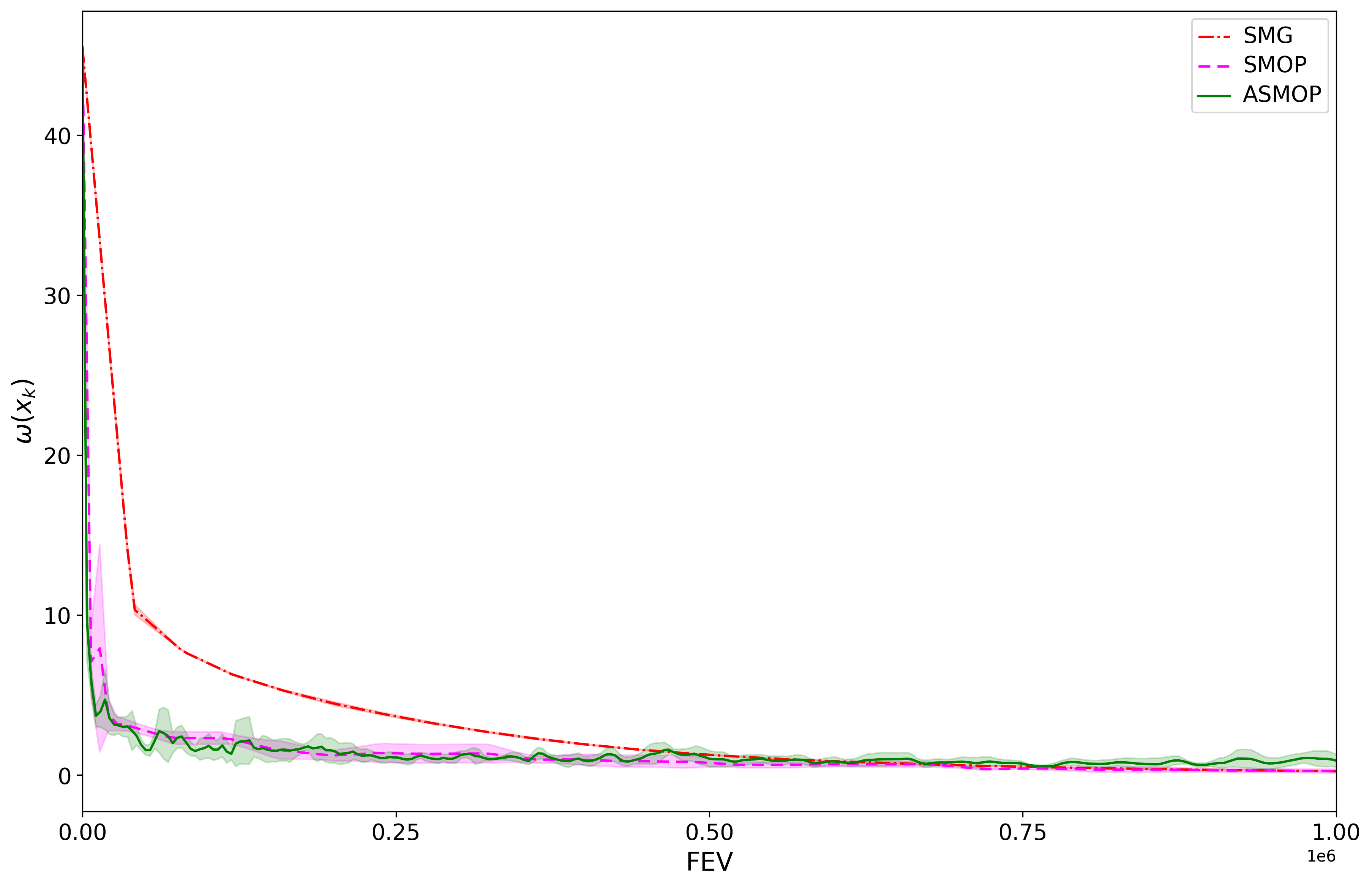}\quad
\includegraphics[width=5.7cm]{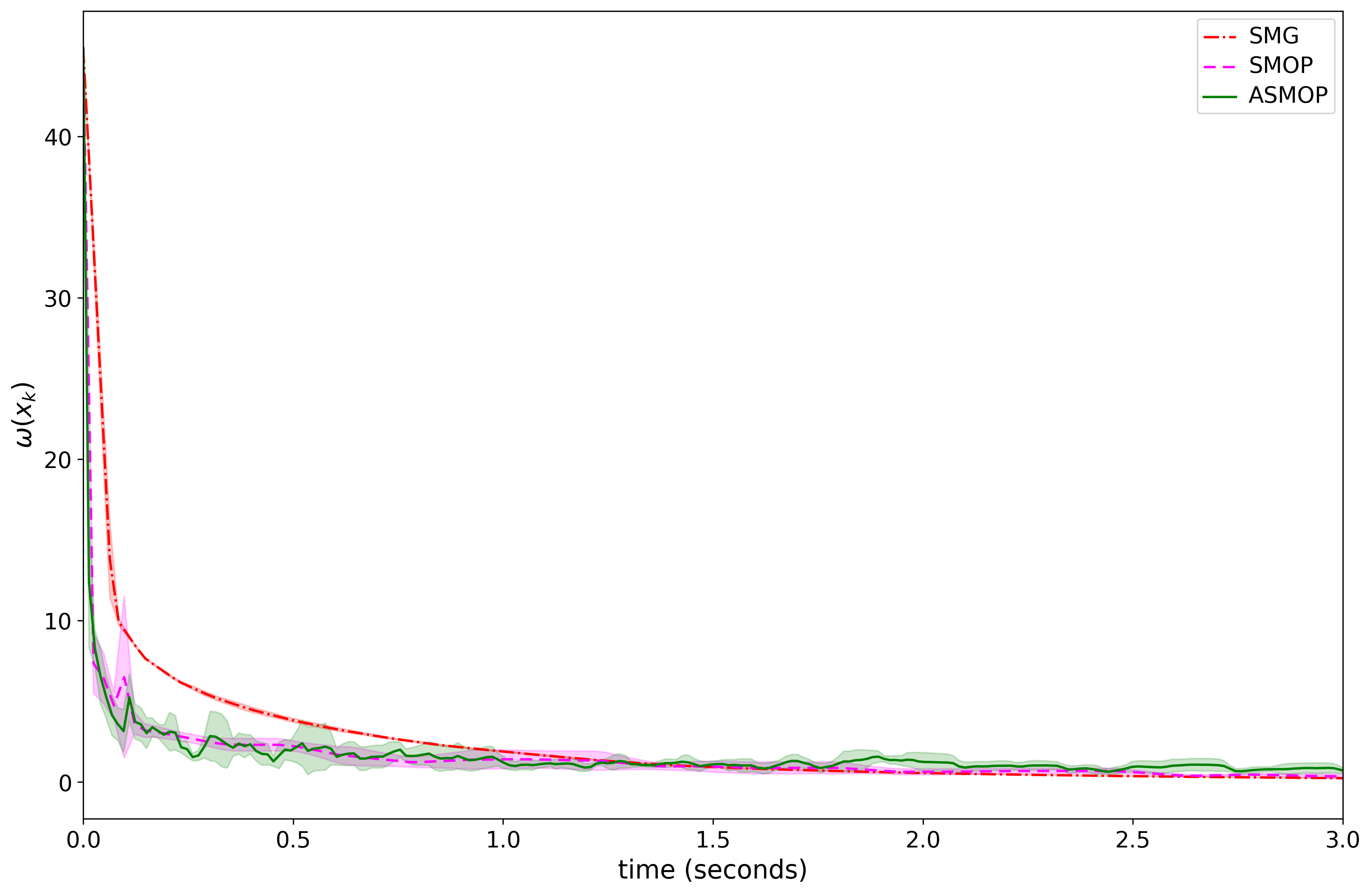}\\
\includegraphics[width=6.27cm]{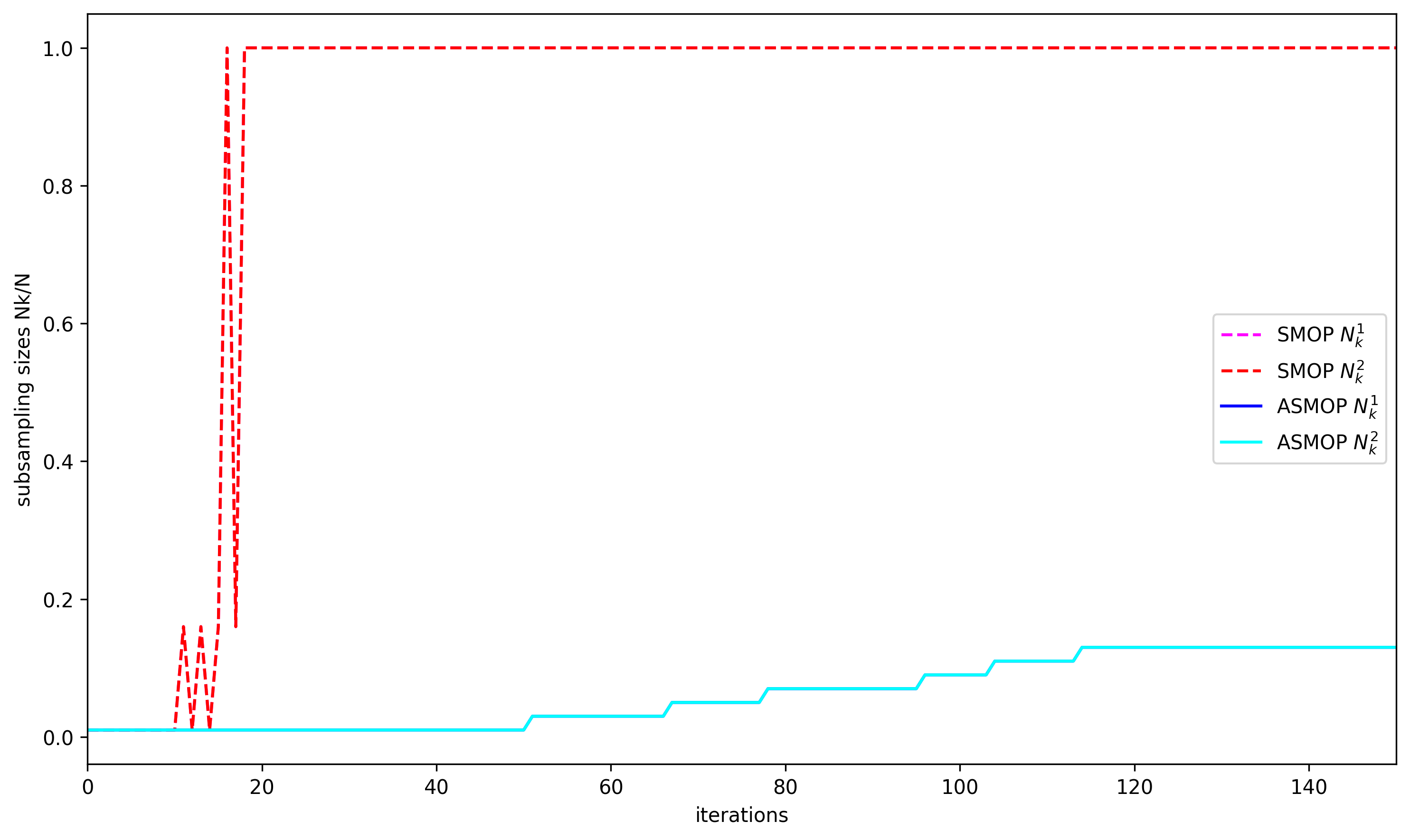}

\caption{{\footnotesize{MNIST dataset, problem \eqref{logregf},  $N=10^4,n=1024$. First row: optimality measure against function evaluations (left) and optimality measure against time in second (left). Second row: sample sizes behavior. Parameters: $x_0=(0.1,0.1,...,0.1), \delta_0=1, \delta_{max}=8, \gamma_1=0.5, \gamma_2=2, \nu=10^{-4}, \eta=0.25,\varepsilon=10^{-4}.$}}}	
\label{mnistfig}
\end{figure}

\begin{figure}[H]
\centering
\includegraphics[width=5.7cm]{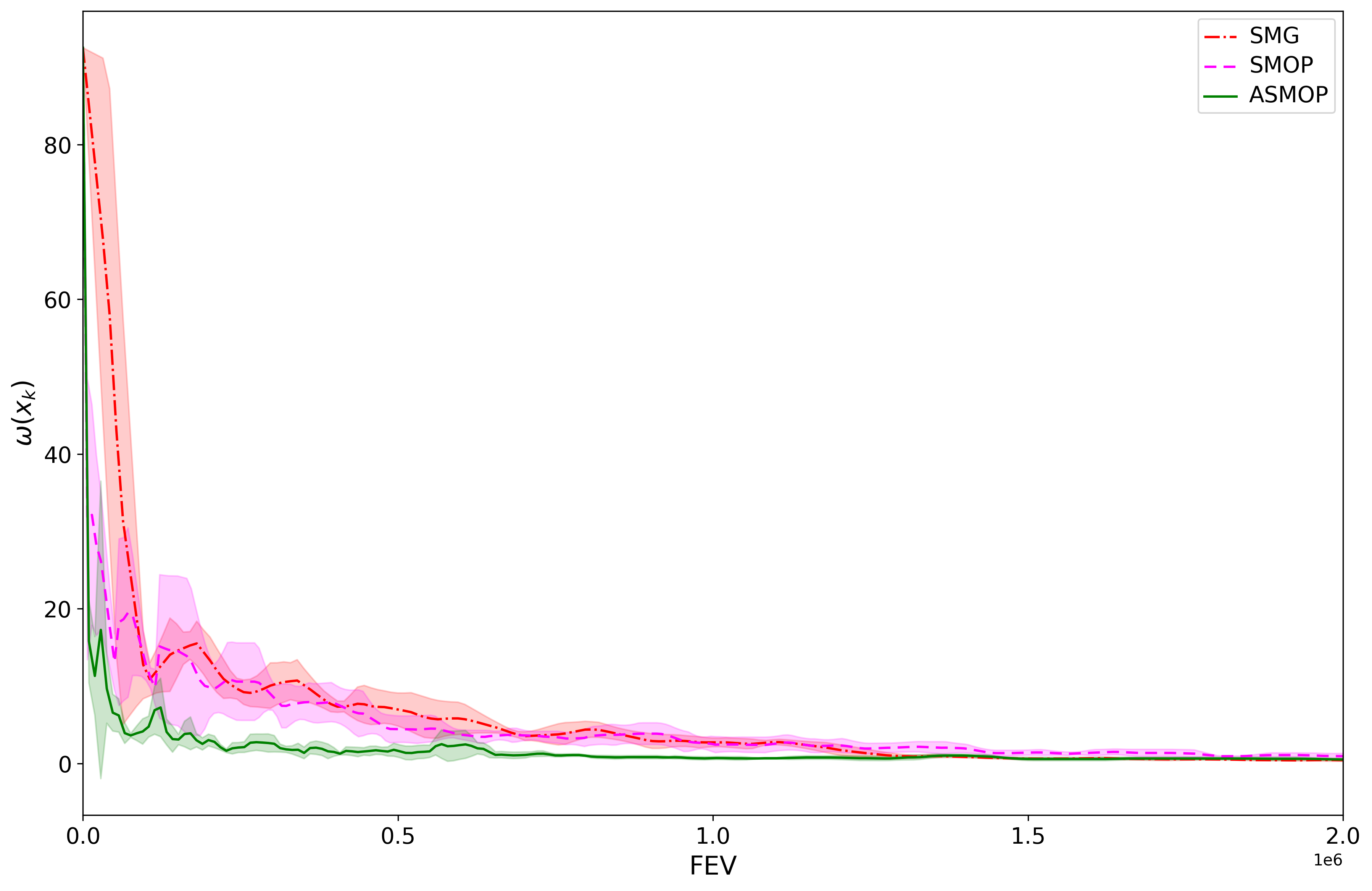}\quad
\includegraphics[width=5.7cm]{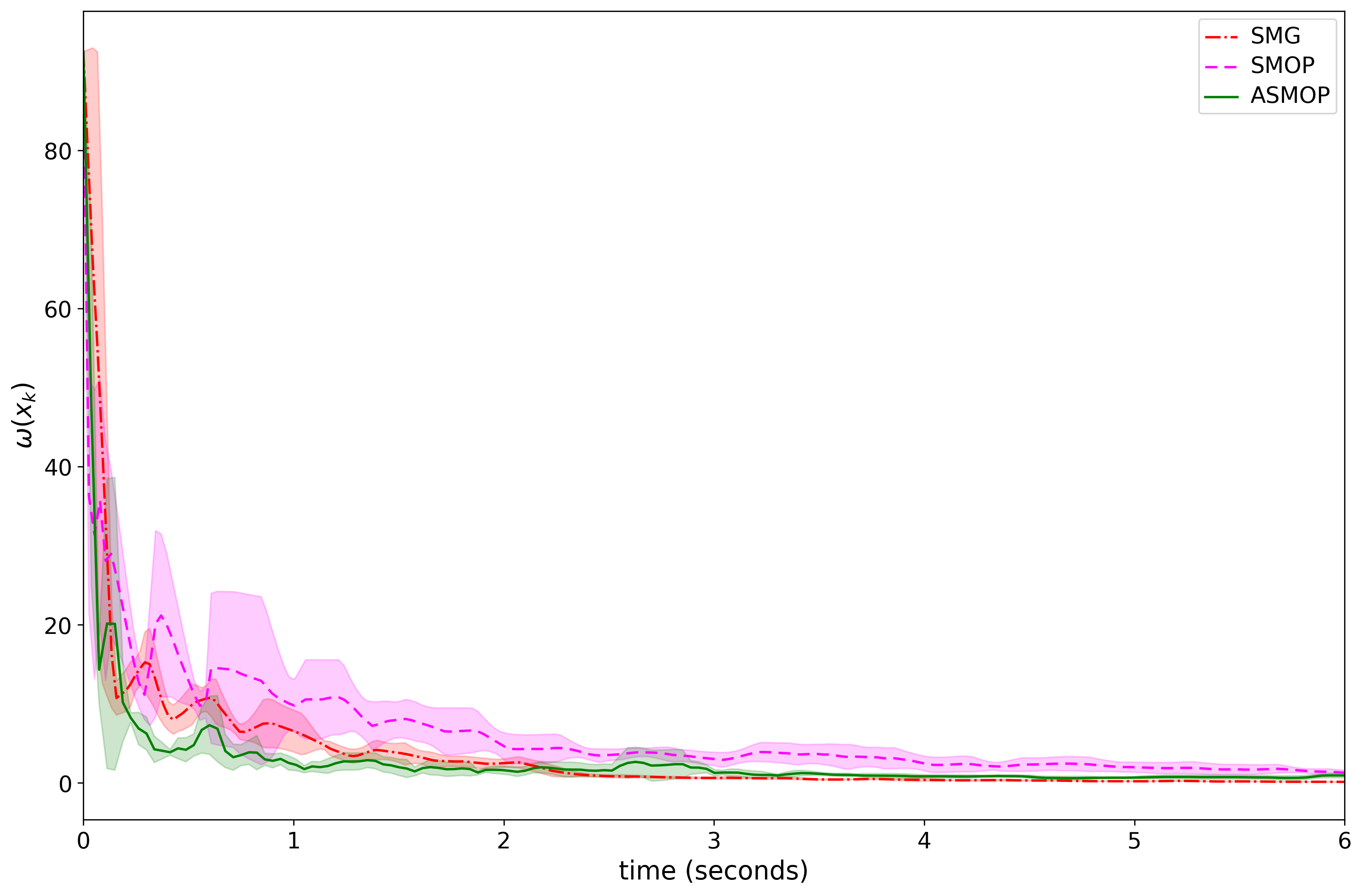}\\
\includegraphics[width=6.27cm]{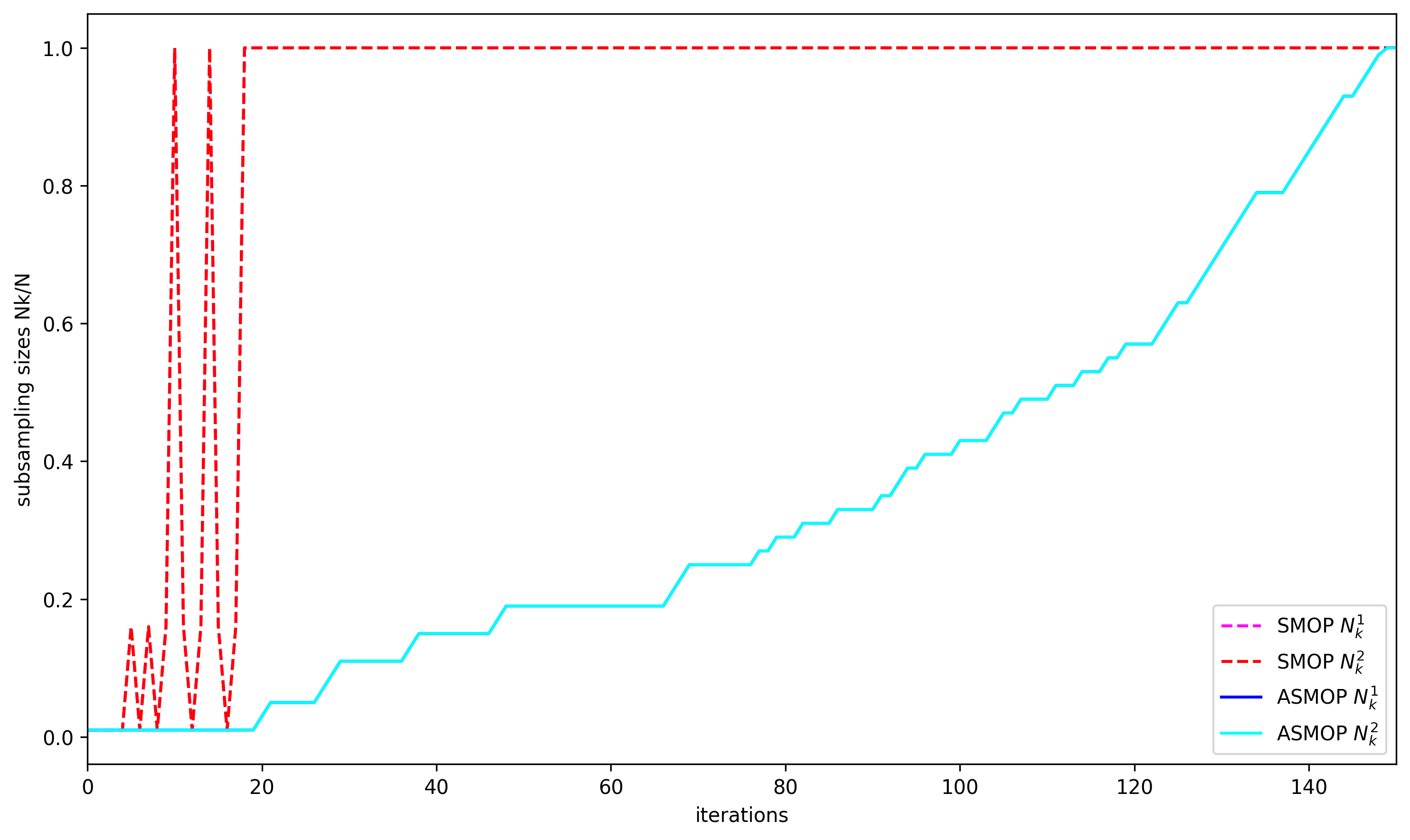}

\caption{{\footnotesize{Fashion MNIST dataset, problem \eqref{logregf},  $N=10^4,n=1024$. First row: optimality measure against function evaluations (left) and optimality measure against time in second (left). Second row: sample sizes behavior. Parameters: $x_0=(0.1,0.1,...,0.1), \delta_0=1, \delta_{max}=8, \gamma_1=0.5, \gamma_2=2, \nu=10^{-4}, \eta=0.25,\varepsilon=10^{-4}.$}}}	
\label{mnistfashionfig}
\end{figure}

\begin{figure}[H]
\centering
\includegraphics[width=5.7cm]{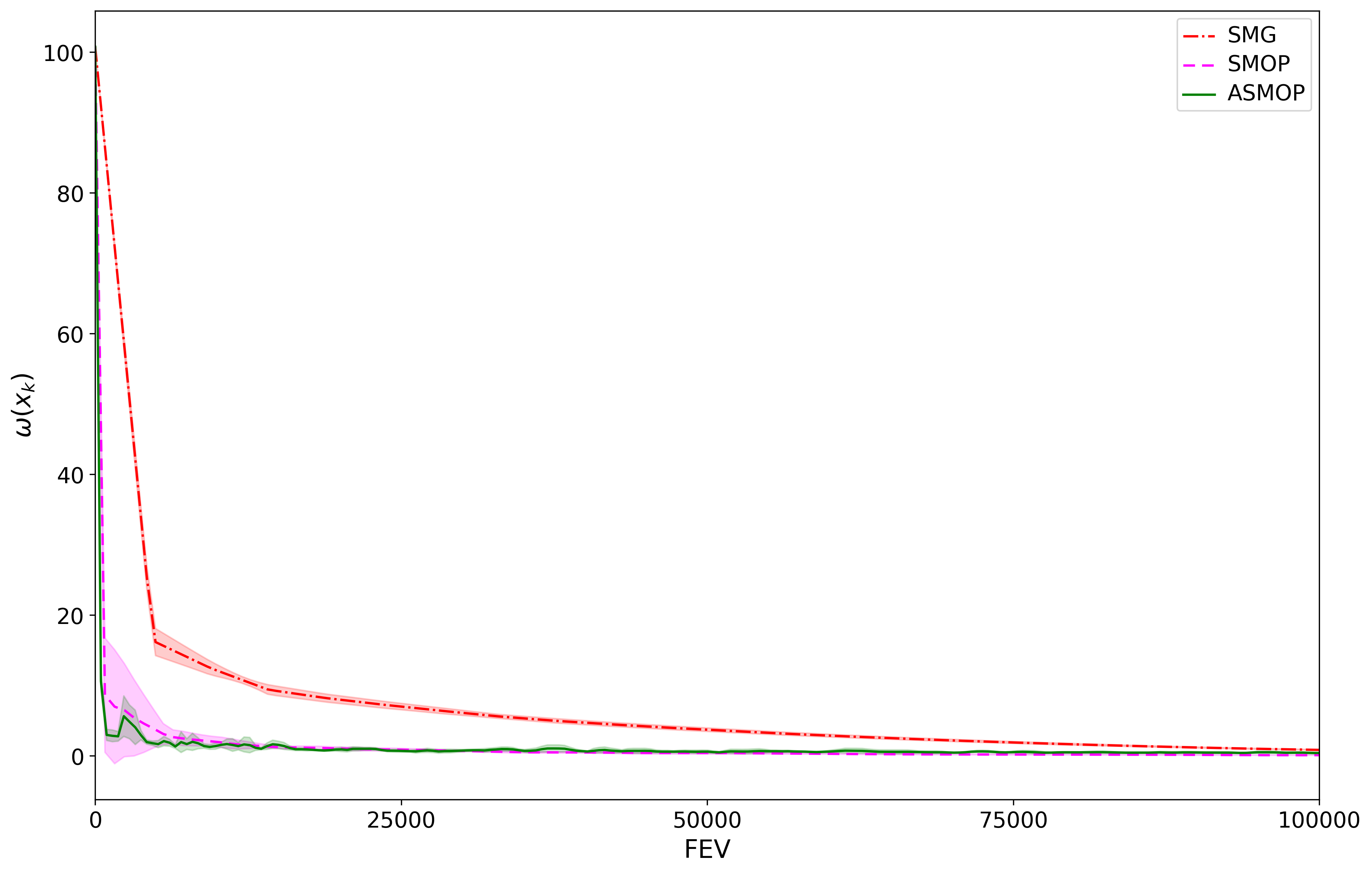}\quad
\includegraphics[width=5.7cm]{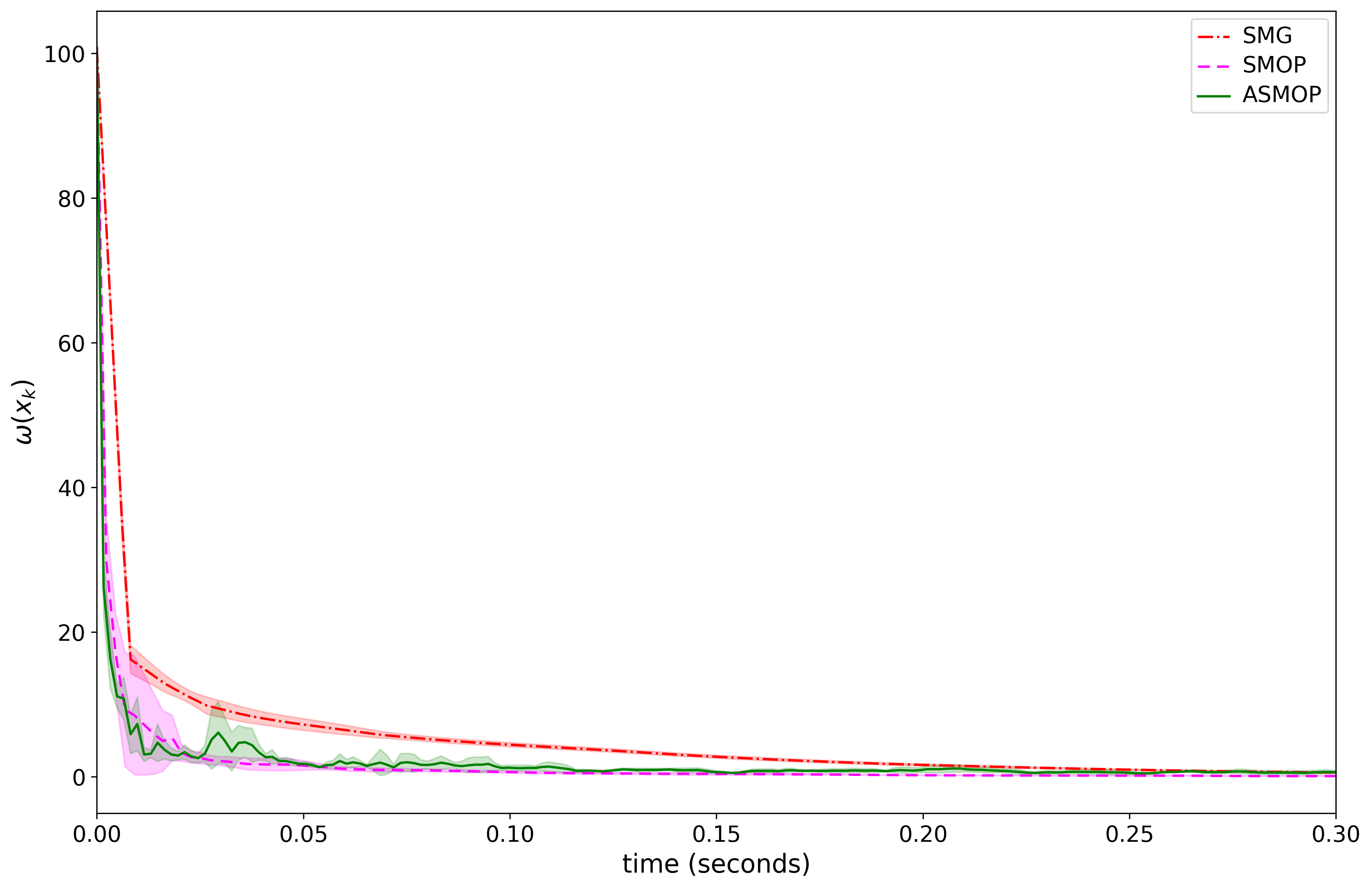}\\
\includegraphics[width=6.27cm]{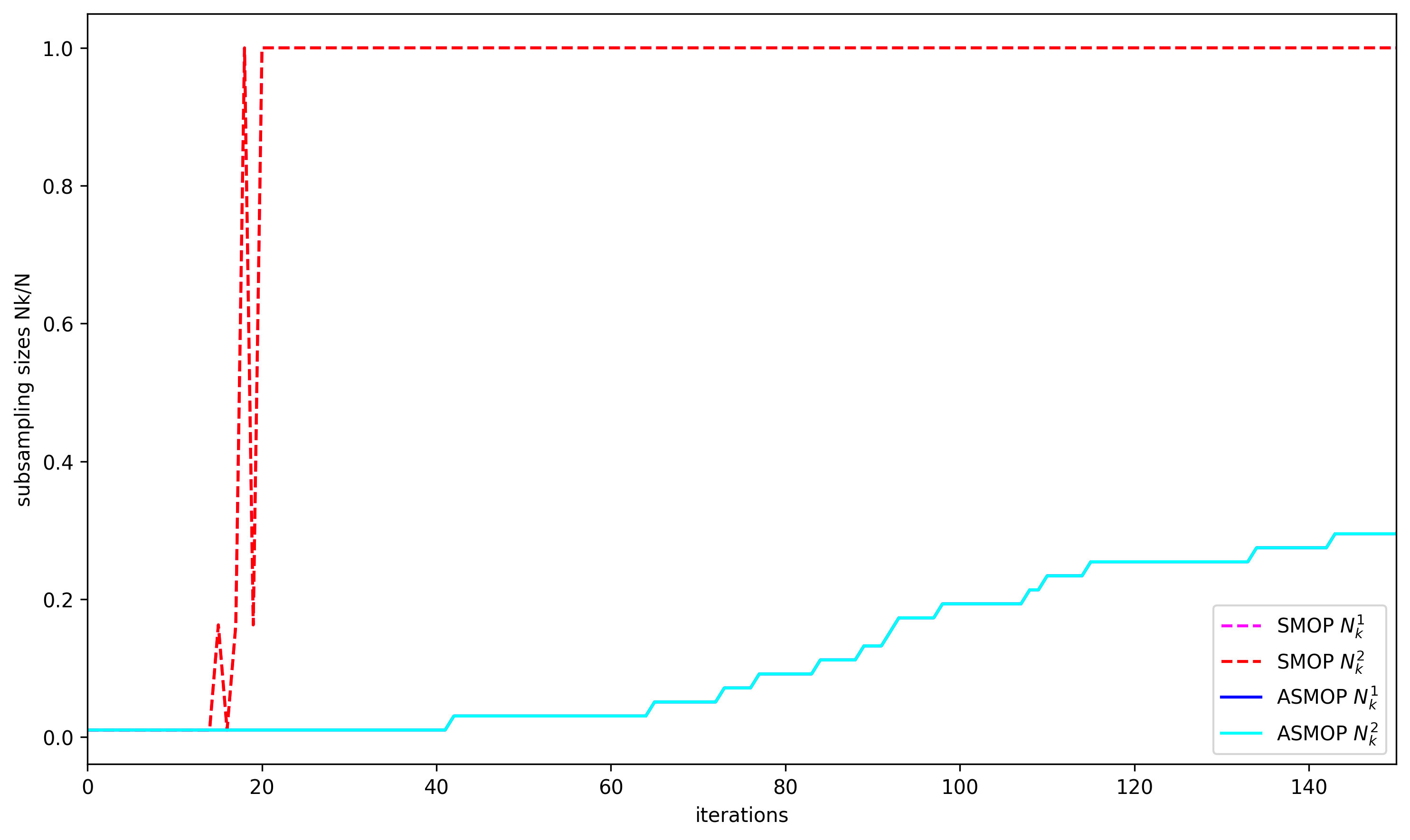}

\caption{{\footnotesize{MNIST-Fairness dataset, problem \eqref{logregf},  $N=10^4,n=1024$. First row: optimality measure against function evaluations (left) and optimality measure against time in second (left). Second row: sample sizes behavior. Parameters: $x_0=(0.1,0.1,...,0.1), \delta_0=1, \delta_{max}=8, \gamma_1=0.5, \gamma_2=2, \nu=10^{-4}, \eta=0.25,\varepsilon=10^{-4}.$}}}	
\label{mnistfairnessfig}
\end{figure}

\subsection{Regularized Nonconvex loss in 2-layer Neural Networks and  Least squares}

We have also tested how the ASMOP behaves when the criteria are two completely different loss functions. Once again we have used CIFAR10, MNIST, Fashion MNIST and MNIST-Fairness dataset for an image classification problem. The problem we are solving is \eqref{logreg}, however this time the component functions are
$$f^1(x)=\frac{1}{N}\sum_{j\in \mathcal{N}^1}\Big(1-\frac{1}{1+e^{(-y_j(x^Ta_j))}}\Big)^2+\frac{\lambda_1}{2}\|\hat{x}\|^2$$
and 
\begin{equation}
\label{mnst}
    f^2(x)=\frac{1}{N}\sum_{j\in \mathcal{N}^2}\frac{1}{2} (x^Ta_j-y_j)^2
\end{equation}
where $N$ is the size of the respective sample groups, $x\in \mathbb{R}^n$ is the vector of model coefficients, $\hat{x}$ coefficient vector without the intercept,  $a_j$ the attribute vector of the sample and $y_j$ its respective label. By minimizing this loss function, we get coefficients that are adjusted for both machine learning models. Specifically, for CIFAR10 it will use a regularized Nonconvex loss in 2-layer Neural Networks to differentiate images of cars and planes, and weighted least squares method to differentiate birds from cats, and analogously for the other three datasets considered. In the similar manner, we set the initial subsampling sizes $N_0^i=0.01N$, and the step size update to $\Delta N_k^i=0.02N$. The predetermined nonmonotonicity sequences were set to $t_k=\frac{1}{(k+1)^{1.51}}$ and $\overline{t}_k=\frac{100}{(k+1)^{1.51}}$ as in the previous experiment. The following figures show $\omega(x_k)$ values in terms of number of function evaluations for a fixed budget of $7\cdot10^6$ for CIFAR10, $10^6$ for MNIST, $10^7$ for Fashion MNIST and $10^5$ for MNIST-Fairness dataset. We also plot $\omega(x_k)$  against the execution time for a fixed budget of 100 seconds for CIFAR10, 5 seconds for MNIST, 25 seconds for Fashion MNIST and 2 seconds for MNIST-Fairness. In all Figures \ref{cifar2}, \ref{mnistfig2}, \ref{mnistfashionfig2} and \ref{mnistfairnessfig2} it can be seen that for the given budget ASMOP shows efficiency and a large decrease in $\omega(x_k)$ value for a small cost.
Also in this case, five runs are performed and the reported curves correspond to the average results for each method. In the plots of $\omega(x_k)$, the shaded region represents the Standard Deviation.

\begin{figure}[H]
\centering
\includegraphics[width=5.7cm]{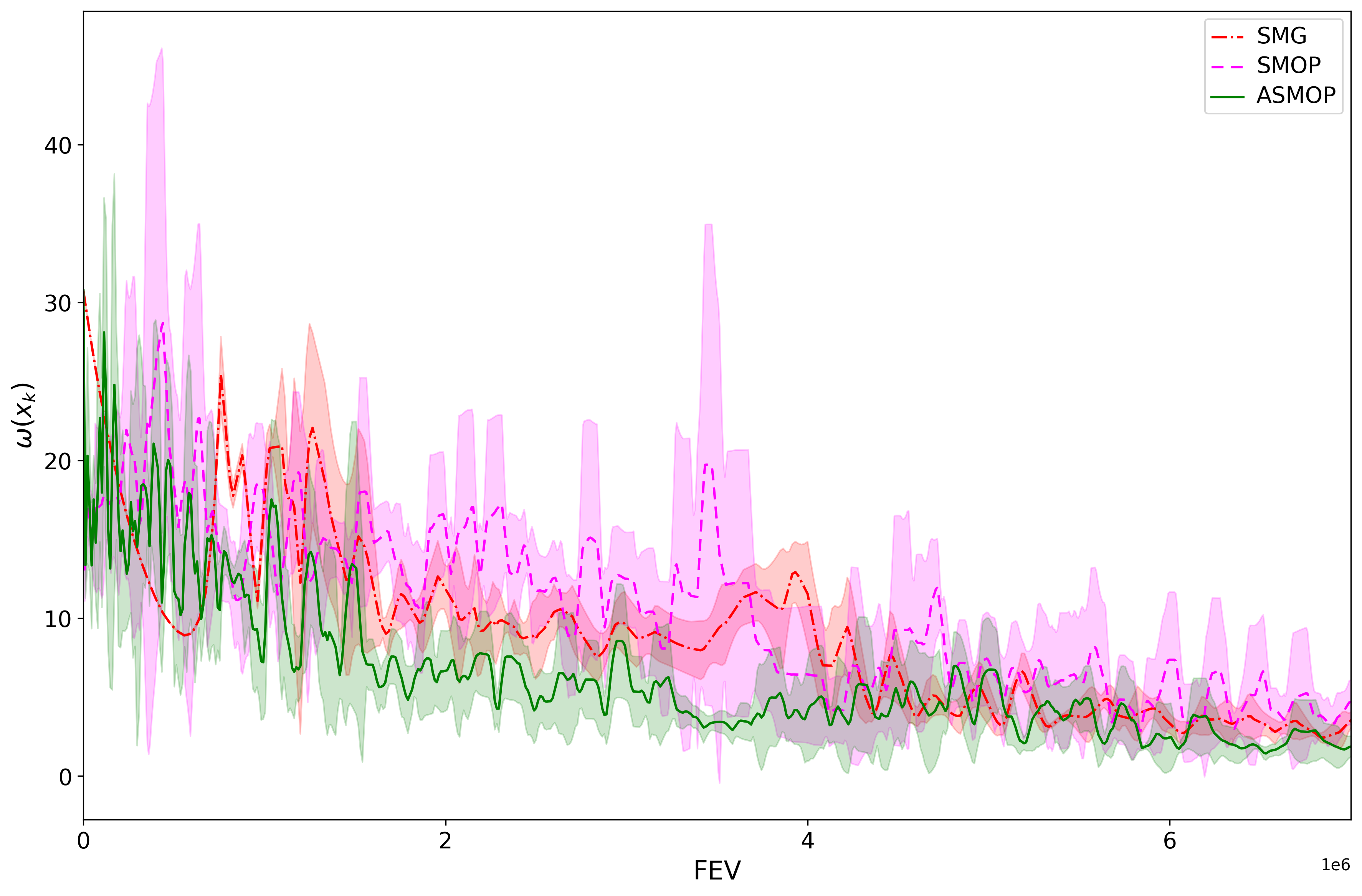}\quad
\includegraphics[width=5.7cm]{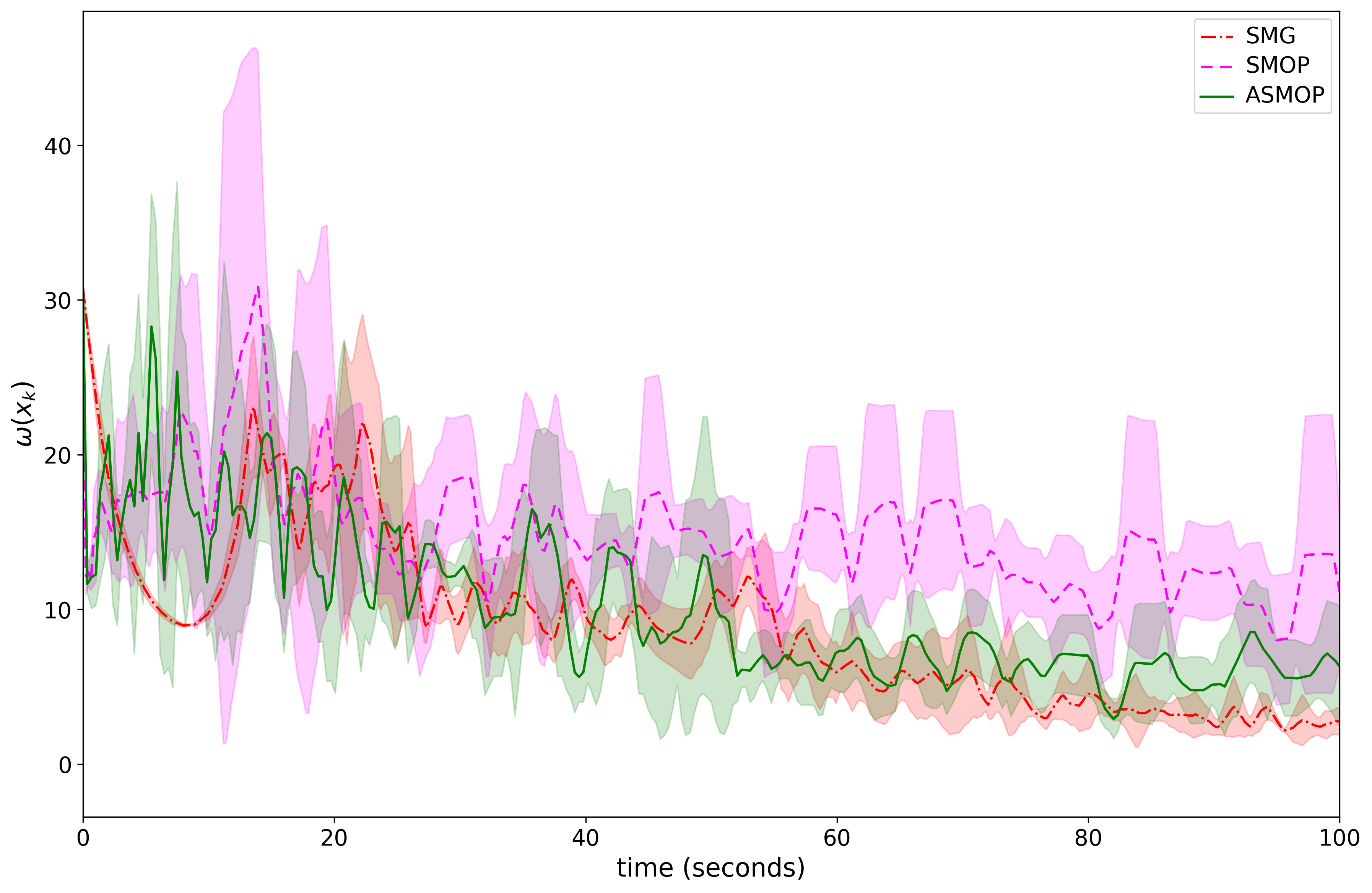}\\
\includegraphics[width=6.27cm]{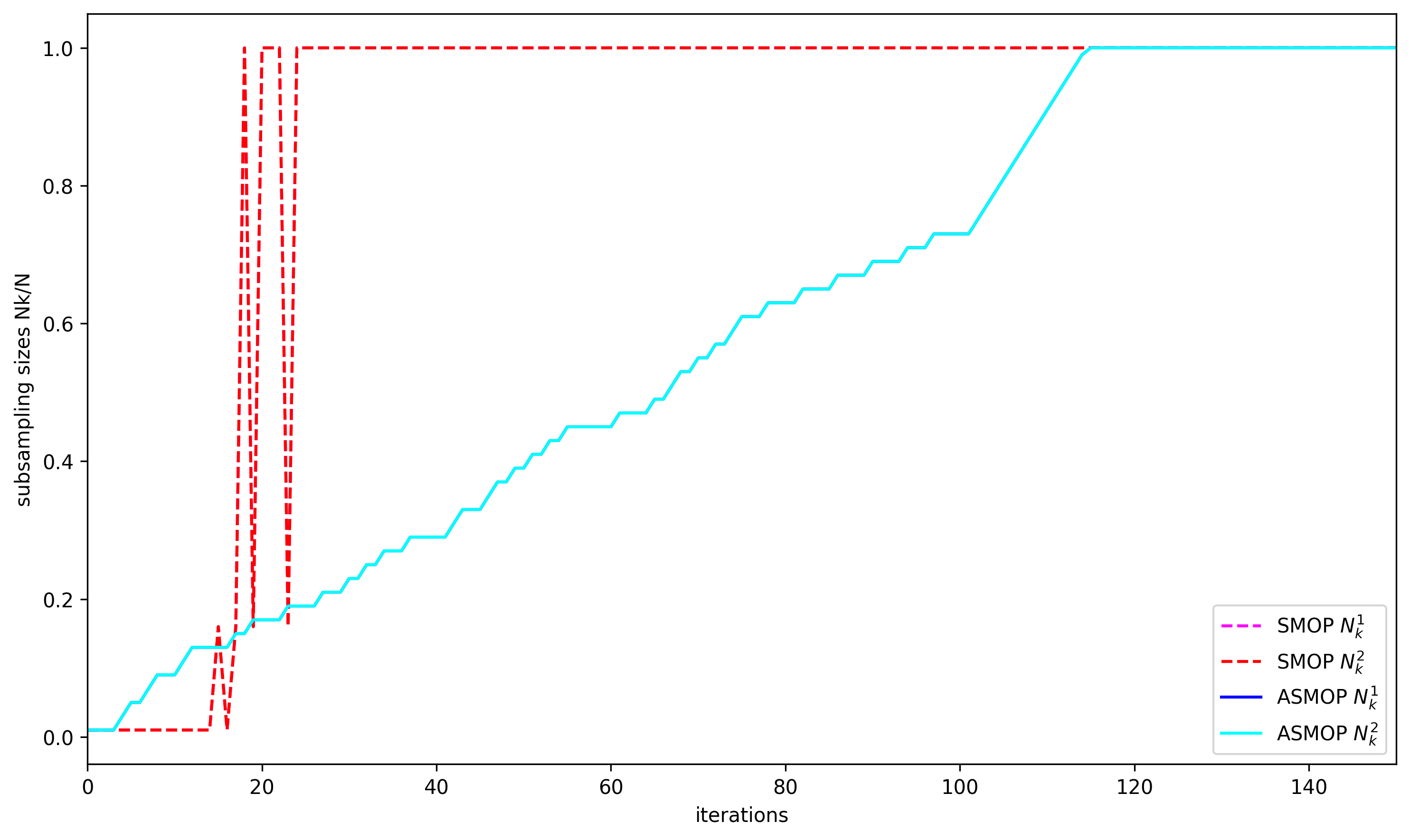}
\caption{{\footnotesize{CIFAR10 dataset, problem \eqref{mnst}, $N=10^4,n=3072$. First row: optimality measure against function evaluations (left) and optimality measure against time in second (left). Second row: sample sizes behavior. Parameters: $x_0=(0.1,0.1,...,0.1), \delta_0=1, \delta_{max}=8, \gamma_1=0.5, \gamma_2=2, \nu=10^{-5}, \eta=0.25,\varepsilon=10^{-5}.$}}}	
\label{cifar2}
\end{figure}

\begin{figure}[H]
\centering
\includegraphics[width=5.7cm]{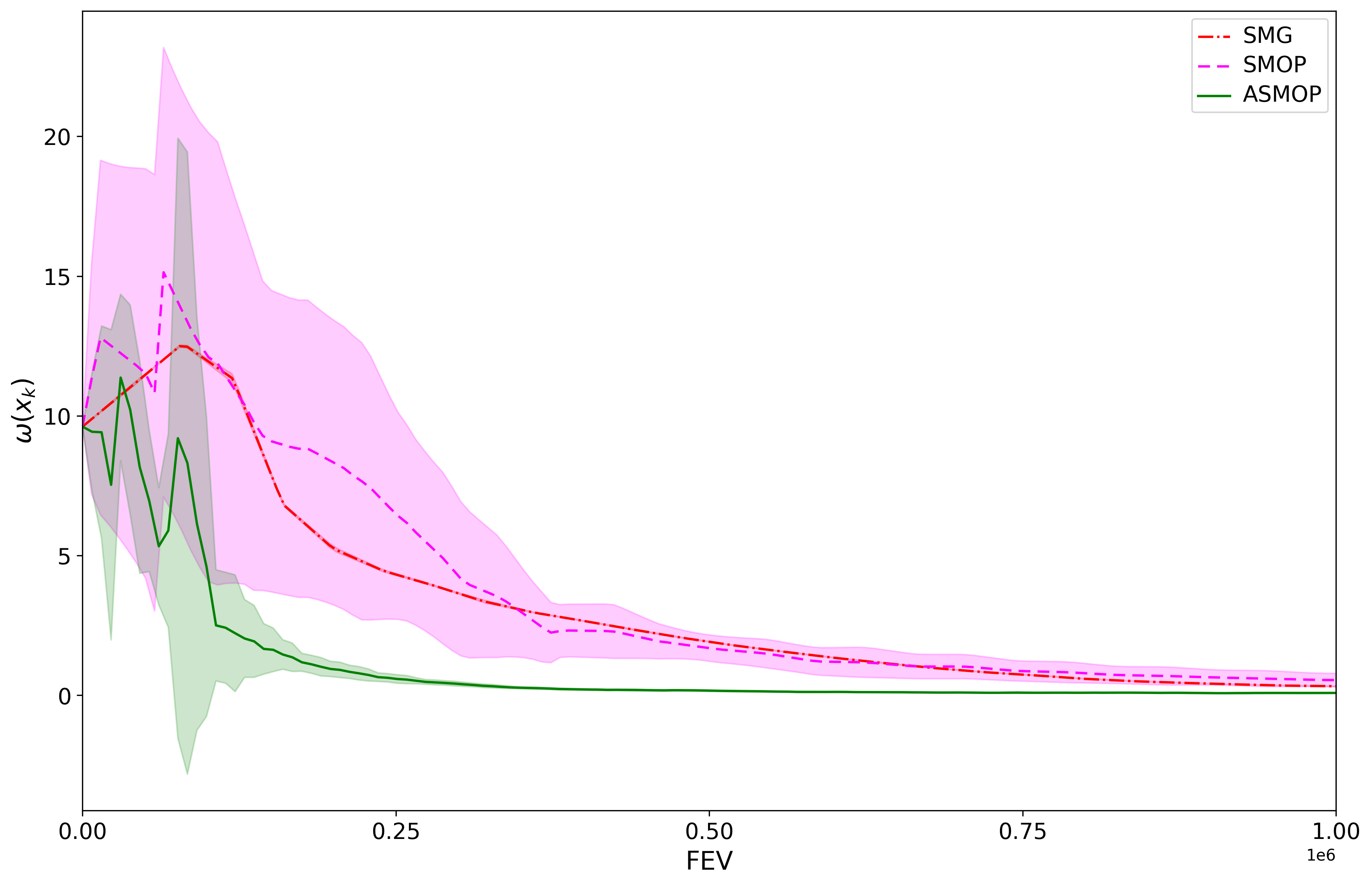}\quad
\includegraphics[width=5.7cm]{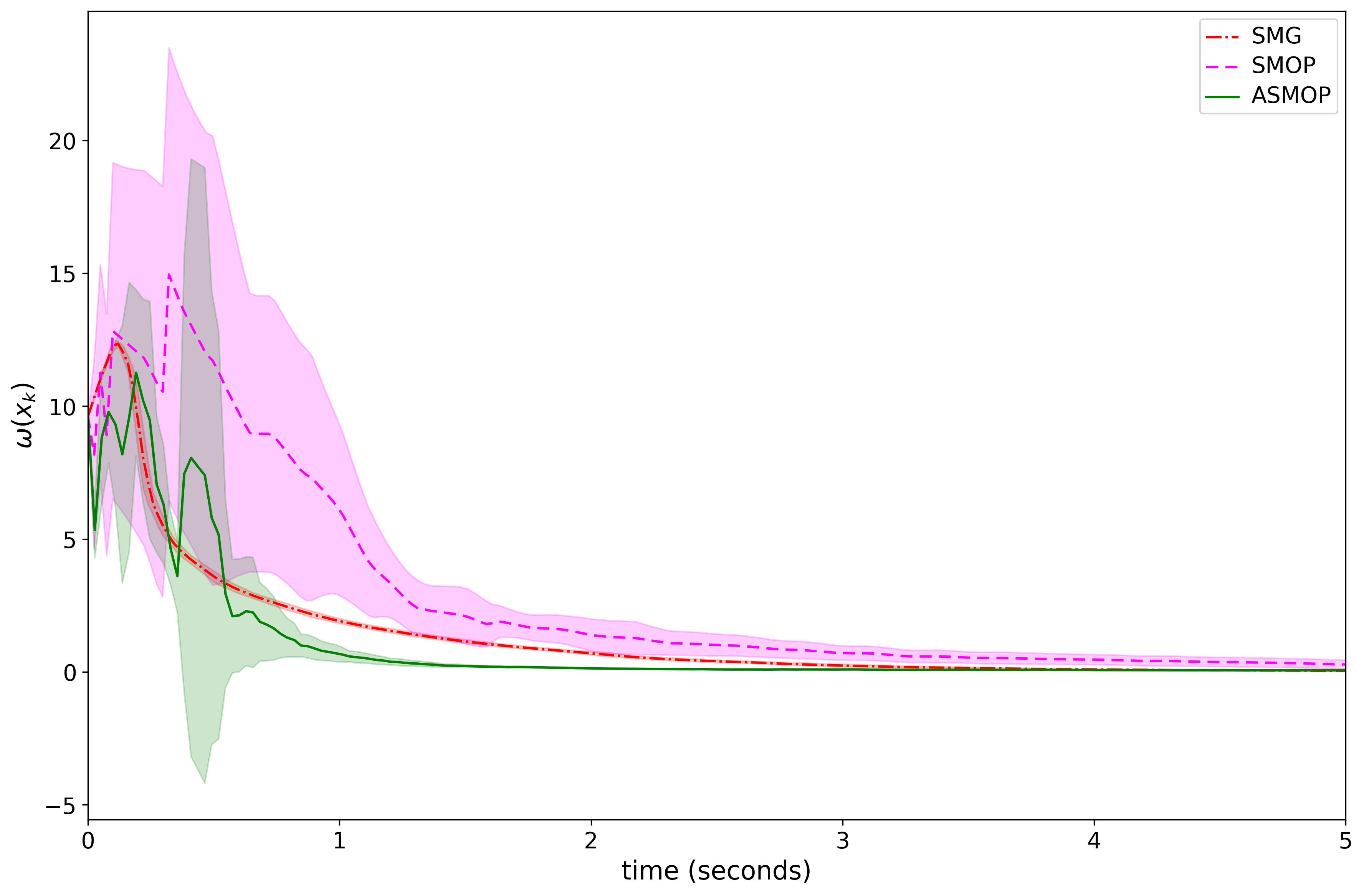}\\
\includegraphics[width=6.27cm]{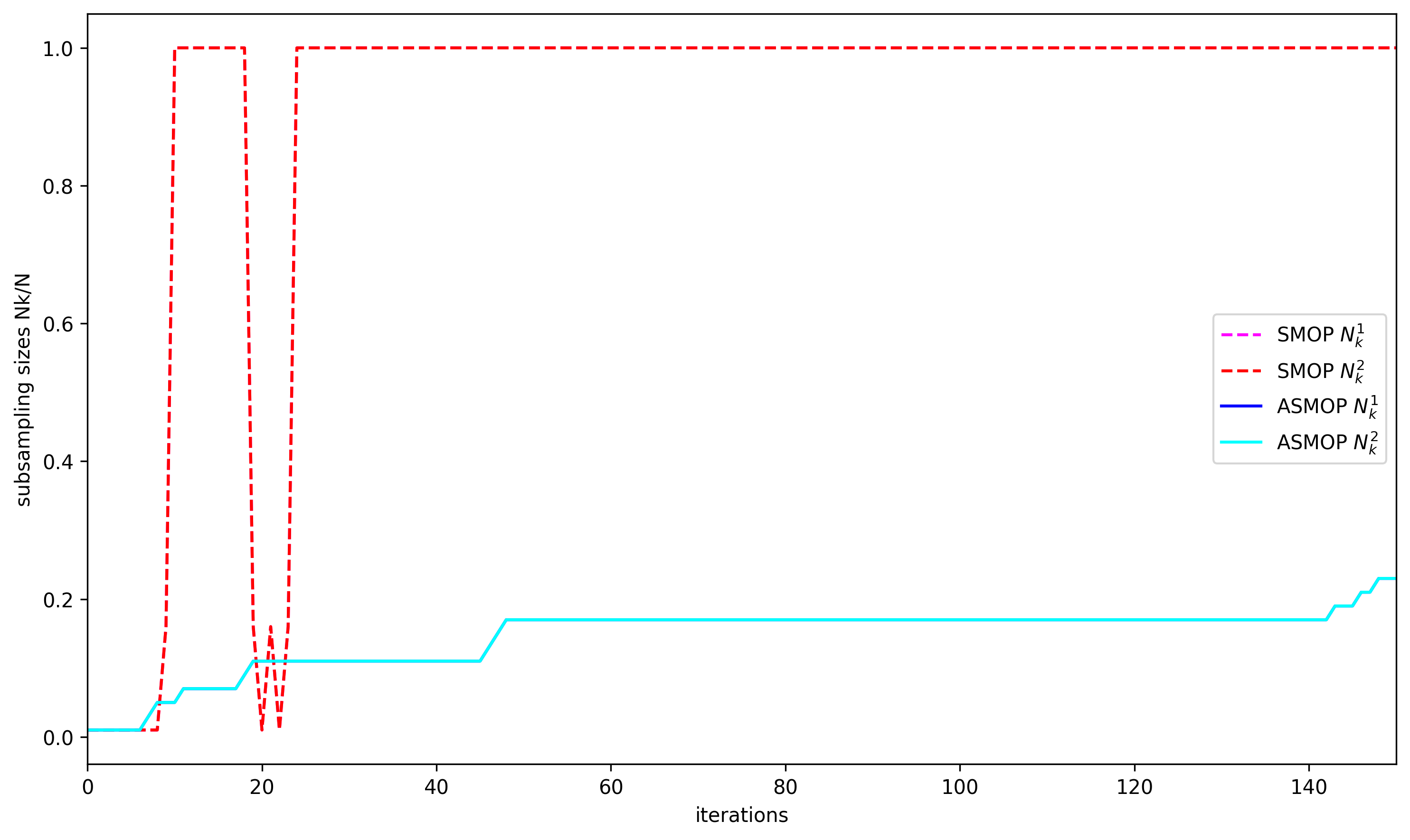}

\caption{{\footnotesize{MNIST dataset, problem \eqref{mnst},  $N=10^4,n=1024$. First row: optimality measure against function evaluations (left) and optimality measure against time in second (left). Second row: sample sizes behavior. Parameters: $x_0=(0.1,0.1,...,0.1), \delta_0=1, \delta_{max}=8, \gamma_1=0.5, \gamma_2=2, \nu=10^{-5}, \eta=0.05,\varepsilon=10^{-5}.$}}}	
\label{mnistfig2}
\end{figure}

\begin{figure}[H]
\centering
\includegraphics[width=5.7cm]{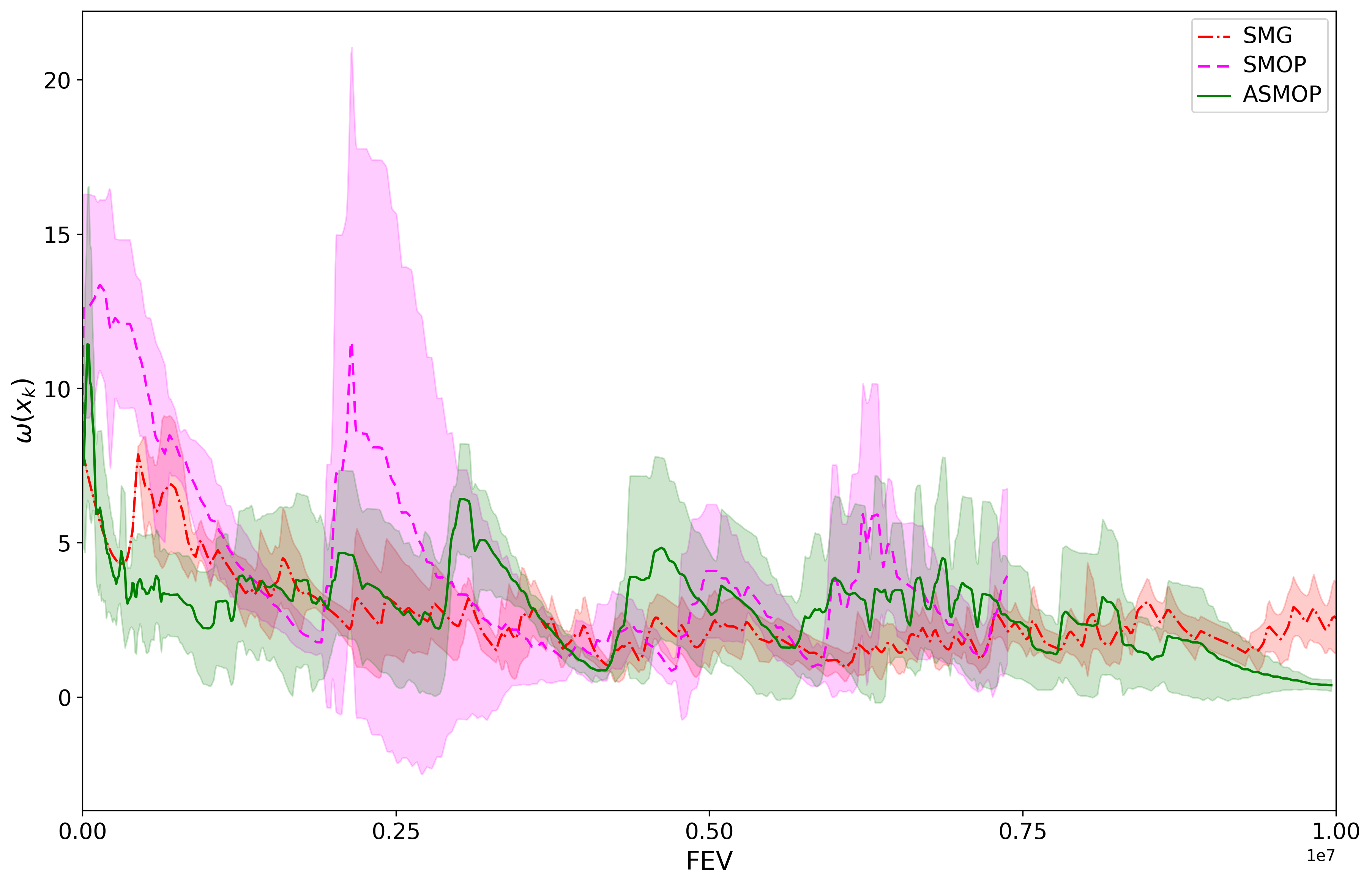}\quad
\includegraphics[width=5.7cm]{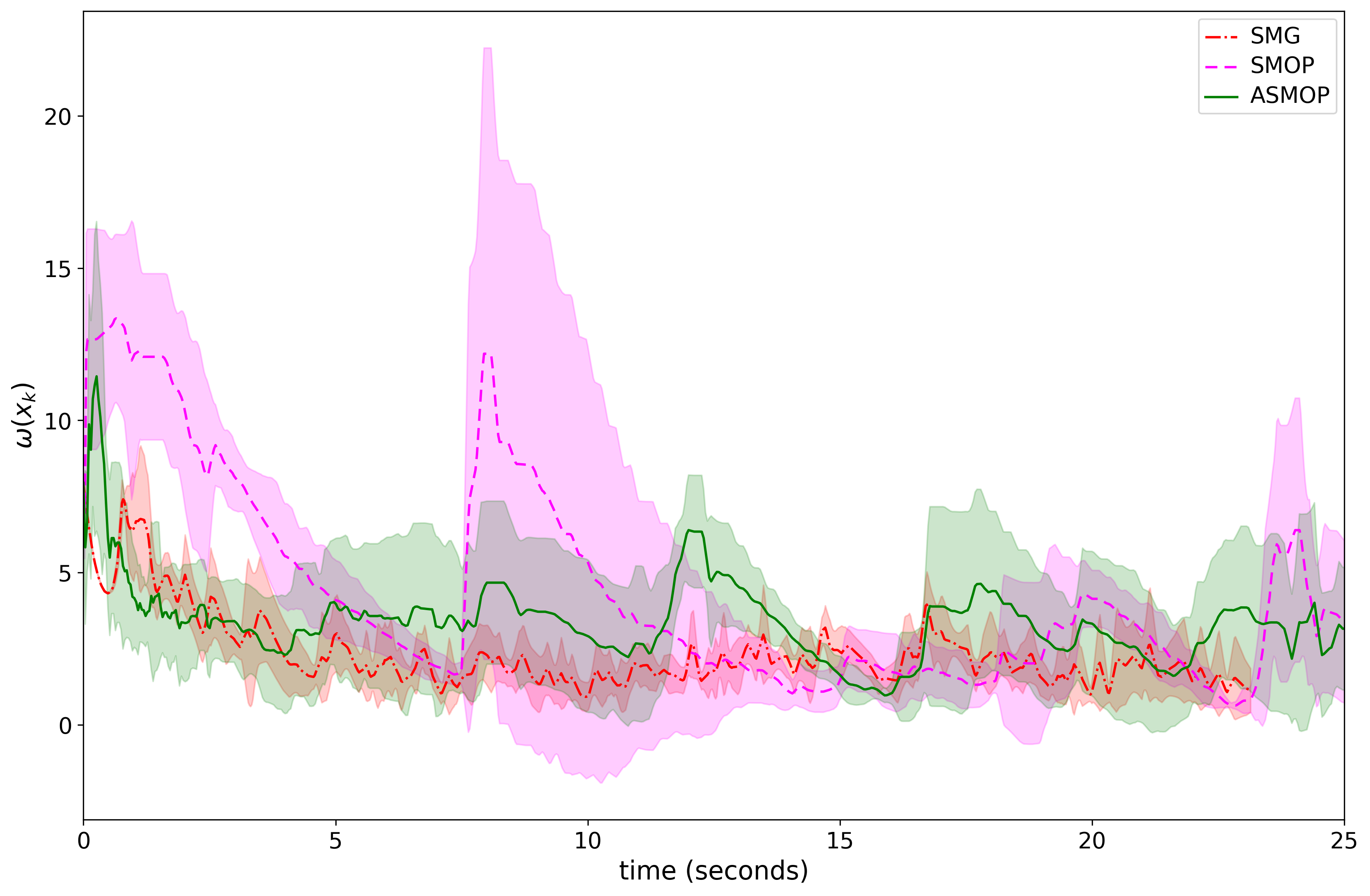}\\
\includegraphics[width=6.27cm]{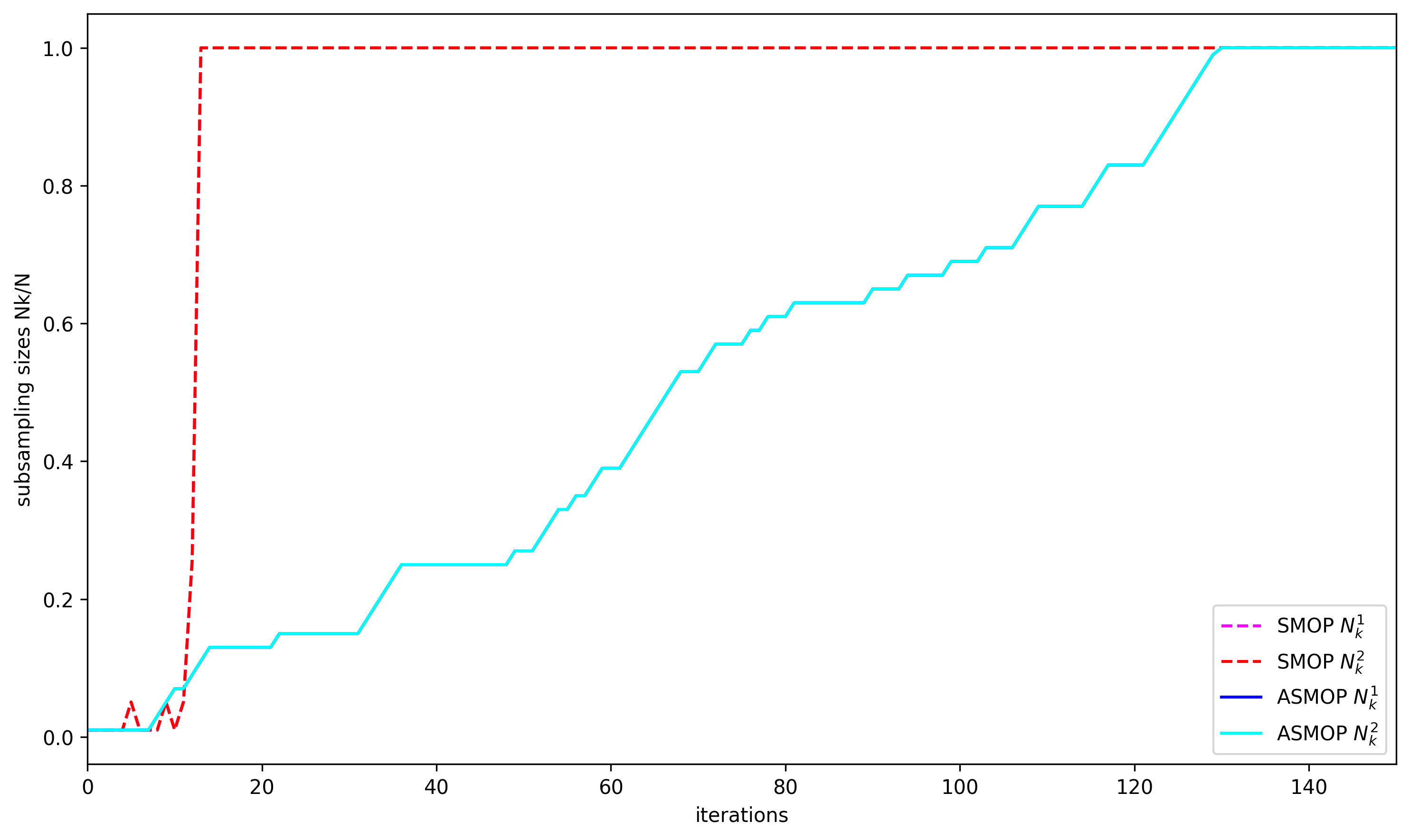}

\caption{{\footnotesize{Fashion MNIST dataset, problem \eqref{mnst},  $N=10^4,n=1024$. First row: optimality measure against function evaluations (left) and optimality measure against time in second (left). Second row: sample sizes behavior. Parameters: $x_0=(0.1,0.1,...,0.1), \delta_0=1, \delta_{max}=8, \gamma_1=0.6, \gamma_2=1.5, \nu=10^{-5}, \eta=0.01,\varepsilon=10^{-4}.$}}}	
\label{mnistfashionfig2}
\end{figure}

\begin{figure}[H]
\centering
\includegraphics[width=5.7cm]{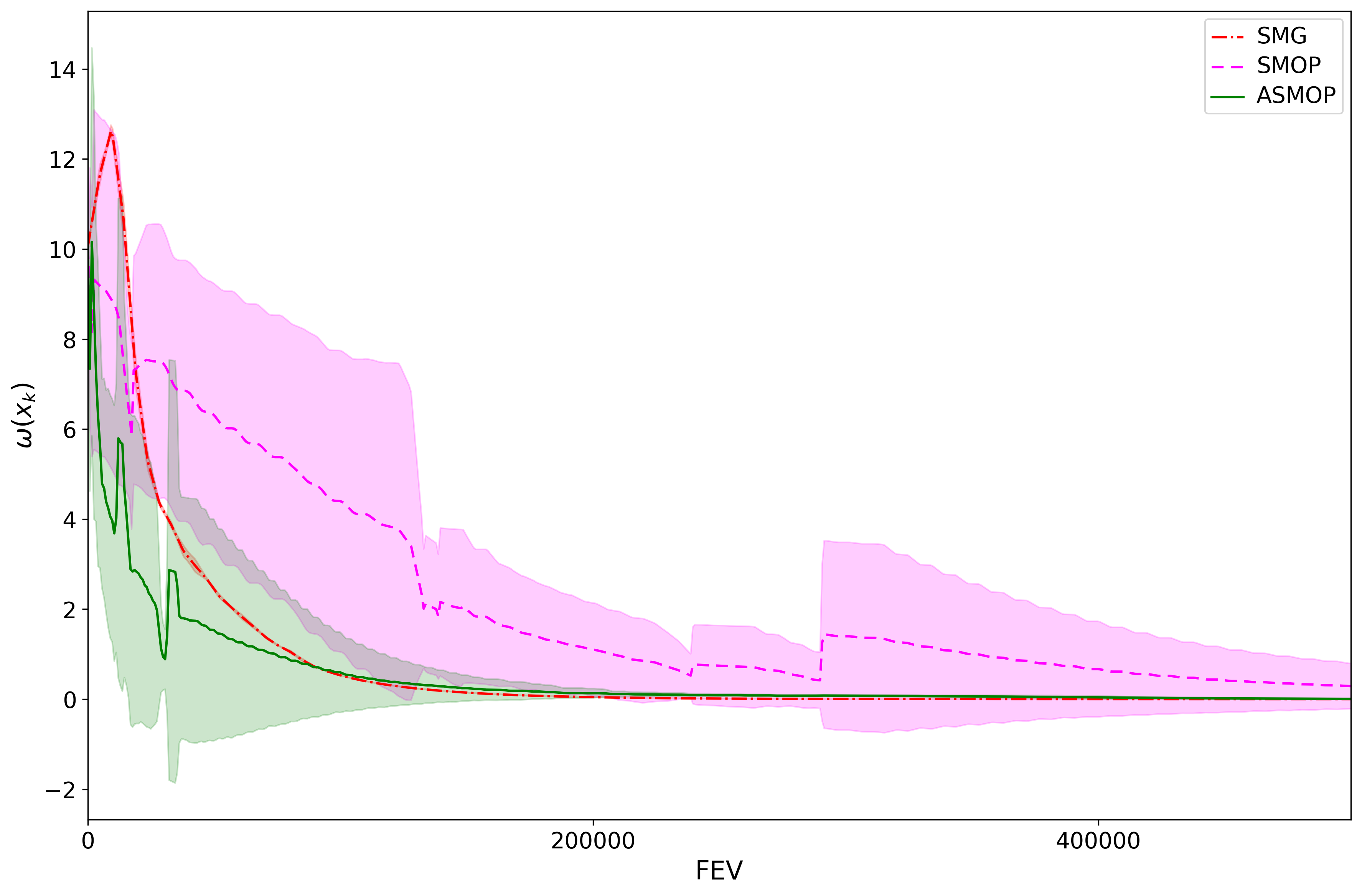}\quad
\includegraphics[width=5.7cm]{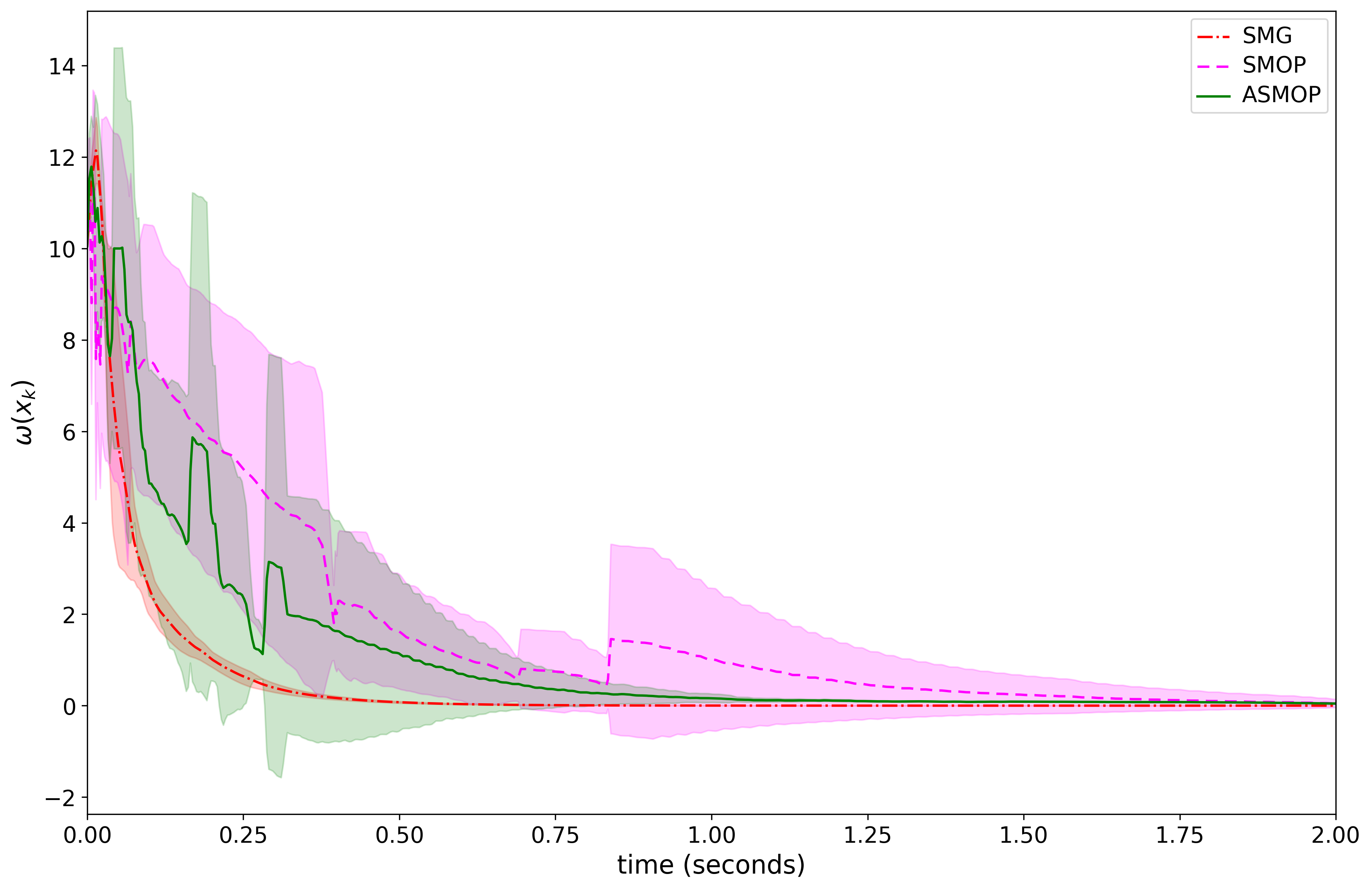}\\
\includegraphics[width=6.27cm]{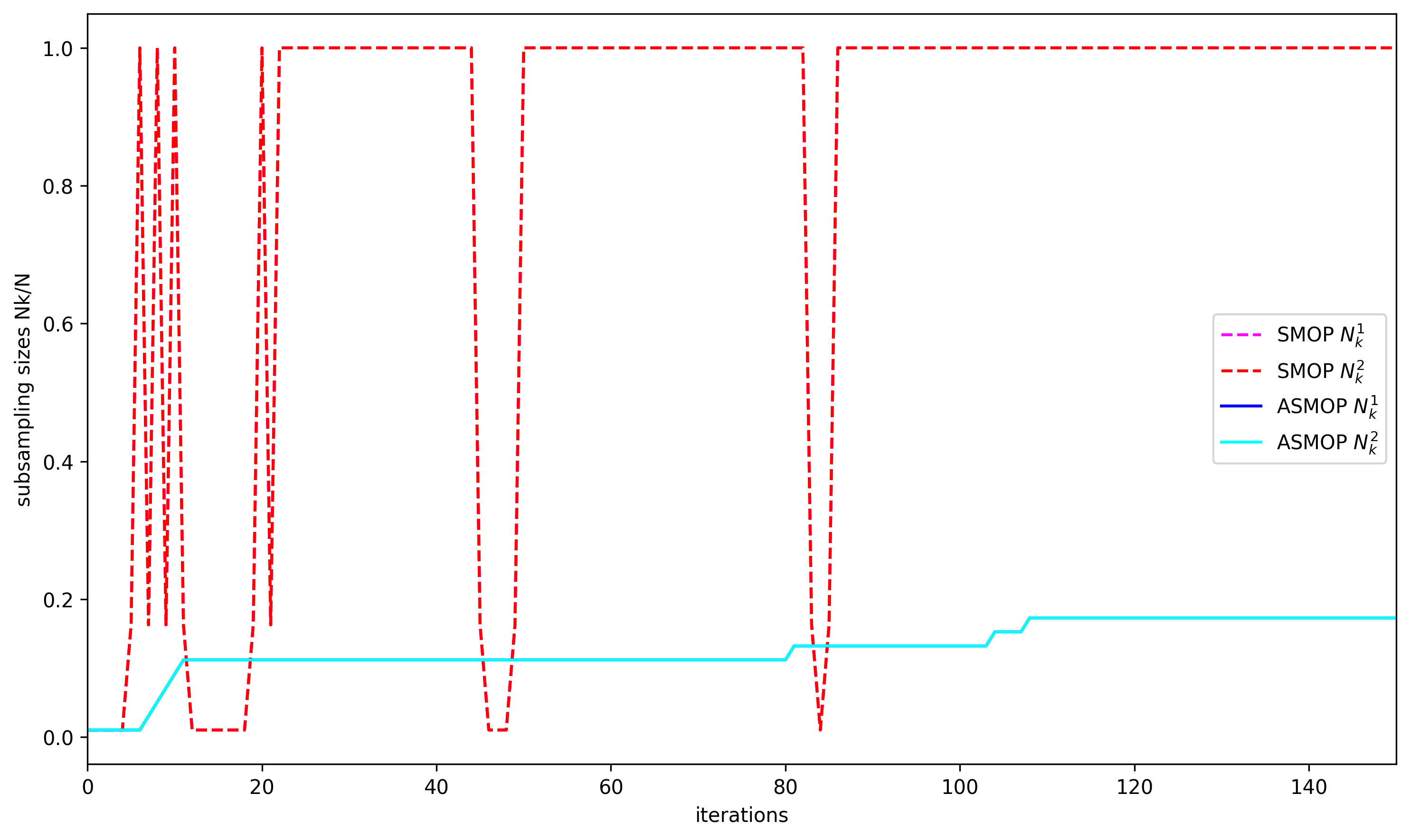}

\caption{{\footnotesize{MNIST-Fairness dataset, problem \eqref{mnst},  $N=10^4,n=1024$. First row: optimality measure against function evaluations (left) and optimality measure against time in second (left). Second row: sample sizes behavior. Parameters: $x_0=(0.1,0.1,...,0.1), \delta_0=1, \delta_{max}=8, \gamma_1=0.5, \gamma_2=2, \nu=10^{-5}, \eta=0.05,\varepsilon=10^{-5}.$}}}	
\label{mnistfairnessfig2}
\end{figure}

\subsection{Nonmonotonicity parameters and increasing rule}
In the previous experiments, we set parameters $t_k$ as $\frac{1}{(k+1)^{1.51}}$ and $\overline{t}_k$ as $\frac{100}{(k+1)^{1.51}}$ . By adjusting these settings it is possible to increase or decrease the tolerance of the nonmonotonicity, which leads to different algorithm behavior. We set
$$\overline{t}_k=\frac{C_2}{(k+1)^{1.51}}$$
and tested three different scenarios ($C_2\in \{1, 100, 10000\}$) in order to see how the relaxation of the condition $\rho_{\cald}>\nu$ impacts the performance. The following table shows the chosen settings of the compared algorithms for MNIST dataset, whereas the rest of the parameters were set as in the second experiment. 
\begin{table}[H]
\begin{center}
    
\begin{tabular}{|c|c|c|}
\hline
     (MNIST)  &  $C_2$ \\ \hline
ASMOP$\_$1 &  $10000$     \\ \hline
ASMOP$\_$2 &  $1$     \\ \hline
ASMOP$\_$3 &  $100$       \\ \hline
\end{tabular}
\caption{\footnotesize{MNIST dataset. Different nonmonotonicity settings for ASMOP versions.}}
\label{tab1}
\end{center}

\end{table}
We compared these three algorithms similarly as in previous experiments, by criticality measure $\omega(x_k)$ in terms of function evaluations and time in seconds. The problem being solved is \eqref{logregf}.
Like the previous cases, five runs are performed and the reported curves correspond to the average results for each method. In the plots of $\omega(x_k)$, the shaded region represents the Standard Deviation.

\begin{figure}[H]
\centering
\includegraphics[width=5.7cm]{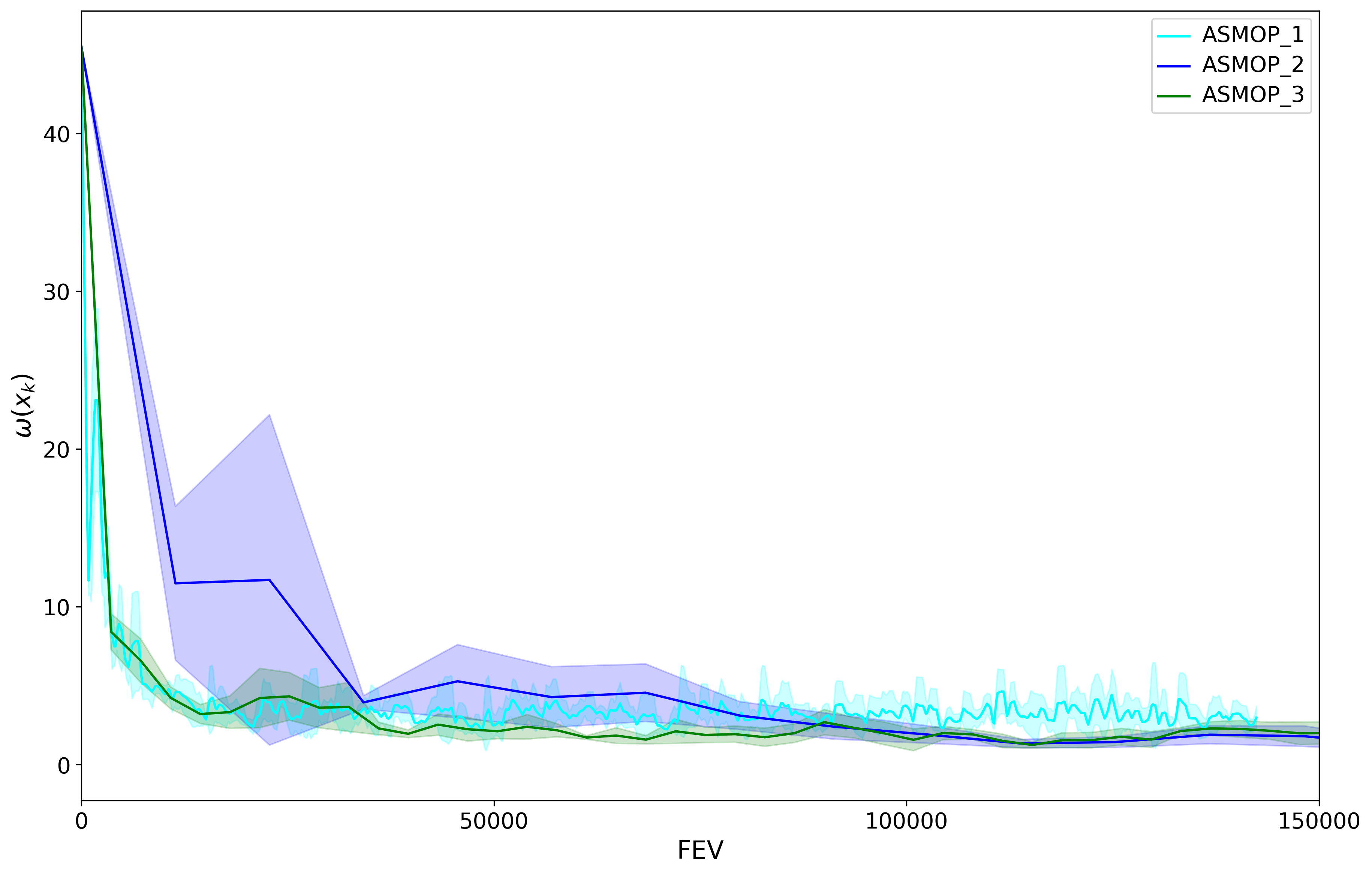}\quad
\includegraphics[width=5.7cm]{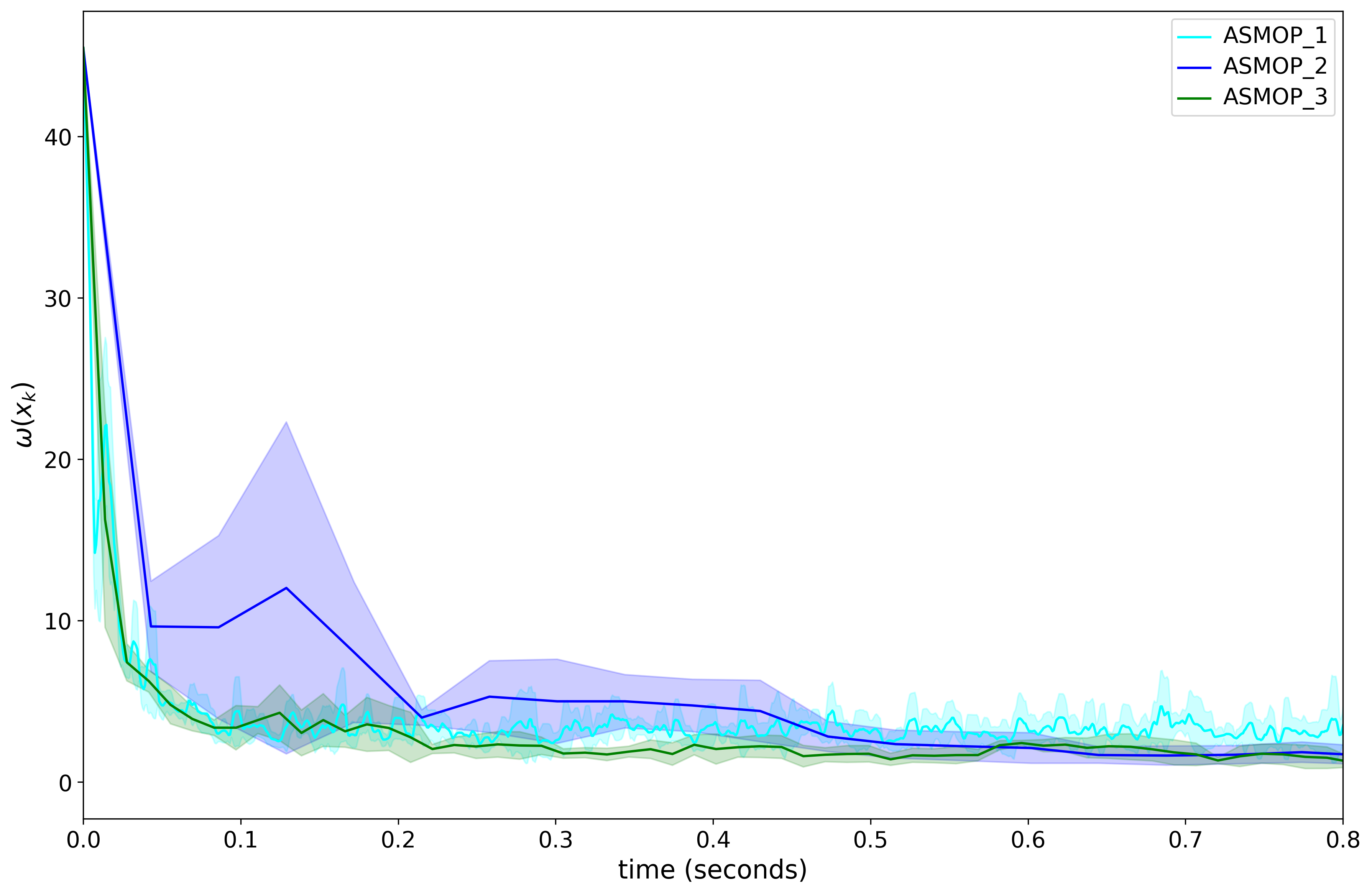}\\
\includegraphics[width=6.27cm]{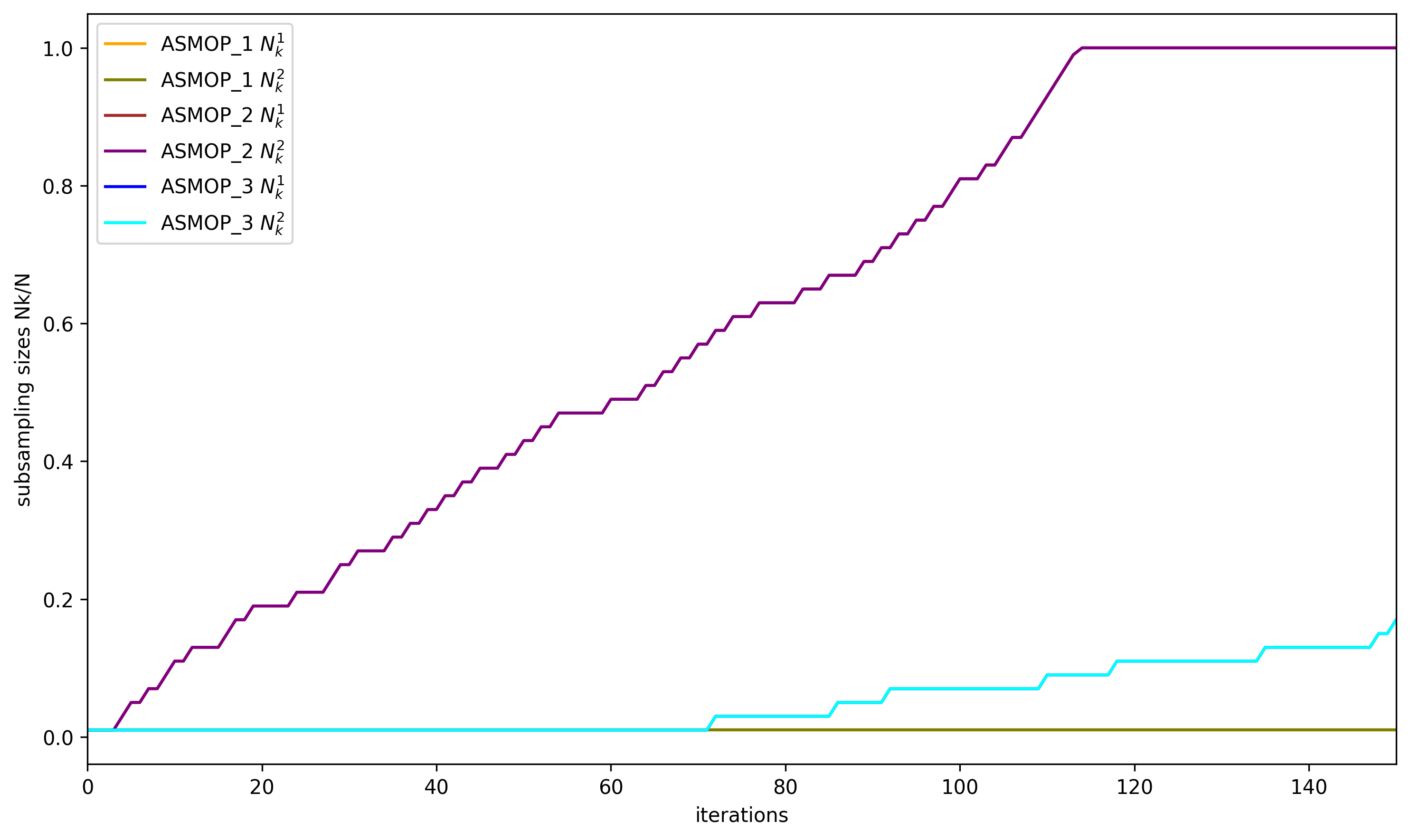}

\caption{{\footnotesize{MNIST dataset, problem \eqref{logregf} different settings Table \ref{tab1}, $N=10^4, n=1024$. Optimality measure against computational cost. Parameters: $x_0=(0.1,0.1,...,0.1), \delta_0=1, \delta_{max}=8, \gamma_1=0.5, \gamma_2=2, \nu=10^{-4}, \eta=0.25,\varepsilon=10^{-4}.$}}}	
\label{mncomp}
\end{figure}

It is noticeable that the subsampling sizes are increased less frequently for the versions that have a more relaxed coefficient $\rho_{\cald}$, which means that the higher tolerance leads to the condition $\rho_{\cald}>\nu$ being satisfied more often in Step 2 of the algorithm. 

We now investigate the influence of different increasing rules for the subsampling sizes. In the previous experiments, the subsampling size was updated like $\Delta N_k^i=0.02N$.

In order to analyze how the growth strategy of the subsampling size affects the behavior of the method, we consider three different increasing rules. The third one corresponds to the rule used in the previous experiments, while the other two introduce alternative strategies for enlarging the sample size during the iterations.

More precisely, we compare the following three variants of the algorithm, each associated with a different increasing rule for the subsampling sizes for MNIST dataset.

\begin{table}[H]
\begin{center}
\begin{tabular}{|c|c|}
\hline
(MNIST) & Increasing rule \\ \hline
ASMOP\_+1 & $\Delta N_k^i=1$  and $C_2=1$\\ \hline
ASMOP\_2\% & $\Delta N_k^i=0.02N$ and $C_2=1$ \\ \hline
ASMOP\_2\%\_RELAX & $\Delta N_k^i=0.02N$ and $C_2=100$ \\ \hline
\end{tabular}
\caption{\footnotesize{MNIST dataset. Different increasing rules for the subsampling size.}}
\label{tab2}
\end{center}
\end{table}

As in the previous experiments, the algorithms are compared using the criticality measure $\omega(x_k)$ with respect to the number of function evaluations and the execution time. The problem being solved is again \eqref{logregf}, and all the parameters are kept the same as in the previous experiment unless otherwise specified.

\begin{figure}[H]
\centering
\includegraphics[width=5.7cm]{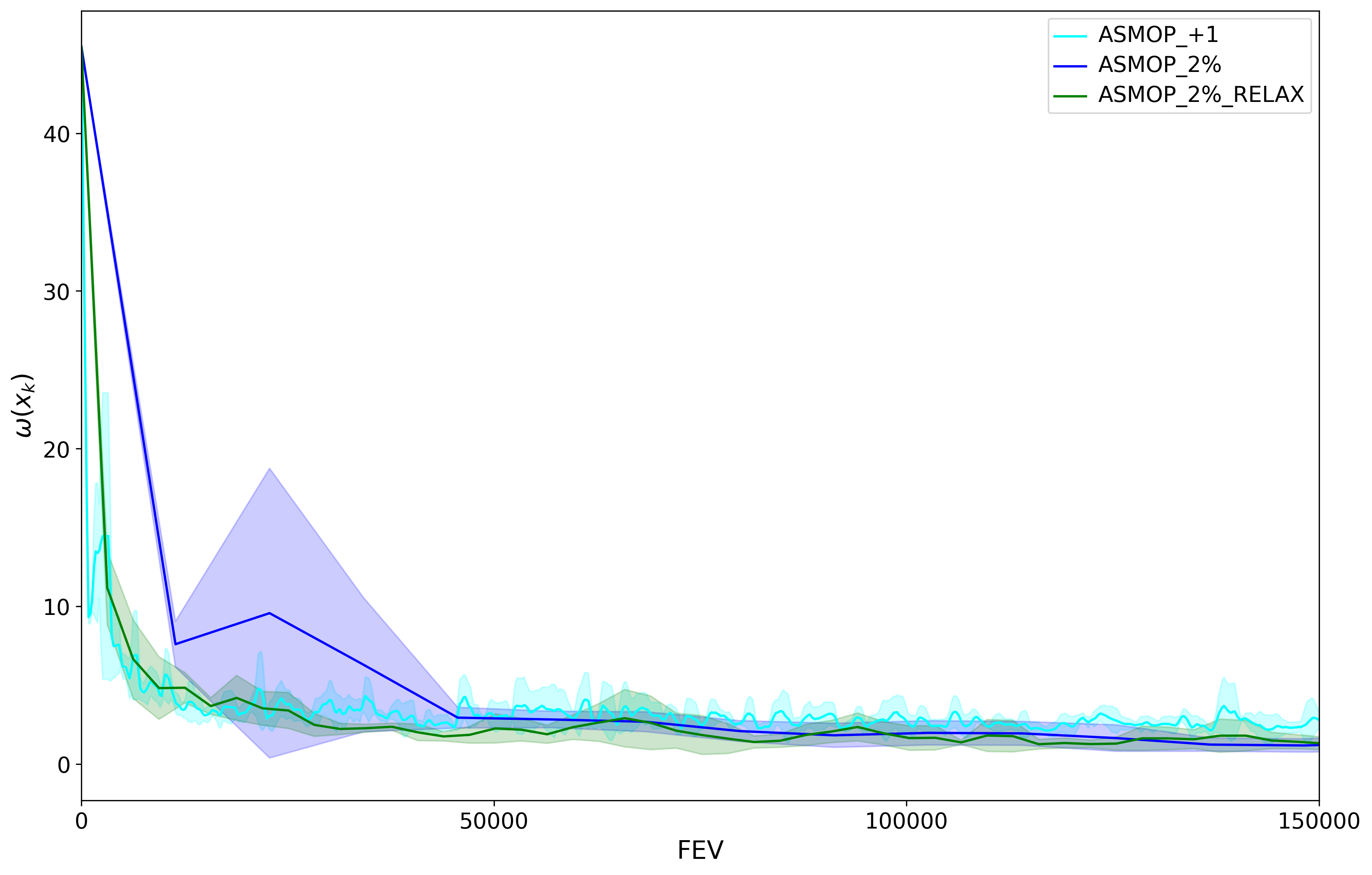}\quad
\includegraphics[width=5.7cm]{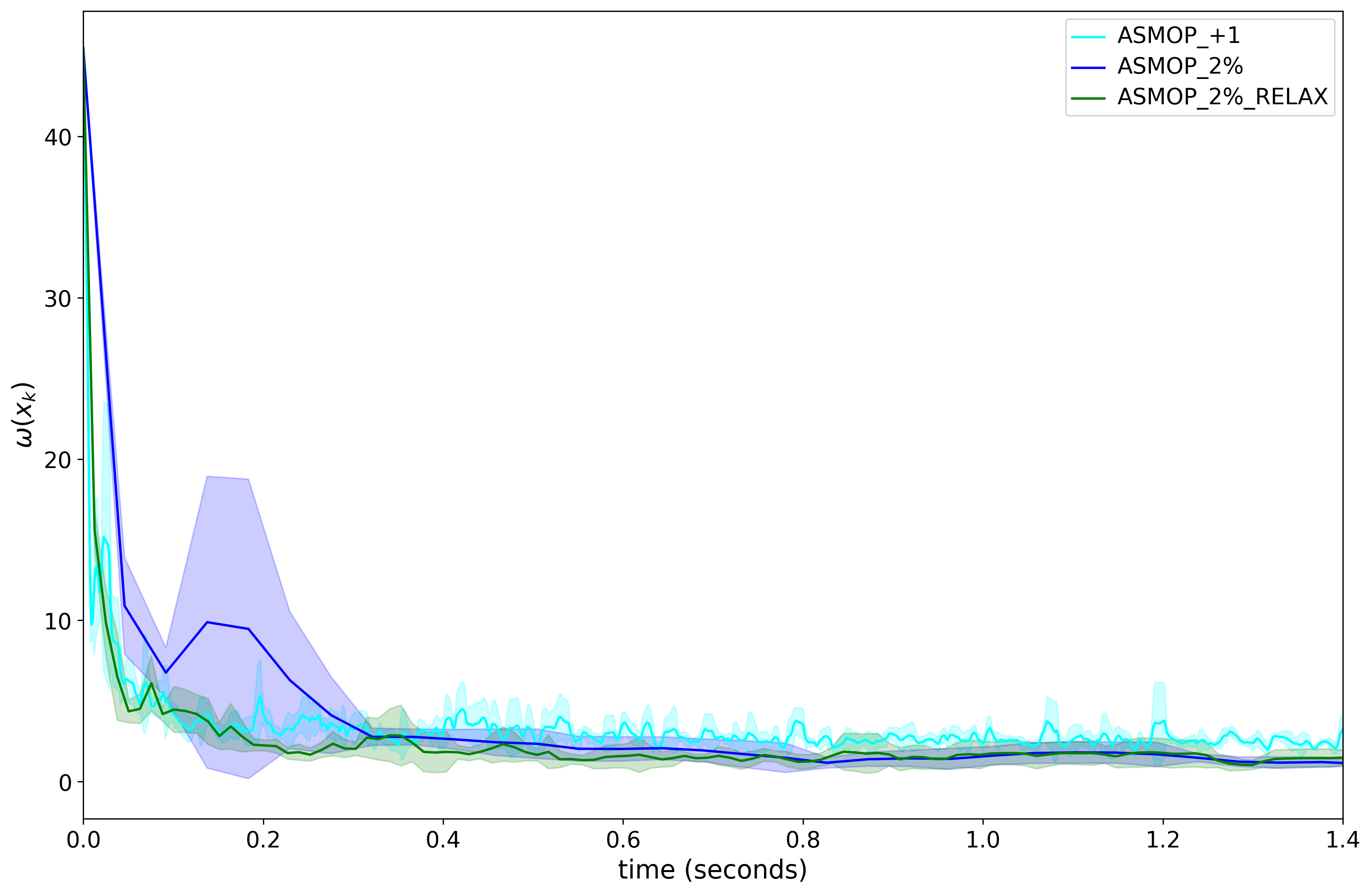}\\
\includegraphics[width=6.27cm]{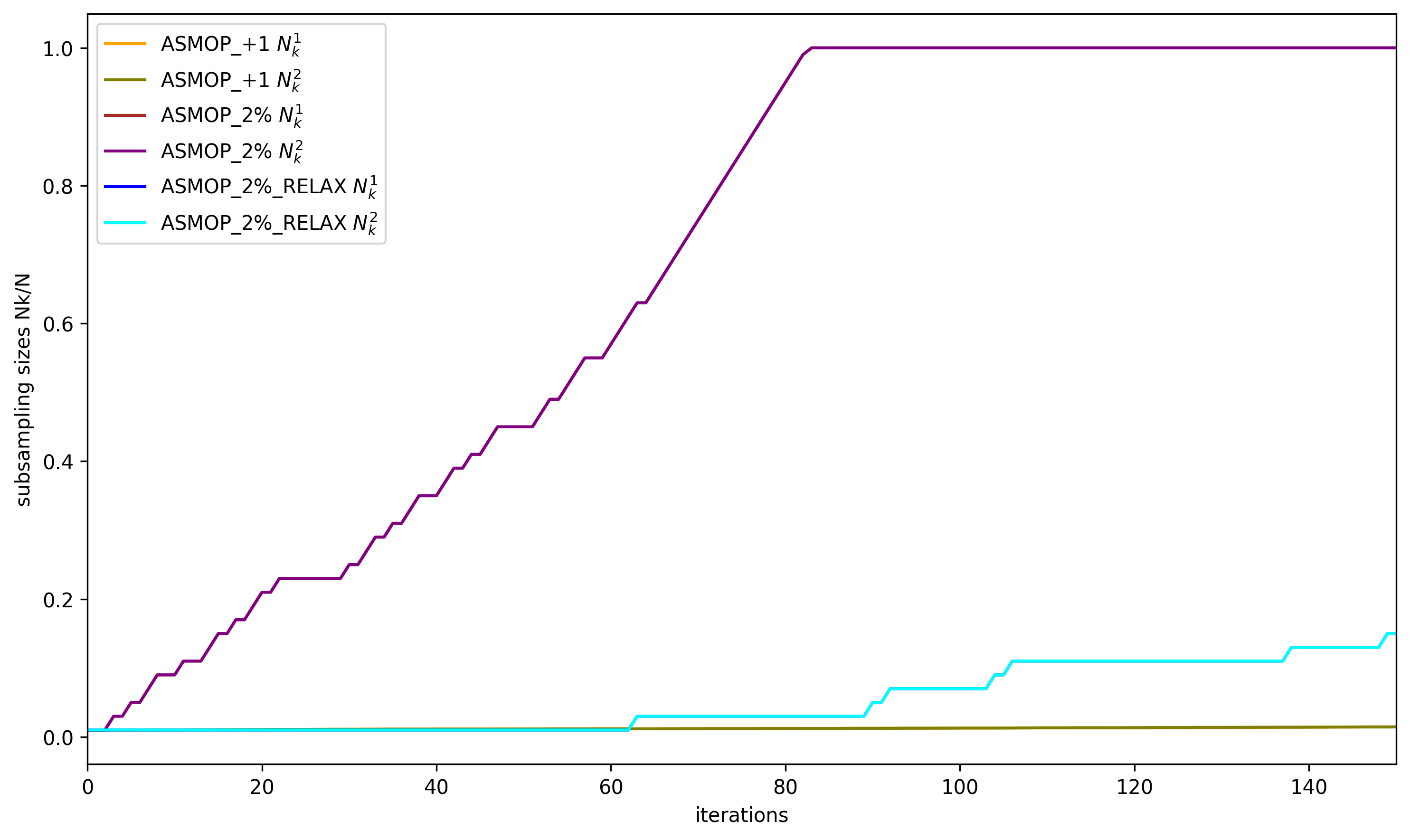}

\caption{{\footnotesize{MNIST dataset, problem \eqref{logregf} different settings Table \ref{tab2}, $N=10^4, n=1024$. Optimality measure against computational cost. Parameters: $x_0=(0.1,0.1,...,0.1), \delta_0=1, \delta_{max}=8, \gamma_1=0.5, \gamma_2=2, \nu=10^{-4}, \eta=0.25,\varepsilon=10^{-4}.$}}}	
\label{mnincrease}
\end{figure}

It can be observed that the different increasing rules lead to significantly different growth patterns of the subsampling sizes. In particular, the rule used in ASMOP\_2\% increases the sample size very aggressively, quickly reaching the full dataset. On the other hand, ASMOP\_+1 enlarges the sample very slowly. 

The strategy adopted in ASMOP\_2\%\_RELAX provides a more balanced behavior, allowing the algorithm to work with smaller subsamples for a longer portion of the iterations while still progressively increasing the accuracy of the function evaluations. As a consequence, ASMOP\_2\%\_RELAX achieves a favorable trade-off between computational cost and convergence speed, which motivated its selection  in the rest of the experiments. The choice of optimal hyper-parameters in an important question and it will be addressed in the future work. 

\section{Conclusion}
Stochastic trust region algorithm for unconstrained multi-objective finite sum problems has been  proposed. 
The method is featured by adaptive sample strategy which, depending on the problem, yields a mini-batch or increasing sample size scheduling.  The sample size is governed by the additional sampling approach, thus the ASMOP method can be viewed as a generalization of method proposed in \cite{NKNKJ} for single-objective problems. The adaptation to multi-objective setup required nontrivial modifications, including the additional sampling criterion. Theoretical analysis also required nontrivial adjustments  combining  additional sampling and  multi-objective analysis. We proved almost sure convergence towards Pareto critical points under assumptions that are standard for multi-objective and additional sampling framework, covering a large class of machine learning problems. Numerical study conducted on different machine learning models, covering both convex and nonconvex functions, showed the potential of the proposed method and its competitiveness with the relevant existing methods. We also presented a short analysis  of different parameters influencing the sample size dynamics which revealed a direction to follow in some future research.

 \section*{Acknowledgments} 

N. Krklec Jerinki\'c and L. Rute\v{s}i\'c were supported by the Science Fund of the Republic of Serbia, GRANT No 7359, Project LASCADO. The work of I. Trombini was supported by “Gruppo Nazionale per il Calcolo Scientifico (GNCS-INdAM)” (Progetti 2026).
\section*{Availability of Data and Materials} The datasets analyzed during the current study are available in links given
in the paper.
\section*{Conflict of interest} The authors have no relevant financial or non-financial interests to disclose.

\end{document}